\tikzset{
>=stealth',
 punktchain/.style={
  rectangle, 
   fill=cyan!40,
  draw=black, very thick,
  text width=12em, 
  minimum height=2em, 
  text centered, 
  on chain},
 line/.style={draw, thick, <-},
 element/.style={
  tape,
  top color=white,
  bottom color=blue!50!black!60!,
  minimum width=8em,
  draw=blue!40!black!90, very thick,
  text width=10em, 
  minimum height=2.5em, 
  text centered, 
  on chain},
 every join/.style={->, thick,shorten >=1pt},
 decoration={brace},
 tuborg/.style={decorate},
 tubnode/.style={midway, right=2pt},
}
\newtheorem{remark}{Remark}
\newtheorem{proposition}{Proposition}
\newtheorem{theorem}{Theorem}
\newtheorem{lemma}{Lemma}
\newcommand{\R}{\mathbb{R}}
\newcommand{\E}{\mathbb{E}}
\newcommand{\be}{\begin{equation}}
\newcommand{\ee}{\end{equation}}
\newcommand{\z}{\mathbf{z}}
\newcommand{\nt}{n_{\textrm{TOT}}}
\renewcommand{\epsilon}{\varepsilon}
\begin{document}
\title{Particle simulation methods for the Landau-Fokker-Planck equation with uncertain data}

\author[1]{Andrea Medaglia\thanks{\tt andrea.medaglia02@universitadipavia.it }}
\author[2]{Lorenzo Pareschi\thanks{\tt lorenzo.pareschi@unife.it}}
\author[1]{Mattia Zanella\thanks{\tt mattia.zanella@unipv.it}}
\affil[1]{Department of Mathematics ``F. Casorati", University of Pavia, Italy}
\affil[2]{Department of Mathematics and Computer Science, University of Ferrara, Italy}

\date{}

\maketitle

\abstract{The design of particle simulation methods for collisional plasma physics has always represented a challenge due to the unbounded total collisional cross section, which prevents a natural extension of the classical Direct Simulation Monte Carlo (DSMC) method devised for the Boltzmann equation. One way to overcome this problem is to consider the design of Monte Carlo algorithms that are robust in the so-called grazing collision limit. In the first part of this manuscript, we will focus on the construction of collision algorithms for the Landau-Fokker-Planck equation based on the grazing collision asymptotics and which avoids the use of iterative solvers. Subsequently, we discuss problems involving uncertainties and show how to develop a stochastic Galerkin projection of the particle dynamics which permits to recover spectral accuracy for smooth solutions in the random space. Several classical numerical tests are reported to validate the present approach. 
}
\\[+.2cm]
{\bf Keywords}: plasma physics, Landau-Fokker-Planck equation, particle methods, uncertainty quantification, stochastic Galerkin methods.

\tableofcontents

\section{Introduction}
The Landau-Fokker-Planck kinetic equation, also referred to as the Landau equation, holds significant importance as a fundamental tool for understanding the complex dynamics of charged particles within a collisional plasma. The equation plays a crucial role in elucidating the behaviour of particles that interact over long distances, primarily influenced by Coulomb forces. These long-range interactions give rise to small-angle collisions between the particles, leading to a breakdown in the finiteness of the classical Boltzmann collision operator^^>\cite{Landau1936,rosenbluth57,degond1992,bobylev_CMP2013}.

Due to its profound implications in the development of fusion reactors, the formulation of this model has had a far-reaching impact on both the theoretical and applied scientific communities. Researchers and engineers in the field have extensively utilized this formalized model to gain insights into the behaviour of charged particles in plasmas, paving the way for advancements in fusion reactor design and other related applications.

The construction of numerical methods for the Landau-Fokker-Planck equation has to deal with several challenges, among which are the high dimensionality of the problem, the structural properties, and the presence of multiple scales. As in most kinetic equations, the construction of numerical methods for plasma physics can be subdivided into two main categories, the first based on direct discretizations of the collisional PDEs, like spectral methods, see e.g.^^>\cite{filbet2002,pareschi2000,pareschi2003,crouseilles2010,DP14,DLPY15}, and the second based on the approximation of the underlying particles' dynamics. In this latter direction we mention particle-in-cell methods^^>\cite{birdsall85}, direct simulation Monte Carlo methods^^>\cite{bobylev2000,caflisch2008,caflisch2010,bobylev2013,RRCD,Trazzi05} and deterministic particle methods ^^>\cite{carrillo2020,CiCP-31-997}. It is worth to remark that Monte Carlo approaches based on the groundbreaking contribution of Bobylev and Nanbu^^>\cite{Nanbu98,bobylev2000} are consistent with the Landau equation thanks to their link with the Boltzmann equation when collisions become grazing. 

To enhance prediction accuracy, it is crucial to understand how sensitive physical models are to potential uncertainties in the constitutive parameters that characterize the system behaviour and influences of the different scales. Consequently, addressing the issue of quantifying these uncertainties becomes a major concern, both analytically and computationally. The development of numerical techniques must consider the added complexity arising from the increased dimensionality of the system due to the presence of random inputs in the model. In this direction, several research efforts focused on the design of efficient numerical techniques for uncertainty quantification (UQ), see^^>\cite{hu2015,xiu2002,despres2016,dimarco2019,pettersson15,poette2009,Pareschi2021,dimarco2020} and the references therein. We mention also recent results on UQ in plasma physics obtained in^^>\cite{chung2020,din2019,hu2018,xiao2021}.

Among the most popular UQ techniques, stochastic Galerkin (sG) methods based on generalized polynomial chaos expansions have shown the ability to achieve spectral accuracy in the random space for smooth solutions^^>\cite{xiu2010}. However, in contrast to stochastic sampling methods, these methods are highly intrusive and require to design new algorithms. Furthermore, a direct application of sG methods may lead to the loss of important structural properties, like conservation of physical quantities, nonnegativity of the solution, and hyperbolicity close to fluid regimes, see e.g.^^>\cite{dai2022,despres2013}. To overcome this issue, a different approach based on projecting the particle dynamics through generalized polynomial chaos was proposed in^^>\cite{carrillo19,carrillo19vietnam,medaglia2022,Pareschi2020}. Recently, the methodology has been extended to plasma simulations with BGK collisions in^^>\cite{medaglia2022JCP}. 

In this article, our focus lies on the more realistic scenario where plasma collisions are described by the Landau equation, taking into account uncertain quantities. Specifically, we concentrate on the space homogeneous case, where the collision algorithm follows the approach outlined in^^>\cite{bobylev2000,caflisch2010} whereas the uncertain velocity changes are approximated using an sG particle projection. \textcolor{black}{In this direction, we will construct a sG particle approximation which is consistent with the Landau equation in the grazing regime.}
To ensure spectral accuracy in the random space, our particle method is designed to leverage the general structure of collision kernels discussed in^^>\cite{bobylev2000}. To this aim, we introduce a \textcolor{black}{new regularised kernel, which guarantees that the scaled Boltzmann equation for Coulomb collisions is capable to efficiently approximate the Landau equation and that} offers several advantages, including the avoidance of iterative techniques commonly employed in other particle simulation methods^^>\cite{caflisch2010}. Being based on particle reconstruction, the method preserves the nonengativity of the solution and the main physical properties. \textcolor{black}{Furthermore, the resulting sG particle solver leads to a consistent approximation of the uncertain quantities linked to the solution of the Landau equation. }
To validate our method, we conduct several tests, these include classical benchmarks such as BKW, Trubnikov's solution for Coulombian particles, and the bump-on-tail problem. 

The rest of the manuscript is organized as follows. In Section \ref{sec:2} we recall the basic ideas in the design of collision algorithm for the Landau-Fokker-Planck equation inspired by the corresponding grazing limit of the Boltzmann equation. Section \ref{sec:3} is devoted to present the details of the DSMC approach with a regularized kernel and the corresponding sG projection. A suite of numerical examples is then reported in Section \ref{sec:4} which confirms the efficiency and the accuracy of the present approach. The last section collects some concluding remarks and future developments.

\section{Foundations of collision algorithms for the Landau equation}
\label{sec:2}
In this section, following^^>\cite{bobylev2000,caflisch2010}, we recall some results concerning the derivation of direct simulation collision algorithm for the Landau-Fokker-Planck equation based on the grazing collision limit of the Boltzmann equation. To this aim, we will make use of a first order approximation of the space homogeneous Boltzmann equation and devote particular attention to the Maxwellian and the Coulombian cases. 

We consider the space homogeneous Boltzmann equation^^>\cite{CIP94}
\begin{equation} \label{eq:boltzmann}
\dfrac{\partial f(v,t)}{\partial t} = Q(f,f)(v,t),
\end{equation}
describing the time evolution of the one-particle distribution function $f(v,t)$ with velocity $v\in \R^3$, at the time $t>0$. 
%
The collision term $Q(f,f)$ on the right-hand side of \eqref{eq:boltzmann} is a bilinear operator describing the binary interactions between particles and it is given by
\begin{equation}\label{eq:Q}
Q(f,f)(v,t) = \int_{\R^3} \int_{S^2} B\left( |q|, \frac{q\cdot n}{|q|} \right) \left( f(v',t)f(v_*',t)-f(v,t)f(v_*,t) \right) \, dn \, dv_*,
\end{equation}
where $q=v-v_*$ is the relative velocity and $n\in S^2$ is the unit vector normal to the unit sphere $S^2$ in $\R^3$. The binary interactions rules characterizing collisions between particles are
\[
\begin{split}
	v' &=\frac{1}{2}(v+v_*+|q|n) \\
	v'_*&=\frac{1}{2}(v+v_*-|q|n)
\end{split}
\]
and the collisional kernel $B(|q|,\cos\theta)$ reads
\[
B\left( |q|, \cos\theta \right) = |q| \, \sigma(|q|, \theta), \qquad \textrm{with} \qquad (0\leq \theta \leq \pi),
\]
where $\cos\theta=q\cdot n/|q|$ and $\sigma(|q|, \theta)$ is the so-called differential collision cross section at the scattering angle $\theta$, corresponding to the number of particles scattered per unit of incident flux, per unit of solid angle, in the unit time. We introduce also the total scattering cross section 
\[
\sigma_{tot}(|q|)=2\pi \, \int_{0}^{\pi} \sigma(|q|, \theta) \, \sin\theta \, d\theta,
\]
and the momentum-transfer (or -transport) scattering cross section defined as
\[
\sigma_{tr}(|q|)=2\pi \, \int_{0}^{\pi} \sigma(|q|, \theta)\,(1-\cos\theta) \, \sin\theta \, d\theta,
\]
that describes the average momentum transferred in the collisions. The differential cross section $\sigma(|q|, \theta)$ takes different forms depending on the interactions considered. Amongst the most relevant cases we mention the variable hard sphere (VHS) model and the Coulomb interactions. The VHS model is such that
\be\label{eq:scattering}
\sigma(q,\theta)=C_\gamma |q|^{\gamma-1}, \qquad \textrm{so that} \qquad B(|q|,\theta)=C_\gamma |q|^{\gamma},
\ee
where $C_\gamma>0$ is a constant, with $\gamma=0$ for Maxwell molecules and $\gamma=1$ for hard sphere model, which gives
\[
\sigma_{tot}(|q|)=\sigma_{tr}(|q|)=4\pi C_\gamma |q|^{\gamma-1}. 
\] 
On the other hand, particles subject to Coulomb forces are characterized, according to the Rutherford formula, by the scattering
\be \label{eq:coulomb}
\sigma(q,\theta)= \frac{b_0^2}{4\sin^4 (\theta/2)}, \qquad \textrm{with} \qquad b_0=\frac{e^2}{4\pi\epsilon_0 m_r |q|^2},
\ee
where $e$ is the charge of the particles, $\epsilon_0$ the vacuum permittivity and $m_r$ is the reduce mass, corresponding to $m/2$ for particles of the same species. In this case, it is necessary to introduce a cut-off, since for $\theta\to0$ the cross section is singular. Following the usual approximations justified by the shielding effect, we have
\[
\sigma_{tot}(|q|) = \pi \lambda^2_{d}, \qquad \textrm{where} \qquad \lambda^2_d = \frac{\epsilon_0 k T}{n e^2}
\]
is the Debye length and
\[
\sigma_{tr}(|q|) = 4\pi b^2_0 \log\Lambda, \qquad \textrm{where} \qquad \Lambda = \frac{1}{\sin(\theta^{min}/2)}.
\]

It is well-known that in the grazing collision limit, from equation \eqref{eq:boltzmann} with collisional operator \eqref{eq:Q}, we can recover the Landau-Fokker-Planck equation^^>\cite{Landau1936}
\begin{equation}\label{eq:LFP}
\begin{split}
	\dfrac{\partial f(v,t)}{\partial t} & = \, Q^L(f,f)(v,t) \\
	& = \frac{1}{8}\nabla_v \cdot \left[ \int_{\R^3} \Phi(v-v_*) \left( f(v_*)\nabla_vf(v) - f(v)\nabla_{v_*}f(v_*)\right) dv_* \right]
\end{split}
\end{equation}
where $\Phi$ is a $3 \times 3$ nonnegative symmetric matrix and its usual form is given by 
\[
\Phi(q) = |q|^{\gamma+2}S(q), \qquad S(q) = I - \dfrac{q \otimes q}{|q|^2}, 
\]
with $I$ identity matrix, and where $\gamma=0$ is the Maxwellian case and $\gamma = -3$ is the Coulombian case. In the following, we will discuss in more detail this aspect and derive first order approximation formulas which are suitable for the design of DSMC type algorithms. 


\subsection{First order approximation of the Boltzmann equation} \label{sec:2.1}
From a direct inspection of the collisional kernel in the presence of Coulomb forces, it is evident that when collisions become grazing, the interaction rate becomes infinite. To overcome this problem, which prevents the direct application of the DSMC algorithm, following^^>\cite{bobylev2000} we introduce a suitable first order approximation of the Boltzmann collision operator that permits to avoid such restrictions. In the sequel, we omit the explicit dependence on the time variable for compactness of notation.

First, we rewrite equation \eqref{eq:boltzmann} as
\[
\frac{\partial f}{\partial t} = \int_{\R^3} J F(U,q) dv_*,
\]
where $U=(v+v_*)/2$ is the velocity of the centre-of-mass, and
\[
F(U,q) = f(U+q/2) f(U-q/2) = f(v)f(v_*).
\]
The operator $J$ is defined such that its action on the angular variable $\omega=q/|q|$ is
\[
J F(U,|q|\omega) = \int_{S^2} B(|q|, \omega\cdot n) \left( F(U,|q|n) - F(U,|q|\omega) \right).
\]
Now we expand the operator $J$ at first order in \textcolor{black}{$\epsilon>0$} as
\[
J\approx\frac{1}{\epsilon} \left( \exp(\epsilon J) - \hat{I} \right),
\]
where $\hat{I}$ is the identity operator and $\epsilon$ is a small parameter. \textcolor{black}{We first observe that in the limit $\epsilon\to 0$ we recover the definition of $J$ and thus the original Boltzmann equation. Within this approximation, the collision term reads}
\begin{equation}\label{eq:boltzmann_epsi}
Q(f,f) \approx  \frac{1}{\epsilon} \int_{\R^3} \left( \exp(\epsilon J) - \hat{I} \right) F(U,q) dv_* = \frac{1}{\epsilon} \left( Q^{+}_{\epsilon}(f,f) - \rho f \right),
\end{equation}
where 
\[
Q^{+}_{\epsilon}(f,f) = \int_{\R^3} \exp(\epsilon J) f(v)f(v_*) dv_*
\]
\textcolor{black}{is the gain operator. To explicitly construct the operator $\exp(\epsilon J)$, it is sufficient to study the initial value problem for any test function $\psi(\omega,\epsilon)$
\[
\frac{\partial \psi}{\partial \epsilon}=J\psi,
\]	
for arbitrary initial conditions $\psi(\omega,0)=\psi_0(\omega),\,\omega\in S^2$ with $\psi_0(\omega) \in L^2(S^2)$. Following \cite{bobylev2000}, we can expand $\psi_0(\omega)$ into its spherical harmonics
\[
\psi_0(\omega) = \sum_{l=0}^{\infty} \sum_{m=-l}^{l} \psi^0_{lm} Y_{lm}(\omega)
\]
with
\[
\psi^0_{lm} = \int_{S^2} \psi_0(\omega) Y^*_{lm}(\omega) d\omega
\]
and $Y^*_{lm}(\omega)$ the complex conjugate of the spherical function $Y_{lm}(\omega)$. The action of $J$ on $Y_{lm}$ reads
\[
JY_{lm} = - \lambda_{l} Y_{lm}, \qquad \textrm{with} \qquad \lambda_{l} = 2 \pi \int_{-1}^1 B(\mu) (1-P_l(\mu)) d\mu
\]
with $P_l(\mu)$ Legendre polynomials of order $l$ and $\mu=\omega\cdot n=\cos\theta$. Therefore, the solution of the equation furnishes the action of the operator in integral form, since we have 
\[
\exp(\epsilon J)\psi_0(\omega) = \psi(\omega,\epsilon)= \int_{S^2} B_{\epsilon}(\mu, |q|) \psi(n) dn,
\]
}
with 
\[
B_{\epsilon}(\mu, |q|) = \sum_{l=0}^{+\infty} \frac{2l+1}{4\pi} \exp(-\lambda_l(|q|)\epsilon) P_l(\mu)
\]
the Green function, and 
\[
\lambda_l(|q|) = 2 \pi \int_{-1}^{1} \textcolor{black}{B_\epsilon(\mu, |q|)}(1-P_l(\mu)) d\mu.
\]
\textcolor{black}{With the above representation of the operator $\exp(\epsilon J)$}, the approximated gain operator explicitly reads
\[
Q^{+}_{\epsilon}(f,f)=\int_{\R^3}\int_{S^2} B_{\epsilon}(\omega\cdot n, |q|) f(v') f(v'_*) dv_*.
\]

\textcolor{black}{Since we are interested in the behaviour of the equation in the grazing collision limit, i.e. when the differential scattering cross section $\sigma(|q|,\theta)$ is concentrated at a small angle $\theta\approx 0$, we assume that $\mu$ in $B_{\epsilon}(\mu, |q|)$ is concentrated near the value $\mu=1$. In this regime, it is well known that the Boltzmann equation converges to the Landau equation.}
We have
\[
\lambda_l(|q|) \simeq 2\pi\int_{-1}^{+1} B_{\epsilon}(\mu, |q|) \left(1-P_l(\mu)+(1-\mu)P'_l(1)\right)d\mu=\pi l(l+1) \int_{-1}^{+1} B_{\epsilon}(\mu, |q|) (1-\mu) d\mu,
\]
where $P'_l(1)=l(l+1)/2$. \textcolor{black}{With the above formula, the Green function $B_{\epsilon}(\mu, |q|)$ can be further approximate as $D(\mu, |q|)$}
\[
B_{\epsilon}(\mu, |q|) \simeq D(\mu, |q|) = \sum_{l=0}^{+\infty} \frac{2l+1}{4\pi} P_l(\mu) \exp\left(-\frac{l(l+1)}{2}|q|\sigma_{m}(|q|)\epsilon\right).
\]
Finally, one can show that equation \eqref{eq:boltzmann_epsi} with the above collisional kernel approximates the Landau-Fokker-Planck equation for small values of $\varepsilon$. \textcolor{black}{We define $\tau$ such that}
\be \label{eq:oneovertau}
\textcolor{black}{\frac{1}{\tau} = \rho |q| \sigma_{tr}(|q|)}
\ee
and the approximated Green function with the cut-off of the scattering angle as
\begin{equation}\label{eq:D}
	D\left(\mu,\tau^{(\gamma)}_0\right)=\sum_{l=0}^{+\infty} \frac{2l+1}{4\pi} P_l(\mu) \exp\left(-l(l+1)\tau^{(\gamma)}_0\right), \qquad \tau^{(\gamma)}_0=\frac{\epsilon}{2\rho\tau}=\frac{\epsilon}{2}|q| \sigma_{tr}(|q|),
\end{equation}
\textcolor{black}{where the superscript $(\gamma)$ into the definition of $\tau^{(\gamma)}_0$ refers to the type of particle interaction, which changes the form of $\sigma_{tr}(|q|)$.}
We have the first order approximation 
\begin{equation}\label{eq:boltzmann_approx1}
\frac{\partial f}{\partial t} = \frac{1}{\epsilon} \left[\int_{\R^3}\int_{S^2}D\left(\mu,\tau^{(\gamma)}_0\right) f(v')f(v'_*)dndv_* \, - \, \rho f(v)\right], 
\end{equation}
where the first term on the  right-hand side of the previous equation plays the role of the gain operator with the approximated Green function, \textcolor{black}{and the second one is the loss term.  We remark, however, that at this point no assumptions were made on the type of collision. Indeed, the derivation of the approximated Boltzmann equation \eqref{eq:boltzmann_approx1} does not depend on the explicit form of $\tau^{(\gamma)}_0$. Thus, it holds for both the VHS and Coulomb models introduced in Section \ref{sec:2}.}

\textcolor{black}{In the numerical part, Section \ref{sec:4}, we will consider the Maxwell ($\gamma=0$) and Coulomb ($\gamma=-3$) cases. In these scenarios, the explicit form of $\tau^{(\gamma)}_0$ in a single species plasma model reads as follows: if $\gamma=0$, we have
\[
\tau^{(0)}_0 = 4\pi C_0 \frac{\epsilon}{2},
\]
that is independent from the relative velocity, if $\gamma=-3$, we recover
\be \label{eq:tau0_c}
\tau^{(-3)}_0 = 4\pi \left( \frac{e^2}{4\pi\epsilon_0 m_r} \right)^2 \frac{ \log \Lambda}{|q|^3} \frac{\epsilon}{2}.
\ee
With a slight abuse of notation, we will refer to $\tau^{(\gamma)}_0$ simply as $\tau_0$, whenever the differences in the type of interaction do not change the general construction.}

\subsection{Approximated scattering kernels} \label{sec:2.2}
In order to solve numerically equation \eqref{eq:boltzmann_approx1}, we need to sample the probability density $D(\mu,\tau_0)$ defined in \eqref{eq:D}. To this end, a random sample of the quantity $\mu(\epsilon)$ in $[-1,1]$, at each time step, is required. It is easily observed that this can be very heavy from a numerical perspective. 

However, as pointed out in^^>\cite{bobylev2000}, we may consider simpler collisional kernels $D_*(\mu,\tau_0)$ with the following properties
\begin{enumerate}
	\item \label{en:D1} $D_*(\mu,\tau_0)\geq0$, and $2\pi\displaystyle\int_{-1}^{+1}D_*(\mu,\tau_0)d\mu=1$
	\item \label{en:D2} $\lim_{\tau_0\to0}D_*(\mu,\tau_0)=\dfrac{1}{2\pi}\delta(1-\mu)$ 
	\item \label{en:D3} $\lim_{\tau_0\to0}\frac{2\pi}{\tau_0}\displaystyle\int_{-1}^{+1}(D_*(\mu,\tau_0)-D(\mu,\tau_0))P_l(\mu) \textcolor{black}{d\mu}=0$ \textcolor{black}{for any $l=1,2,\dots$}. 
\end{enumerate}
A general form of such a kernel is furnished by the following result.
\begin{lemma}\label{lemma:1}
Conditions \ref{en:D1}-\ref{en:D3} are satisfied for any function of the type
\[
D_*(\mu,\tau_0) = \psi\left(\frac{1-\mu}{2\tau_0}\right) \left[ 4\pi\tau_0 \int_{0}^{1/\tau_0}\psi(x)dx \right]^{-1},
\]
where $\psi(x)\geq0$ for any $x>0$ and
\begin{itemize}
	\item $\displaystyle\int_{0}^{+\infty}\psi(x)dx=\int_{0}^{+\infty}x\psi(x)dx<\infty$;
	\item $\lim_{\tau_0\to0}\tau_0\displaystyle\int_{1/\tau_0}^{+\infty}x^n \psi(x) dx = 0$ for any $n=2,3,\dots$.
\end{itemize}
\end{lemma}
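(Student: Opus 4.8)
The plan is to verify the three defining conditions \ref{en:D1}--\ref{en:D3} directly by a change of variables $x = (1-\mu)/(2\tau_0)$, which maps $\mu \in [-1,1]$ to $x \in [0,1/\tau_0]$ with $d\mu = -2\tau_0\, dx$. This substitution is the workhorse of the entire argument: it turns every integral against $D_*$ into an integral of $\psi$ over $[0,1/\tau_0]$, at which point the two hypotheses on $\psi$ (equal finite zeroth and first moments, and the vanishing of the tail moments) can be applied.

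First I would check condition \ref{en:D1}. Nonnegativity is immediate from $\psi \geq 0$ and $\tau_0 > 0$. For the normalisation, apply the change of variables to $2\pi\int_{-1}^{1} D_*(\mu,\tau_0)\,d\mu$: the factor $[4\pi\tau_0\int_0^{1/\tau_0}\psi(x)\,dx]^{-1}$ comes out, the $d\mu = -2\tau_0\,dx$ supplies a $2\tau_0$, and the surviving integral $\int_0^{1/\tau_0}\psi(x)\,dx$ cancels against the denominator, leaving exactly $1$. This step is purely bookkeeping.

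Next I would treat condition \ref{en:D2}. As $\tau_0 \to 0$ the normalising denominator $4\pi\tau_0\int_0^{1/\tau_0}\psi(x)\,dx$ behaves like $4\pi\tau_0 \cdot \int_0^\infty \psi(x)\,dx$ (using the first hypothesis, which guarantees the full integral is finite), so $D_*$ concentrates near $\mu = 1$; testing against a smooth $\varphi(\mu)$ and again substituting $x=(1-\mu)/(2\tau_0)$ shows $2\pi\int_{-1}^1 D_*(\mu,\tau_0)\varphi(\mu)\,d\mu \to \varphi(1)$, which is the statement that $D_* \rightharpoonup \frac{1}{2\pi}\delta(1-\mu)$. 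One should be slightly careful that the contribution of the tail $x > 1/\tau_0$ is negligible — but that region is simply excluded by the upper limit $1/\tau_0$ built into $D_*$, so no extra estimate is needed here.

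The real work is condition \ref{en:D3}, and this is where I expect the main obstacle. Writing $1 - P_l(\mu)$ and Taylor-expanding $P_l$ around $\mu = 1$ using $P_l(1) = 1$ and $P_l'(1) = l(l+1)/2$, one gets $1 - P_l(\mu) = \tfrac{l(l+1)}{2}(1-\mu) + O((1-\mu)^2)$, i.e. in the $x$ variable $1 - P_l(\mu) = l(l+1)\tau_0 x + O(\tau_0^2 x^2)$. Substituting the same expansion into $D(\mu,\tau_0)$ from \eqref{eq:D} and doing the $x$-substitution on both $D_*$ and $D$, the leading terms in $\frac{2\pi}{\tau_0}\int_{-1}^{1}(D_* - D)P_l\,d\mu$ are matched precisely because $\int_0^\infty \psi(x)\,dx = \int_0^\infty x\psi(x)\,dx$ forces the zeroth- and first-moment contributions of $D_*$ to agree with those coming from $D$ in the $\tau_0\to 0$ limit; the analogous matching for $D$ is exactly the content of the grazing-limit computation recalled in Section \ref{sec:2.1} (the $\lambda_l \simeq \pi l(l+1)\int B_\epsilon(\mu)(1-\mu)\,d\mu$ step). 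What remains is to show the error terms vanish: the $O(\tau_0^2 x^2)$ correction to $P_l$, multiplied by $\frac{1}{\tau_0}$ and integrated against $\psi$ over $[0,1/\tau_0]$, is controlled by $\tau_0 \int_0^{1/\tau_0} x^2 \psi(x)\,dx$, which tends to $0$ — here the subtle point is that $\int_0^\infty x^2\psi(x)\,dx$ need not be finite, so one genuinely needs the \emph{second} hypothesis $\lim_{\tau_0\to 0}\tau_0\int_{1/\tau_0}^{\infty} x^n\psi(x)\,dx = 0$ to split the integral at $1/\tau_0$ and dispose of both pieces; higher-order terms in the Legendre expansion generate the $n = 3, 4, \dots$ tail-moment conditions in the same way. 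Keeping track of which error is killed by the moment-equality hypothesis and which by the tail-decay hypothesis, uniformly in the relevant range of $l$, is the delicate part of the proof; the rest is the change of variables carried out carefully three times.
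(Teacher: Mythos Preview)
The paper does not actually prove this lemma: its entire proof environment reads ``For the proof, we refer to \cite{bobylev2000}.'' Your sketch therefore goes well beyond what the manuscript provides, and the change of variables $x=(1-\mu)/(2\tau_0)$ together with the Taylor expansion of $P_l$ about $\mu=1$ is indeed the correct route; it handles conditions \ref{en:D1}--\ref{en:D3} in essentially the way you describe, and it is also the approach taken in the reference the paper cites.

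One minor inaccuracy in your treatment of condition \ref{en:D3}: you claim the tail hypothesis $\lim_{\tau_0\to0}\tau_0\int_{1/\tau_0}^{\infty} x^n\psi\,dx=0$ is ``genuinely needed'' to control the higher-order remainder $\tau_0^{k-1}\int_0^{1/\tau_0} x^k\psi(x)\,dx$ by ``splitting the integral at $1/\tau_0$ and disposing of both pieces.'' But that integral is \emph{already} supported on $[0,1/\tau_0]$ --- there is no tail piece to split off. On that range $(\tau_0 x)^{k-1}\le1$, hence $\tau_0^{k-1}x^k\le x$, and the finiteness of $\int_0^{\infty} x\psi(x)\,dx$ from the first bullet alone (split at a fixed $A$, then let $A\to\infty$) already yields $\tau_0^{k-1}\int_0^{1/\tau_0}x^k\psi\,dx\to0$ for every $k\ge2$. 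So the second bullet of the hypotheses is not doing the work you attribute to it here; your argument still succeeds, it simply misidentifies which assumption carries which step. (Uniformity in $l$ is also not required: condition \ref{en:D3} is stated for each fixed $l$.)
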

\begin{proof}
For the proof, we refer to^^>\cite{bobylev2000}.
\end{proof}

In the following, we consider three different kernels. The first one has been proposed in^^>\cite{caflisch2010,nanbu1997} and is commonly used in plasma physics simulation, and it reads
\begin{equation}
\label{eq:D1}
	D^{(1)}_*(\mu,\tau_0) = \frac{A}{4\pi \sinh A} \exp(\mu A)
\end{equation} 
with
\begin{equation} \label{eq:nonlineq}
\coth A - \dfrac{1}{A} = e^{-2\tau_0}.
\end{equation}
As a consequence, to sample $D^{(1)}_*(\mu,\tau_0)$ in the spherical coordinate system $(\theta, \phi)$, we need to solve at every time step the nonlinear equation \eqref{eq:nonlineq} and then compute the scattering angle $\cos\theta$ as
\begin{equation} \label{eq:costheta1}
\cos\theta = \dfrac{1}{A} \ln\left(\exp^{-A}+2r_1\sinh A\right),
\end{equation}
and $\phi$ as 
\begin{equation} \label{eq:phi1}
\phi = 2\pi r_2,
\end{equation}
where $r_1, r_2$ are two uniform random numbers in $(0,1)$.

The second kernel has been proposed in^^>\cite{bobylev2000} as a simplification of $D^{(1)}_*(\mu,\tau_0)$ satisfying Lemma \ref{lemma:1}
\begin{equation}
\label{eq:D2}
	D^{(2)}_*(\mu,\tau_0) = \frac{1}{2\pi} \delta\left(\mu - \nu(\tau_0)\right),
\end{equation}
where 
\be \label{eq:nu}
	\nu(\tau_0) =
	\begin{cases}
		1-2\tau_0 & \text{if } 0 \leq \tau_0 \leq 1 \\
		-1     & \text{if } \tau_0 > 1.
	\end{cases}
\ee
Sampling $D^{(2)}_*(\mu,\tau_0)$ corresponds, in the spherical coordinate, to compute $\phi$ as in \eqref{eq:phi1} and to fix
\begin{equation} \label{eq:costheta2}
\cos \theta = \nu(\tau_0).
\end{equation}
Note that in this case we need only one random number, i.e. $r_2$, and no iterations are required.

A third approximation consistent with Lemma \ref{lemma:1} will be considered in the following and it consists in a regularization of the previous kernel $D^{(2)}_*(\mu,\tau_0)$ defined as
\begin{equation}
\label{eq:D3}
	D^{(3)}_*(\mu,\tau_0) = \frac{1}{2\pi} \delta\left(\mu - \tilde{\nu}(\tau_0)\right),
\end{equation}
with 
\be \label{eq:nutilde}
\tilde{\nu}(\tau_0) = 1 - 2 \tanh \tau_0.
\ee
We highlight that \textcolor{black}{$\cos \theta$} is obtained by fixing 
\begin{equation} \label{eq:costheta3}
	\cos \theta = \tilde{\nu}(\tau_0)
\end{equation}
whereas $\phi$ is computed as in \eqref{eq:phi1}. 
\textcolor{black}{
\begin{proposition}
The kernel $D^{(3)}_*(\mu,\tau_0)$ defined in \eqref{eq:D3} satisfies conditions \ref{en:D1}-\ref{en:D2}-\ref{en:D3}.
\end{proposition}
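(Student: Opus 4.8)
The plan is to verify the three conditions in turn, the first two being essentially immediate and the third reducing, after a projection onto Legendre polynomials, to a one-line Taylor expansion. First I would record that the auxiliary function $\tilde{\nu}(\tau_0)=1-2\tanh\tau_0$ maps $[0,+\infty)$ into $(-1,1]$, since $\tanh\tau_0\in[0,1)$ for $\tau_0\geq 0$. Consequently, for every $\tau_0>0$ the Dirac mass in \eqref{eq:D3} is supported in the open interval $(-1,1)$, so $D^{(3)}_*(\mu,\tau_0)\geq 0$ and $2\pi\int_{-1}^{+1}D^{(3)}_*(\mu,\tau_0)\,d\mu = 2\pi\cdot\frac{1}{2\pi}=1$, which is condition \ref{en:D1}. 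For condition \ref{en:D2}, one only needs $\tilde{\nu}(\tau_0)\to 1$ as $\tau_0\to 0^+$, from which $D^{(3)}_*(\cdot,\tau_0)=\frac{1}{2\pi}\delta(\,\cdot\,-\tilde{\nu}(\tau_0))\to\frac{1}{2\pi}\delta(1-\mu)$ in the sense of distributions (tested against continuous functions on $[-1,1]$).

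For condition \ref{en:D3} I would pass to Legendre moments. By the sifting property of the Dirac delta, $2\pi\int_{-1}^{+1}D^{(3)}_*(\mu,\tau_0)P_l(\mu)\,d\mu = P_l(\tilde{\nu}(\tau_0))$, while inserting the series \eqref{eq:D} and using the orthogonality relation $\int_{-1}^{+1}P_k(\mu)P_l(\mu)\,d\mu=\frac{2}{2l+1}\delta_{kl}$ gives $2\pi\int_{-1}^{+1}D(\mu,\tau_0)P_l(\mu)\,d\mu = \exp(-l(l+1)\tau_0)$. Hence condition \ref{en:D3} is equivalent to
\[
\lim_{\tau_0\to 0}\frac{1}{\tau_0}\Big(P_l\big(\tilde{\nu}(\tau_0)\big)-\exp\big(-l(l+1)\tau_0\big)\Big)=0 \qquad \text{for every fixed } l\geq 1.
\]

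The final step is the expansion near $\tau_0=0$. Since $\tanh\tau_0=\tau_0+O(\tau_0^3)$ we have $\tilde{\nu}(\tau_0)=1-2\tau_0+O(\tau_0^3)$, and because $P_l$ is a polynomial with $P_l(1)=1$ and $P_l'(1)=l(l+1)/2$, one obtains $P_l(\tilde{\nu}(\tau_0))=1-l(l+1)\tau_0+O(\tau_0^2)=\exp(-l(l+1)\tau_0)+O(\tau_0^2)$; dividing the $O(\tau_0^2)$ remainder by $\tau_0$ then yields the claimed limit. I do not expect a genuine obstacle here: the only point that requires care is the cancellation of the $O(\tau_0)$ terms, which is exactly why the regularisation is chosen so that $\tilde{\nu}'(0)=-2$, i.e.\ so that $\tilde{\nu}(\tau_0)$ matches the kernel location $\nu(\tau_0)=1-2\tau_0$ of $D^{(2)}_*$ up to $O(\tau_0^3)$.

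As a cross-check I could avoid recomputing the moments of $D$ altogether and instead invoke the fact, recalled after \eqref{eq:D2}, that $D^{(2)}_*$ already satisfies \ref{en:D1}--\ref{en:D3}. Writing $P_l(\tilde{\nu}(\tau_0))-\exp(-l(l+1)\tau_0)=\big(P_l(\tilde{\nu}(\tau_0))-P_l(\nu(\tau_0))\big)+\big(P_l(\nu(\tau_0))-\exp(-l(l+1)\tau_0)\big)$, the second bracket is $o(\tau_0)$ by condition \ref{en:D3} for $D^{(2)}_*$, while the mean value theorem gives $P_l(\tilde{\nu}(\tau_0))-P_l(\nu(\tau_0))=P_l'(\xi)\big(\tilde{\nu}(\tau_0)-\nu(\tau_0)\big)=O(\tau_0^3)$ since $\tilde{\nu}(\tau_0)-\nu(\tau_0)=O(\tau_0^3)$; dividing by $\tau_0$ transfers condition \ref{en:D3} from $D^{(2)}_*$ to $D^{(3)}_*$.
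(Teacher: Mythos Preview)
Your proof is correct and follows essentially the same route as the paper: both reduce condition~\ref{en:D3} to checking that $\big(1-P_l(\tilde{\nu}(\tau_0))\big)/\tau_0\to l(l+1)$, which is precisely your Taylor computation using $\tilde{\nu}'(0)=-2$ and $P_l'(1)=l(l+1)/2$. The only cosmetic difference is that the paper invokes the Bobylev--Nanbu reformulation of condition~\ref{en:D3} rather than computing the Legendre moments of $D$ directly as you do; your argument is in fact more self-contained, and the additional cross-check via $D^{(2)}_*$ is a nice redundancy the paper does not include.
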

\begin{proof}
Condition \ref{en:D1} is satisfied because $\tilde{\nu}(\tau_0) = 1 - 2 \tanh \tau_0 \geq 0$ for every $\tau_0\geq 0$, and consequently 
\[
D^{(3)}_*(\mu,\tau_0) = \frac{1}{2\pi} \delta\left(\mu - \tilde{\nu}(\tau_0)\right) \geq 0, \qquad \textrm{for every} \qquad \mu\in[-1,1], \, \tau_0\geq 0.
\]
Moreover, by the definition of Dirac delta we have the normalization
\[
2\pi\displaystyle\int_{-1}^{+1}D^{(3)}_*(\mu,\tau_0)d\mu=2\pi\displaystyle\int_{-1}^{+1}\frac{1}{2\pi} \delta\left(\mu - \tilde{\nu}(\tau_0)\right)d\mu=1.
\]
Condition \ref{en:D2} follows from the fact that $\tanh\tau_0\to0$ for $\tau_0\to0$ and thus
\[
\lim_{\tau_0\to0}\tilde{\nu}(\tau_0) = 1
\]
from which we have
\[
\lim_{\tau_0\to0} D^{(3)}_*(\mu,\tau_0) = \frac{1}{2\pi} \delta\left(\mu - 1 \right).
\]
To prove the last Condition, we recall that \ref{en:D3} can be rewritten
\[
\lim_{\tau_0\to0} \frac{2\pi}{\tau_0} \int_{-1}^{1} D_*(\mu,\tau_0)\left(1-P_l(\mu)\right) d\mu = l(l+1), \qquad \textrm{for every} \qquad l=1,2,\dots
\]
as observed by Bobylev and Nanbu in \cite{bobylev2000}. Then, substituting $D^{(3)}_*(\mu,\tau_0)$ in the previous relation and solving the integral, we end up with
\[
\lim_{\tau_0\to0} \frac{1-P_l(\tilde{\nu}(\tau_0))}{\tau_0} = l(l+1).
\]
\end{proof}
\begin{remark}
As long as a kernel $D^{(i)}_*$ meets the conditions \ref{en:D1}-\ref{en:D2}-\ref{en:D3} or the equivalent Lemma \ref{lemma:1}, the approximated Boltzmann equation \eqref{eq:boltzmann_approx1} with such $D^{(i)}_*$ is a first order surrogate model for the Landau equation. The differences between the three proposed kernel with $i=1,2,3$ will be investigated in the following sections.
\end{remark}
}

\section{Particle methods and their sG projection} \label{sec:3}
We introduce a random vector $\z=(z_1,\dots,z_d)\in\R^{d_{\z}}$ collecting all the uncertainties of the system and for which it is known its distribution $p(\z)$
\[
\mathrm{Prob}(\z\in \Omega) = \int_{I_{\z}} p(\z) d\z,
\]
for any $I_{\z}\in\R^{d_{\z}}$. The vector $\z$ characterizes the uncertainties of the system due, e.g., to missing information on the initial state, measurement of the model parameters and, boundary conditions^^>\cite{medaglia2022JCP,zhu2017}. 

We are interested in the evolution of the density $f(v,t,\z)$, $v \in \mathbb R^3$, $\z \in \mathbb R^{d_{\z}}$, $t\ge0$, solution to the Landau-Fokker-Planck equation \eqref{eq:LFP} with uncertainties
\begin{equation}
\label{eq:Landau}
\partial_t f(v,t,\z) =\dfrac{1}{8} \nabla_v \cdot \left[ \int_{\mathbb R^3} \Phi(v-v_*,\z) (f(v_*,\z)\nabla_v f(v,\z)-f(v,\z)\nabla_{v_*}f(v_{*},\z))dv_* \right]
\end{equation}
complemented with the initial condition $f(v,0,\z) = f_0(v,\z) \in \mathbb R^{3} \times \mathbb R^{d_{\z}}$. In particular, in \eqref{eq:Landau} the interaction matrix $\Phi(\cdot,\z)$ may encode different types of interactions. 

In this section, we first recall the basic ingredients of DSMC particle method in the absence of uncertainties, as proposed in^^>\cite{caflisch2010}. 

Once established the DSMC solver of interest we seek to derive the related sG particle approach. These methods have been previously studied for mean-field model for emerging phenomena^^>\cite{carrillo19,carrillo19vietnam,medaglia2022}, and subsequently extended to the space-homogeneous Boltzmann equation in^^>\cite{Pareschi2020} and to a non-homogeneous plasma dynamics with BGK collision operator in^^>\cite{medaglia2022JCP}. 

\subsection{DSMC methods}
Let us now give the details of the DSMC algorithms based on the previous first order approximation of the Boltzmann equation in the grazing collision limit and in absence of uncertainties (see^^>\cite{caflisch2010,pareschirusso,nanbu1997,bobylev2000,yang2023adjoint} and the references therein).

\subsubsection{Nanbu-Babovsky scheme}
We first present the Nanbu-Babovsky scheme for the Boltzmann equation \eqref{eq:boltzmann_approx1}. We introduce a time discretization $t^n = n\Delta t$, $\Delta t>0$ and we indicate with $f^n(v)$ the approximation of $f(v,n\Delta t)$. Applying a forward discretization to \eqref{eq:boltzmann_approx1} we get
\begin{equation*}
	f^{n+1} = \left( 1 - \frac{\rho\Delta t}{\epsilon} \right) f^n + \frac{\rho\Delta t}{\epsilon} P_{*,\epsilon}^{(i)}(f,f)
\end{equation*}
where
\[
P_{*,\epsilon}^{(i)}(f,f) = \int_{\R^3}\int_{S^2}D_*^{(i)}\left(\mu,\tau_0\right) f(v')f(v'_*)dndv_*
\]
where we substituted $D\left(\mu,\tau_0\right)$ by any of the previous approximations $D^{(i)}_*\left(\mu,\tau_0\right)$, $i=1,2,3$ defined in \eqref{eq:D1}-\eqref{eq:D2}-\eqref{eq:D3}. 

Hence, indicating with $q^n=v^n_i-v^n_j$ the relative velocity at the time step $n$ of any two particles $v^n_i,v^n_j$, the collision law reads
\begin{equation} \label{eq:collisions}
\begin{split}
	& v'_i = v^n_i - \dfrac{1}{2} \left( q^n ( 1 - \cos\theta) + h^n \sin\theta\right) \\
	& v'_j = v^n_j + \dfrac{1}{2} \left( q^n ( 1 - \cos\theta) + h^n \sin\theta\right)
\end{split}
\end{equation}
with $h^n$ given by
\[
\begin{split}
	& h^n_x = q^n_\perp \cos \phi\\
	& h^n_y = -\left( q^n_y q^n_x \cos\phi + q^n q^n_z \sin\phi \right) / q^n_\perp\\
	& h^n_z = -\left( q^n_z q^n_x \cos\phi - q^n q^n_y \sin\phi \right) / q^n_\perp,
\end{split}
\]
where $q^n_\perp=\left( (q^n_y)^2 + (q^n_z)^2 \right)^{1/2}$. In the above formulas, the angles $\theta$ and $\phi$ are given by \eqref{eq:costheta1}-\eqref{eq:phi1}, \eqref{eq:costheta2}-\eqref{eq:phi1}, or \eqref{eq:costheta3}-\eqref{eq:phi1} according to the approximated kernel we consider.
\begin{figure}[htb]
 \centering
 \begin{minipage}{.9\linewidth} 
\begin{algorithm}[H] 
\footnotesize
\caption{\small{Nanbu-Babovsky for the Landau equation} } \label{NB_det} 
	\begin{itemize}
	\item Compute the initial velocity of the particles $\{v^0_i\}_{i=1}^N$ by sampling from the initial distribution $f^0(v)=f(v,t=0)$;
	\item for $n=1$ to $\nt$, given $\{v^n_i\}_{i=1}^N$:
	\begin{itemize}
		\item set $N_c=\textrm{Sround}(\rho N\Delta t / 2 \epsilon)$;
		\item select the interaction pairs $(i,j)$ uniformly among all the possible ones, and for every pair $(v^n_i,v^n_j)$:
		\begin{itemize}
			\item compute the cumulative scattering angle $\cos \theta$ and the angle $\phi$ according to \eqref{eq:costheta1}-\eqref{eq:phi1} (for $D^{(1)}_*(\mu,\tau_0)$), \eqref{eq:phi1}-\eqref{eq:costheta2} (for $D^{(2)}_*(\mu,\tau_0)$), or \eqref{eq:phi1}-\eqref{eq:costheta3} (for $D^{(3)}_*(\mu,\tau_0)$);
			\item perform the collision according to \eqref{eq:collisions};
			\item set $v^{n+1}_i=v'_i$ and $v^{n+1}_j=v'_j$;
		\end{itemize}
		\item set $v^{n+1}_i=v^n_i$ for all the particles that have not been collided; 						
	\end{itemize}
	\item end for.
	\end{itemize}
\end{algorithm}
\end{minipage}
\end{figure}

We have denoted by $\textrm{Sround}(x)$ the stochastic rounding of a positive real number $x$
\[
\textrm{Sround} = 
\begin{cases}
	\lfloor x \rfloor + 1 & \textrm{with\, probability}\;x - \lfloor x \rfloor\\
	\lfloor x \rfloor    & \textrm{with\, probability}\;1-x + \lfloor x \rfloor, 
\end{cases}
\]
where $\lfloor x \rfloor $ denotes the integer part of $x$. 
\textcolor{black}{
\begin{remark}\label{remark_DSMC}
In standard DSMC algorithms, the presence of a collisional kernel, representing the forces between molecules or the frequency of interaction, is reflected in the number of colliding particles. In particular, for non-Maxwellian models an acceptance-rejection procedure is performed according to the kernel \cite{pareschirusso}. With the reformulation presented in this work, the physics of the interactions is encoded into the sampling of the angles, which depends on $\tau_0$ defined in \eqref{eq:D}. Thus, Algorithm \ref{NB_det} reads as a standard Nanbu-Babovsky scheme for Maxwell particles with collisions defined by \eqref{eq:collisions} and angles $(\theta,\phi)$ given by \eqref{eq:costheta1}-\eqref{eq:phi1}, \eqref{eq:costheta2}-\eqref{eq:phi1}, or \eqref{eq:costheta3}-\eqref{eq:phi1}, according to $D^{(i)}_*$, but it holds for both the VHS and Coulomb scenario.
\end{remark}
\begin{remark}\label{remark_D*}
The choice of the kernel $D^{(i)}_*$, with $i=1,2,3$, is crucial from a numerical perspective. In particular, to sample $\cos\theta$ and $\phi$ from $D^{(1)}_*$, we highlight that the nonlinear equation \eqref{eq:nonlineq} must be solved for every selected interaction pairs of particles, at every time step. This introduces additional numerical errors and may be computationally demanding. On the contrary, $D^{(2)}_*$ and $D^{(3)}_*$ do not require the use of a solver for the zeros of the nonlinear equation, and thus the code may be easily vectorizable with a significant speed-up in the computational time.
\end{remark}
}
\subsubsection{Bird's scheme}
We can also construct a Monte Carlo scheme based on the classical Bird's no time counter method. In this case, the average number of collisions at each time step is given by
\[
N_c = \frac{N\rho\Delta t}{2\epsilon},
\]
so that the average time between two collisions is 
\[
\Delta t_c = \frac{\Delta t}{N_c} = \frac{2\epsilon}{\rho N}.
\]
Note that, in our setting, Bird's method has the same structure of the Nanbu-Babovsky method where at each $\Delta t_c$ only one pair collides, since
\[
f^{n+1} = \left( 1 - \frac{\rho\Delta t_c}{\epsilon} \right) f^n + \frac{\rho\Delta t_c}{\epsilon} P^{(i)}_{*,\epsilon}(f,f)=\left( 1 - \frac{2}{N} \right) f^n + \frac{2}{N} P^{(i)}_{*,\epsilon}(f,f).
\]
As a consequence, recollision between particles are admissed in a time step $\Delta t=N\Delta t_c$ in contrast to Nanbu-Babovsky. 
In Algorithm \ref{B_det} we recall the standard Bird's scheme for Maxwell particles with collisions defined by \eqref{eq:collisions} in the absence of uncertainties. \textcolor{black}{The considerations of Remark \ref{remark_DSMC}-\ref{remark_D*} apply also to the Bird scheme.}

\begin{figure}[htb] 
\centering
\begin{minipage}{.9\linewidth}
\begin{algorithm}[H] 
\footnotesize
\caption{\small{Bird for the Landau equation} } \label{B_det} 
\begin{itemize}
	\item Compute the initial velocity of the particles $\{v^0_i\}_{i=1}^N$ by sampling from the initial distribution $f^0(v)=f(v,t=0)$;
	\item set the time counter $t_c=0$;
	\item set $\Delta t_c=2\epsilon/\rho N$;
	\item for $n=1$ to $\nt$, given $\{v^n_i\}_{i=1}^N$:
	\begin{itemize}
		\item since $t_c < n\Delta t$:
		\begin{itemize}
			\item select a random pair $(i,j)$ uniformly among all the possible ones;
			\item compute the cumulative scattering angle $\cos \theta$ and the angle $\phi$ according to \eqref{eq:costheta1}-\eqref{eq:phi1} (for $D^{(1)}_*(\mu,\tau_0)$), \eqref{eq:costheta2}-\eqref{eq:phi1} (for $D^{(2)}_*(\mu,\tau_0)$), or \eqref{eq:costheta3}-\eqref{eq:phi1} (for $D^{(3)}_*(\mu,\tau_0)$);
			\item perform the collision according to \eqref{eq:collisions};
			\item set $w_i=v'_i$ and $w_j=v'_j$;
			\item update the time counter $t_c=t_c+\Delta t_c$;		
		\end{itemize}
		\item set $v^{n+1}_i=w_i$ for $i=1,\dots,N$;			
	\end{itemize}
	\item end for.
\end{itemize}
\end{algorithm}
\end{minipage}
\end{figure}

\subsection{Stochastic Galerkin reformulation of the DSMC method}
We consider a sample of $N$ particles $\textcolor{black}{v_i^n(\z)=v_i(t^n,\z)}$, $i=1,\dots,N$ at time $t^n=n\Delta t$, and we expand them by their generalized polynomial chaos (gPC) expansion 
\be\label{eq:gPCE}
\textcolor{black}{v^n_i(\z) \approx v^{n,M}_i(\z) = \sum_{k=0}^M \hat{v}^n_{i,k} \Psi_k(\z)},
\ee
being $\{\Psi_k(\z)\}_{k=0}^M$ a set of polynomials of degree less or equal to $M\in\mathbb{N}$, orthonormal with respect to the measure $p(\z)d\z$
\[
\int_{I_\z} \Psi_k(\z) \Psi_l(\z) p(\z) d\z = \E_{\z} [ \Psi_k(\cdot) \Psi_l(\cdot) ] = \delta_{kl},
\]
where \textcolor{black}{$I_\z\in\R^{d_\z}$} and $\delta_{kl}$ is the Kronecker delta function. The polynomials $\{\Psi_h(\z)\}_{k=0}^M$ are chosen following the so-called Wiener-Askey scheme, depending on the distribution of the parameters. In \eqref{eq:gPCE}, for fixed $k=0,\dots,M$, the term $\hat{v}^n_{i,h}$ is the projection in the space generated by the polynomial of degree $k\ge 0$ 
\be \label{eq:proj}
	\hat{v}^n_{i,k} = \int_{I_{\z}} v^n_i(\z) \Psi_k(\z) p(\z) d\z = \E_{\z} [ v^n_i(\cdot) \Psi_k(\cdot) ].
\ee
\textcolor{black}{We also have
\[
q^{n,M}_{ij}(\z) = v^{n,M}_i(\z) - v^{n,M}_j(\z) = \sum_{k=0}^M \left(\hat{v}^n_{i,k}-\hat{v}^n_{j,k}\right)\Psi_k(\z)=\sum_{k=0}^M \hat{q}^n_{ij,k}\Psi_k(\z),
\]
with $\hat{q}^n_{ij,k}=\hat{v}^n_{i,k}-\hat{v}^n_{j,k}$, and
\[
\begin{split}
	& h^{n,M}_x(\z) = q^{n,M}_\perp(\z) \cos \phi\\
	& h^{n,M}_y(\z) = -\left( q^{n,M}_y(\z) q^{n,M}_x(\z) \cos\phi + q^{n,M}(\z) q^{n,M}_z(\z) \sin\phi \right) / q^{n,M}_\perp(\z)\\
	& h^{n,M}_z(\z) = -\left( q^{n,M}_z(\z) q^{n,M}_x(\z) \cos\phi - q^{n,M}(\z) q^{n,M}_y(\z) \sin\phi \right) / q^{n,M}_\perp(\z).
\end{split}
\]
We recall that in general $\tau$ defined in \eqref{eq:oneovertau} is a function of the random parameter $\tau=\tau(\z)$, since it may depend on the relative velocity, and then we also have $\tau_0=\tau_0(\z)$. As a consequence, the three kernels proposed in Section \ref{sec:2.2} depend on the random inputs $D^{(i)}_*=D^{(i)}_*(\mu,\tau_0(\z))$ for every $i=1,2,3$. To sample the angles $\theta=\theta(\z)$ and $\phi$ according to $D^{(1)}_*(\mu,\tau_0(\z))$, we need to solve
\be \label{eq:nonlineq_z}
\coth A(\z) - \frac{1}{A(\z)} = e^{-2\tau_0(\z)}
\ee
and then compute
\be \label{eq:theta_D1_z}
\cos \theta(\z) = \frac{1}{A(\z)} \ln (\exp^{-A(\z)} + 2 r_1 \sinh A(\z) )
\ee
and
\be \label{eq:phi_D1_z}
\phi = 2\pi r_2
\ee
with $r_1,r_2 \sim \mathcal{U}([0,1])$. 
Sampling $D^{(2)}_*(\mu,\tau_0(\z))$ correspond to fix
\be \label{eq:theta_D2_z}
\cos \theta (\z) = \nu(\tau_0(\z)),
\ee
with $\nu(\cdot)$ as in \eqref{eq:nu}, and compute $\phi$ as in \eqref{eq:phi_D1_z}. Similarly, if $D^{(3)}_*(\mu,\tau_0(\z))$ is considered, we have
\be \label{eq:theta_D3_z}
\cos \theta (\z) = \tilde{\nu}(\tau_0(\z)),
\ee
with $\tilde{\nu}(\cdot)$ as in \eqref{eq:nutilde}, and again $\phi$ as in \eqref{eq:phi_D1_z}.
} 

We substitute the approximation of the velocities into \eqref{eq:collisions} and we project against $\Psi_l(\z)p(\z)d\z$ for every $l=0,\dots,M$ to obtain
\be \label{eq:sG_collision}
\begin{split}
	& \hat{v}'_{i,k} = \hat{v}^{n}_{i,k} - \dfrac{1}{2}\left( \hat{q}^n_{ij,k} - \sum_{l=0}^{M} \hat{q}^n_{ij,l} \hat{V}^n_{lk} + \hat{W}^n_{k} \right) \\
	& \hat{v}'_{j,k} = \hat{v}^{n}_{j,k} + \dfrac{1}{2}\left( \hat{q}^n_{ij,k} - \sum_{l=0}^{M} \hat{q}^n_{ij,l} \hat{V}^n_{lk} + \hat{W}^n_{k} \right)
\end{split}
\ee
where the collision matrices are defined as
\be \label{eq:matrices}
\begin{split}
	& \hat{V}^n_{lk} = \int_{I_{\z}} \cos \theta_{ij}(\z) \Psi_l(\z) \Psi_k(\z) p(\z) d\z \\
	&\hat{W}^n_{ij,k} = \int_{I_{\z}} h_{ij}^{n,M}(\z) \sin\theta_{ij}(\z) \Psi_k(\z) p(\z) d\z,
\end{split} 
\ee
\textcolor{black}{where the subscript in $\theta_{ij}$ refers to angle sampled for the colliding couple $ij$}.
Therefore, we may rephrase the Nanbu-Babovsky scheme in the sG framework as presented in Algorithm \ref{NB_unc}. Whereas the sG version of the Bird's scheme is given in Algorithm \ref{B_unc}. 

\begin{figure}[htb] 
	\centering
	\begin{minipage}{.9\linewidth}
		\begin{algorithm}[H] 
			\footnotesize
			\caption{\small{sG Nanbu-Babovky for the Landau equation} } \label{NB_unc}
			\begin{itemize}
				\item Compute the initial gPC expansions $\{v^{M,0}_i\}_{i=1}^N$ from the initial distribution $f^0(v)$;
				\item for $n=1$ to $\nt$, given the projections $\{\hat{v}^n_{i,k},\,i=1,\dots,N,\,k=0,\dots,M\}$:
				\begin{itemize}
					\item set $N_c=\textrm{Sround}(\rho N\Delta t / 2 \tau)$;
					\item select the interaction pairs $(i,j)$ uniformly among all the possible ones, and for every pair $(\hat{v}^n_{i,k},\hat{v}^n_{j,k})$:
					\begin{itemize}
						\item \textcolor{black}{compute the cumulative scattering angle $\cos \theta(\z)$ and the angle $\phi$ according to \eqref{eq:theta_D1_z}-\eqref{eq:phi_D1_z} (for $D^{(1)}_*(\mu,\tau_0(\z))$), \eqref{eq:theta_D2_z}-\eqref{eq:phi_D1_z} (for $D^{(2)}_*(\mu,\tau_0(\z))$), or \eqref{eq:theta_D3_z}-\eqref{eq:phi_D1_z} (for $D^{(3)}_*(\mu,\tau_0(\z))$);}
						\item compute the matrices $\hat{V}_{kl}$ and $\hat{W}_{k}$ according to \eqref{eq:matrices};
						\item perform the collision according to \eqref{eq:sG_collision};
						\item set $\hat{v}^{n+1}_{i,k}=\hat{v}'_{i,k}$ and $\hat{v}^{n+1}_{j,k}=\hat{v}'_{j,k}$;
					\end{itemize}
					\item set $\hat{v}^{n+1}_{i,k}=\hat{v}^{n}_{i,k}$ for all the particles that have not been collided; 						
				\end{itemize}
				\item end for.
			\end{itemize}
		\end{algorithm}
	\end{minipage}
\end{figure}
\begin{figure}[htb]
	\centering
	\begin{minipage}{.9\linewidth}
		\begin{algorithm}[H] 
			\footnotesize
			\caption{\small{sG Bird for the Landau equation} } \label{B_unc}
			\begin{itemize}
				\item Compute the initial gPC expansions of the particles $\{v^{M,0}_i\}_{i=1}^N$ by sampling from the initial distribution $f^0(v)=f(v,t=0)$;
				\item set the time counter $t_c=0$;
				\item set $\Delta t_c=2\epsilon/\rho N$;
				\item for $n=1$ to $\nt$, given the projections $\{\hat{v}^n_{i,k},i=1,\dots,N,\,k=0,\dots,M\}$:
				\begin{itemize}
					\item since $t_c < n\Delta t$:
					\begin{itemize}
						\item select a random pair $(i,j)$ uniformly among all the possible ones;
						\item \textcolor{black}{compute the cumulative scattering angle $\cos \theta(\z)$ and the angle $\phi$ according to \eqref{eq:theta_D1_z}-\eqref{eq:phi_D1_z} (for $D^{(1)}_*(\mu,\tau_0(\z))$), \eqref{eq:theta_D2_z}-\eqref{eq:phi_D1_z} (for $D^{(2)}_*(\mu,\tau_0(\z))$), or \eqref{eq:theta_D3_z}-\eqref{eq:phi_D1_z} (for $D^{(3)}_*(\mu,\tau_0(\z))$);}
						\item compute the collision matrices $\hat{V}_{kl}$ and $\hat{W}_{k}$ according to \eqref{eq:matrices};
						\item perform the collision according to \eqref{eq:sG_collision};
						\item set $\hat{w}_{i,k}=\hat{v}'_{i,k}$ and $\hat{w}_{j,k}=\hat{v}'_{j,k}$;
						\item update the time counter $t_c=t_c+\Delta t_c$;		
					\end{itemize}
					\item set $\hat{v}^{n+1}_{i,k}=\hat{w}_{i,k}$ for $i=1,\dots,N$;			
				\end{itemize}
				\item end for.
			\end{itemize}
		\end{algorithm}
	\end{minipage}
\end{figure}
\textcolor{black}{We stress that Remarks \ref{remark_DSMC}-\ref{remark_D*} hold also in the presence of uncertainties. Algorithms \ref{NB_unc}-\ref{B_unc} read as standard schemes for Maxwell molecules, since the physics of the interaction is embedded into the sampled angle $\theta(\z)$, which depends on $\tau_0(\z)$. Moreover, since $A=A(\z)$, the computational cost needed to sample $D^{(1)}_*(\mu,\tau_0(\z))$ scales along with the dimensionality of the random parameter $d_\z$, making the use of such kernel even more prohibitive. }

\subsection{Consistency estimate}
\textcolor{black}{
In this section, we study the error produced by the particle sG method in the evaluation of any observable $\varphi=\varphi(v)$. These results are inspired by Appendix B.2 of \cite{Pareschi2020} and Section 4.2 of \cite{medaglia2022}. 
Let us denote by $f(v,t,\z)$ the solution to the Boltzmann equation with random inputs 
\be \label{eq:boltzmann_uq}
\frac{\partial f}{\partial t} (v,t,\z) = \int_{\R^3}\int_{S^2} B\left(q,\frac{q\cdot n}{|q|},\z\right) \left( f(v',\z)f(v'_*,\z) - f(v,\z)f(v_*,\z) \right) dn dv_*,
\ee
and by $f_\epsilon(v,t,\z)$ the solution to the first order approximation of Boltzmann equation in the presence of uncertainties
\begin{equation}\label{eq:boltzmann_approx1_uq}
	\frac{\partial f_\epsilon}{\partial t} (v,t,\z) = \frac{1}{\epsilon} \left[\int_{\R^3}\int_{S^2}D\left(\mu,\tau_0(\z)\right) f_\epsilon(v',\z)f_\epsilon(v'_*,\z)dndv_* \, - \, \rho f_\epsilon(v,\z)\right].
\end{equation}
We further indicate by
\[
f_{\epsilon,N}(v,t,\z) = \dfrac{1}{N} \sum_{i=1}^N \delta(v-v_i(t,\z)) \qquad f^M_{\epsilon,N}(v,t,\z) = \dfrac{1}{N} \sum_{i=1}^N \delta(v-v^M_i(t,\z))
\]
the empirical density functions obtained with the velocities $v_i(t,\z)$ and their gPC approximation $v^M_i(t,\z)$, respectively. The expectations of $\varphi$ reads 
\[
\left\langle \varphi, f_\epsilon \right\rangle (t,\z) = \int_{\R^3} \varphi(v) f_\epsilon(v,t,\z) dv 
\]
so that 
\[
\left\langle \varphi, f_{\epsilon,N} \right\rangle (t,\z) = \dfrac{1}{N} \sum_{i=1}^{N} \varphi ( v_i (t,\z) ), \qquad  \left\langle \varphi, f^{M}_{\epsilon,N} \right\rangle (t, \z) = \dfrac{1}{N} \sum_{i=1}^{N} \varphi ( v^M_i (t,\z) ).
\]
From the central limit theorem we have the following result.
\begin{lemma} \label{lemma_cons}
	If we denote by $\mathbb{E}_{\R^3}[\cdot]$ the expectation with respect to $f_\epsilon$ in the velocity space, the root mean square error satisfies
	\[
	\mathbb{E}_{\R^3} \bigg[ \Big( \left\langle \varphi, f_\epsilon \right\rangle (t, \z) - \left\langle \varphi, f_{\epsilon,N} \right\rangle (t, \z)    \Big)^2 \bigg]^{1/2} = \dfrac{\sigma_\varphi (t,\z)}{N^{1/2}}
	\]
	with
	\[
	\sigma^2_\varphi (t,\z)  =  \int_{\R^3}  ( \varphi(v) - \left\langle \varphi, f_\epsilon \right\rangle (t,\z) )^2  f_\epsilon(v,t,\z) dv.
	\]
\end{lemma}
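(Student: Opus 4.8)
The plan is to reduce the statement to the elementary variance identity for an average of independent, identically distributed random variables, the central limit theorem entering only to justify the Gaussian interpretation of the error. First I would recall that, by construction of the DSMC particle system underlying Algorithms \ref{NB_det}--\ref{B_det}, the velocities $v_1(t,\z),\dots,v_N(t,\z)$ are (treated as) i.i.d.\ samples distributed according to the probability density $f_\epsilon(\cdot,t,\z)/\rho$, the total mass $\rho$ being constant in time by conservation. Hence, for every fixed index $i$ and fixed $\z$,
\[
\mathbb{E}_{\R^3}\big[\varphi(v_i(t,\z))\big] = \int_{\R^3}\varphi(v)\, f_\epsilon(v,t,\z)\,dv = \left\langle \varphi, f_\epsilon \right\rangle(t,\z),
\]
so the empirical mean $\left\langle \varphi, f_{\epsilon,N}\right\rangle$ is an unbiased estimator of $\left\langle \varphi, f_\epsilon\right\rangle$.

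Next I would write the error as a centred empirical sum,
\[
\left\langle \varphi, f_\epsilon \right\rangle(t,\z) - \left\langle \varphi, f_{\epsilon,N}\right\rangle(t,\z) = -\frac{1}{N}\sum_{i=1}^{N}\Big(\varphi(v_i(t,\z)) - \left\langle \varphi, f_\epsilon\right\rangle(t,\z)\Big),
\]
a sum of $N$ i.i.d., zero-mean random variables each of variance $\sigma_\varphi^2(t,\z) = \int_{\R^3}\big(\varphi(v)-\left\langle \varphi, f_\epsilon\right\rangle(t,\z)\big)^2 f_\epsilon(v,t,\z)\,dv$. Squaring and taking the velocity-space expectation, the $N(N-1)$ off-diagonal terms vanish by independence and each of the $N$ diagonal terms contributes $\sigma_\varphi^2/N^2$, so that $\mathbb{E}_{\R^3}\big[(\cdots)^2\big] = \sigma_\varphi^2(t,\z)/N$; taking the square root gives the asserted identity. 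The central limit theorem then yields, in addition, that $N^{1/2}\big(\left\langle \varphi, f_\epsilon\right\rangle - \left\langle \varphi, f_{\epsilon,N}\right\rangle\big)$ is asymptotically Gaussian with variance $\sigma_\varphi^2(t,\z)$, although only the exact variance identity is needed for the statement.

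The only delicate point, which I would flag explicitly, is the independence of the particle samples: in a genuine DSMC run the collision step couples the two interacting velocities, so the $v_i(t,\z)$ are only approximately independent. The standard way to make the argument rigorous is to invoke propagation of chaos for the Nanbu--Babovsky and Bird dynamics (as in \cite{pareschirusso} and the appendix of \cite{Pareschi2020}), which guarantees that in the many-particle limit the one-particle marginals factorise and the empirical measure concentrates on $f_\epsilon$; alternatively, and consistently with the Monte Carlo interpretation of the scheme, one simply adopts as a working hypothesis that the initial velocities are drawn i.i.d.\ from $f_0$ and that this property is preserved in law by the collision update. Under either reading the variance computation above goes through verbatim, and the gPC version only changes which empirical measure ($f_{\epsilon,N}$ or $f^M_{\epsilon,N}$) is being evaluated, not the probabilistic argument.
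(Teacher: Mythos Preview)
Your proposal is correct and follows essentially the same approach as the paper, which in fact gives no detailed proof at all: the lemma is simply introduced with the sentence ``From the central limit theorem we have the following result'' and then stated. Your argument --- writing the error as a centred empirical average of i.i.d.\ samples and computing its variance directly --- is exactly the standard justification behind that one-line invocation, and your remark that only the variance identity (not the full CLT) is needed, together with the caveat about independence and propagation of chaos, adds useful precision that the paper omits.
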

\noindent Denoting by $H^r_p(I_\z)$ the weighted Sobolev space
\[
H^r_p(I_\z) = \bigg\{ v : I_\z \rightarrow \R^3 \; : \; \dfrac{\partial^k v}{\partial \z^k} \in L^2_p(I_\z), \; 0 \leq k \leq r \bigg\},
\]
from the polynomial approximation theory \cite{xiu2010}, we have the following spectral estimate.
\begin{lemma} \label{lemma_cons2}
	For any $v(\z) \in H^r_p(I_\z), \, r\geq 0$, there exists a constant $C$ independent of $M>0$ such that 
	\[
	\| v - v^M \|_{L^2_p(I_\z)} \leq \dfrac{C}{M^r} \| v \|_{H^r_p(I_\z)}.
	\]
\end{lemma}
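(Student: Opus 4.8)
The statement in question is Lemma \ref{lemma_cons2}, the spectral convergence estimate for the gPC projection. This is a standard result from polynomial approximation theory (essentially a Céa-type estimate for orthogonal projections combined with best-approximation bounds for truncated polynomial chaos expansions). Let me sketch how I would prove it.

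The plan is to identify $v^M$ with the $L^2_p$-orthogonal projection of $v$ onto $\mathbb{P}_M$, the space of polynomials of degree at most $M$, and then invoke the classical spectral accuracy estimate for such projections of functions in weighted Sobolev spaces.

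\paragraph{Proof proposal.}

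The plan is to exploit the fact that the gPC coefficients $\hat{v}^n_{i,k}$ defined in \eqref{eq:proj} are exactly the coefficients of the $L^2_p(I_\z)$-orthogonal projection $\Pi_M$ onto the space $\mathbb{P}_M$ spanned by $\{\Psi_0,\dots,\Psi_M\}$; indeed, by orthonormality of the $\Psi_k$ with respect to $p(\z)\,d\z$, one has $v^M = \Pi_M v = \sum_{k=0}^M \langle v,\Psi_k\rangle_{L^2_p}\Psi_k$. First I would recall the best-approximation property of the orthogonal projection: for every $w\in\mathbb{P}_M$,
\[
\| v - v^M \|_{L^2_p(I_\z)} = \| v - \Pi_M v \|_{L^2_p(I_\z)} \leq \| v - w \|_{L^2_p(I_\z)},
\]
so it suffices to exhibit a single good polynomial approximant $w$ and bound $\| v-w\|_{L^2_p}$.

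Next I would construct $w$ by truncating a tensorized expansion in the orthonormal basis (or, component-wise in the three velocity coordinates, by truncating the one-dimensional expansions and taking tensor products in the multivariate case). The key estimate to invoke is the classical decay rate of the expansion tail for $v\in H^r_p(I_\z)$: writing $v=\sum_{k\geq 0}\hat v_k\Psi_k$, Parseval gives $\| v-v^M\|_{L^2_p}^2 = \sum_{k>M}|\hat v_k|^2$, and each high-order coefficient can be bounded by integrating by parts $r$ times against the basis polynomials (using the Sturm--Liouville / Rodrigues structure of the Wiener--Askey families), which produces a factor decaying like $M^{-r}$ relative to $\| \partial^r_\z v\|_{L^2_p} \leq \| v\|_{H^r_p}$. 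Collecting these bounds yields the stated inequality with a constant $C$ depending only on $r$ and the weight $p$ (in particular independent of $M$). For the $\R^3$-valued case one applies the scalar estimate to each Cartesian component and sums, absorbing the factor $\sqrt{3}$ into $C$.

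The main obstacle — and the reason this is cited rather than reproved in detail — is that making the tail-decay estimate fully rigorous requires care about the admissible class of measures $p$ (compactly supported versus Gaussian-type with unbounded support), about whether $I_\z$ is multidimensional so that mixed regularity versus isotropic Sobolev regularity enters the constant, and about the precise regularity index $r$ when it is not an integer. Since the paper only needs the clean statement and invokes \cite{xiu2010}, I would not grind through the integration-by-parts bookkeeping but would instead state the projection identity $v^M=\Pi_M v$, quote the best-approximation inequality, and then cite the spectral estimate from the polynomial approximation literature; the genuinely new content of the paper lies elsewhere (the consistency of the regularized kernel and the sG particle scheme), so a one-line reduction to the standard result is the appropriate level of detail here.
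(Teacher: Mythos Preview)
Your proposal is correct and matches the paper's treatment: the paper does not prove Lemma~\ref{lemma_cons2} at all but simply states it as a consequence of polynomial approximation theory with a citation to \cite{xiu2010}. Your identification of $v^M$ with the $L^2_p$-orthogonal projection and your sketch of the tail-decay argument are the standard justification, and your instinct to reduce to a citation is exactly what the authors do.
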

\noindent Next, for any random variable $V(t,\z)$ taking values in $L^2_p(I_\z)$, we define
\[
\| V \|_{L^2(I_\z,L^2(\R^3))} = \| \mathbb{E}_{\R^3} [V^2]^{1/2} \|_{L^2_p(I_\z)},
\]
and equivalently
\[
\| W \|_{L^2(\R^3,L^2(I_\z))} =  \mathbb{E}_{\R^3} \left[ \| V \|^2_{L^2_p(I_\z)}  \right]^{1/2}. 
\]
Then, we have the following result.
\begin{theorem} \label{theorem:error_estimate}
	Let $f(v,t,\z)$ be a probability density function in $v$ at the time $t$, solution to the Boltzmann equation with random inputs \eqref{eq:boltzmann_uq}. 
	Let $f^M_{\epsilon,N}(v,t,\z)$ be the empirical measure obtained from the $N$-particles sG approximation $\{v^M_i(\z,t)\}_i$, numerical resolution to the approximated Boltzmann equation with uncertainties \eqref{eq:boltzmann_approx1_uq}. If $v_i(t,\z)\in H^r_p(I_\z)$ for every $i=1,\dots,N$, we have the following estimate 
	\[
	\| \left\langle \varphi, f \right\rangle - \left\langle \varphi, f^M_{\epsilon,N} \right\rangle \|_{L^2(\R^3,L^2_p(I_\z))} \leq O(\epsilon) +  \dfrac{\|\sigma_{\varphi}\|_{L^2_p(I_\z)}}{N^{1/2}} + \dfrac{C}{M^r} \left( \dfrac{1}{N} \sum_{i=1}^{N} \| \nabla \varphi(\xi_i) \|_{L^2_p(I_\z)} \right),
	\]
	where $\varphi$ is a test function, $C>0$ is a constant independent on $M$ and $\xi_i=(1-\vartheta)v_i+\vartheta v^M_i,\,\vartheta\in(0,1)$.
\begin{proof}
	Thanks to the triangular inequality we have 
	\begin{equation}
	\begin{split}
		\| \left\langle \varphi, f \right\rangle - \left\langle \varphi, f^M_{\epsilon,N} \right\rangle \|_{L^2(\R^{3},L^2_p(I_\z))}  \leq & \underbrace{  \| \left\langle \varphi, f \right\rangle   - \left\langle \varphi, f_\epsilon \right\rangle \|_{L^2(\R^{3},L^2_p(I_\z))}   }_{I}  \\ 
		& +\quad \underbrace{  \| \left\langle \varphi, f_{\epsilon} \right\rangle - \left\langle \varphi, f_{\epsilon,N} \right\rangle \|_{L^2(\R^{3},L^2_p(I_\z))}  }_{II} \\
		& +\quad \underbrace{  \| \left\langle \varphi, f_{\epsilon,N} \right\rangle - \left\langle \varphi, f^M_{\epsilon,N} \right\rangle \|_{L^2(\R^{3},L^2_p(I_\z))}   }_{III}.
	\end{split}
	\end{equation} 
	As presented in Section \ref{sec:2.1}, $f_\epsilon(v,t,\z)$ is a first order approximation of the Boltzmann equation in $\epsilon$. As observed by Nanbu and Bobylev in \cite{bobylev2000}, the accuracy is not formally worse than any other first order approximation of the Boltzmann equation. Therefore, the term $I$ is of first order in $\epsilon$: 
	\[
	\| \left\langle \varphi, f \right\rangle   - \left\langle \varphi, f_{\epsilon} \right\rangle \|_{L^2(\R^{3},L^2(I_\z))} = O(\epsilon).
	\]	
	The second term $II$ can be evaluated exploiting the result of Lemma \ref{lemma_cons}. Therefore, we have
	\[
	II = \dfrac{\| \sigma_{\varphi}(\z) \|_{L^2(I_\z)}}{N^{1/2}}.
	\]
	Finally, for $III$ we have
	\[
	\left\| \dfrac{1}{N} \sum_{i=1}^{N} \big(  \varphi(v_i) - \varphi(v^M_i)  \big)\right\|_{L^2(\R^{3},L^2_p(I_\z))} \leq \dfrac{1}{N} \sum_{i=1}^{N} \| \varphi(v_i) - \varphi(v^M_i)  \|_{L^2(\R^{3},L^2_p(I_\z))},
	\]
	and from the mean value theorem $\varphi(v_i) - \varphi(v^M_i) = \nabla \varphi(\xi_i) \cdot (v_i - v^M_i)$, for $\xi_i=(1-\vartheta)v_i+\vartheta v^M_i,\,\vartheta\in(0,1)$. Thanks to Lemma \ref{lemma_cons2} with $C=\max_i C_i \| v_i \|_{H^r_p(I_\z)}$ we have
	\[
	III \leq  \dfrac{1}{N} \sum_{i=1}^{N} \| \nabla\varphi(\xi_i)  \|_{L^2_p(I_\z)} \|v_i - v^M_i  \|_{L^2_p(I_\z)} \leq \dfrac{C}{M^r} \bigg( \dfrac{1}{N} \sum_{i=1}^{N} \| \nabla\varphi(\xi_i)  \|_{L^2_p(I_\z)} \bigg).
	\]
\end{proof}
\end{theorem}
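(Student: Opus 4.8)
The plan is to estimate the total error by a triangle inequality that inserts the two intermediate objects $\left\langle \varphi, f_\epsilon \right\rangle$ and $\left\langle \varphi, f_{\epsilon,N} \right\rangle$ between $\left\langle \varphi, f \right\rangle$ and $\left\langle \varphi, f^M_{\epsilon,N} \right\rangle$, so that the error splits into three contributions: (I) a \emph{modeling error} between the true Boltzmann solution and its first-order surrogate $f_\epsilon$; (II) a \emph{statistical error} from replacing $f_\epsilon$ by the $N$-particle empirical measure $f_{\epsilon,N}$; and (III) a \emph{spectral truncation error} from replacing each particle $v_i$ by its degree-$M$ gPC reconstruction $v_i^M$. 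Each piece is then bounded separately using, respectively, the grazing-collision consistency of Section \ref{sec:2.1}, Lemma \ref{lemma_cons}, and Lemma \ref{lemma_cons2}.

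For term (I), I would invoke the construction of Section \ref{sec:2.1}: the approximated equation \eqref{eq:boltzmann_approx1_uq} arises from the first-order expansion $J \approx \epsilon^{-1}(\exp(\epsilon J) - \hat I)$, and, as observed by Bobylev and Nanbu in \cite{bobylev2000}, it is consistent with the Boltzmann equation to first order in $\epsilon$ (and, under the standing regularity assumptions on $B(\cdot,\cdot,\z)$, uniformly in $\z$). Testing the resulting $O(\epsilon)$ estimate for $f - f_\epsilon$ against the bounded observable $\varphi$ and taking the mixed norm in $(v,\z)$ then gives $\mathrm{(I)} = O(\epsilon)$. For term (II), I would apply Lemma \ref{lemma_cons} pointwise in $\z$: by the central limit theorem the root mean square error (with respect to $f_\epsilon$ in velocity) of the Monte Carlo average equals $\sigma_\varphi(t,\z)/N^{1/2}$; taking the $L^2_p(I_\z)$ norm in the random variable yields $\mathrm{(II)} = \|\sigma_\varphi\|_{L^2_p(I_\z)}/N^{1/2}$.

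For term (III), I would first move the norm inside the empirical sum by the triangle inequality, then use the mean value theorem $\varphi(v_i) - \varphi(v_i^M) = \nabla\varphi(\xi_i)\cdot(v_i - v_i^M)$ with $\xi_i = (1-\vartheta)v_i + \vartheta v_i^M$, $\vartheta\in(0,1)$, and finally apply the spectral estimate of Lemma \ref{lemma_cons2} to each factor $\|v_i - v_i^M\|_{L^2_p(I_\z)} \le C_i M^{-r}\|v_i\|_{H^r_p(I_\z)}$, absorbing $\max_i C_i\|v_i\|_{H^r_p(I_\z)}$ into a single constant $C$. This produces $\mathrm{(III)} \le (C/M^r)\,\frac{1}{N}\sum_{i=1}^N \|\nabla\varphi(\xi_i)\|_{L^2_p(I_\z)}$, and summing the three bounds delivers the claimed estimate.

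The main obstacle is term (I): whereas (II) and (III) are clean consequences of probability and approximation theory already packaged as Lemmas \ref{lemma_cons} and \ref{lemma_cons2}, the $O(\epsilon)$ modeling bound rests on the delicate grazing-collision consistency analysis, and making it fully rigorous would require quantitative, $\z$-uniform control of the differences between the Green function $B_\epsilon$, its Landau-type approximation $D$, and the simplified kernels $D_*^{(i)}$ — here one leans on the results of \cite{bobylev2000}. A secondary technical point is the bookkeeping with the mixed norms $\|\cdot\|_{L^2(\R^3,L^2_p(I_\z))}$: one must check that Fubini applies, so that the velocity-space expectation $\mathbb{E}_{\R^3}[\cdot]$ and the integration in $\z$ can be interchanged when combining the pointwise-in-$\z$ bounds of the two lemmas into a single estimate.
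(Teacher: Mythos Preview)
Your proposal is correct and follows essentially the same approach as the paper: the same three-term triangle-inequality decomposition, with term I handled by the $O(\epsilon)$ consistency from \cite{bobylev2000}, term II by Lemma \ref{lemma_cons}, and term III by the triangle inequality, the mean value theorem, and Lemma \ref{lemma_cons2} with $C=\max_i C_i\|v_i\|_{H^r_p(I_\z)}$. Your additional remarks on the delicacy of the $O(\epsilon)$ bound and the Fubini issue for the mixed norms are valid caveats but go beyond what the paper itself addresses.
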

\begin{remark}
The regularity of the Landau approximation in the space of the random parameter is crucial for the gPC approximation, as shown in Lemma \ref{lemma_cons2} and Theorem \ref{theorem:error_estimate}. In this direction, we observe that the angle sampled from the kernel $D^{(3)}_*$ is such that $\cos \theta(\z) = \tilde{\nu}(\tau_0(\z))$ is of class $C^\infty$ in the space of the random parameters. On the contrary, considering the kernel $D^{(2)}_*$, we have that the function $\nu(\tau_0(\z))$ is not differentiable in $\tau_0(\z)=1$. Similarly, taking $D^{(1)}_*$ into account, the numerical resolution of the nonlinear equation \eqref{eq:nonlineq_z} forces the introduction of cut-offs at the boundary of the domain of $A(\z)$ (see \cite{nanbu1997}, Section C).
\end{remark}
}

\section{Numerical examples and applications}\label{sec:4}
In this section, we present several numerical test and examples to validate our algorithms both without and with uncertain parameters. First, we investigate the Maxwellian case without uncertainties. Then, we show several tests for the DSMC-sG method, for both the Nanbu-Babovsky and the Bird's schemes. We check for the spectral convergence and the accordance with the exact BKW solution of the Maxwellian case with uncertainties. Then, we consider the Coulombian case, focusing the attention on the regularity of the kernels $D^{(1)}_*$, $D^{(2)}_*$, and $D^{(3)}_*$. We check again for the spectral convergence, and then we test the schemes on the standard case studies for the homogeneous Landau equation. In particular, we concentrate on the capability to reach the equilibrium starting from different uncertain initial conditions, i.e., anisotropic initial temperature, sum of two Gaussians, and bump-on-tail distributions. In all the subsequent tests, we consider the physical constants fixed as follows: $e=\epsilon_0=m=\rho=1$, and $\log \Lambda = 0.5$. 

\textcolor{black}{The number of particles is varied between the values $N=10^6,\,5\times10^6,\,5\times10^7$. In more details, $N=10^6$ is used to compute the stochastic Galerkin error and to compare the DSMC-sG and the DSMC-MC schemes; $N=5\times10^6$ is adopted in all the tests showing the moments of the distribution; $N=5\times10^7$ is used to display the distribution $f(\cdot)$ or its marginals. The reason for this choice is that the distribution $f(\cdot)$ is reconstructed using histograms in 3D and therefore it is necessary to choose a high number of particles to smooth out statistical fluctuations due to the Monte Carlo nature of the algorithm. On the other hand, the moments of the distribution are averaged quantities and therefore fewer particles can be used. The convergence and comparison tests are independent from $N$. We will return to this in the following.}

\subsection{Test 1: Exact solution in the Maxwellian case}
\begin{figure}
	\centering
	\includegraphics[width = 0.46\linewidth]{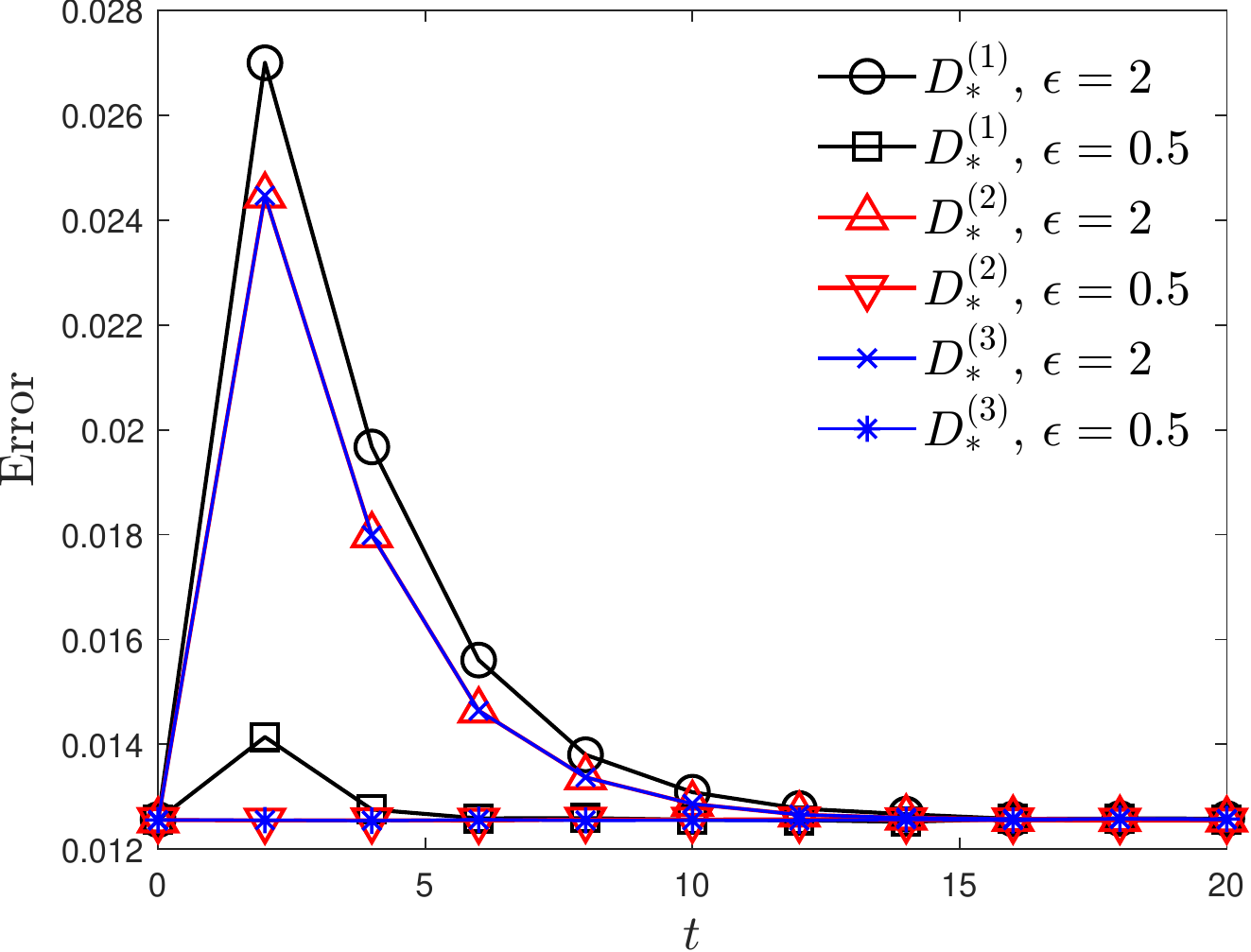}
	\includegraphics[width = 0.45\linewidth]{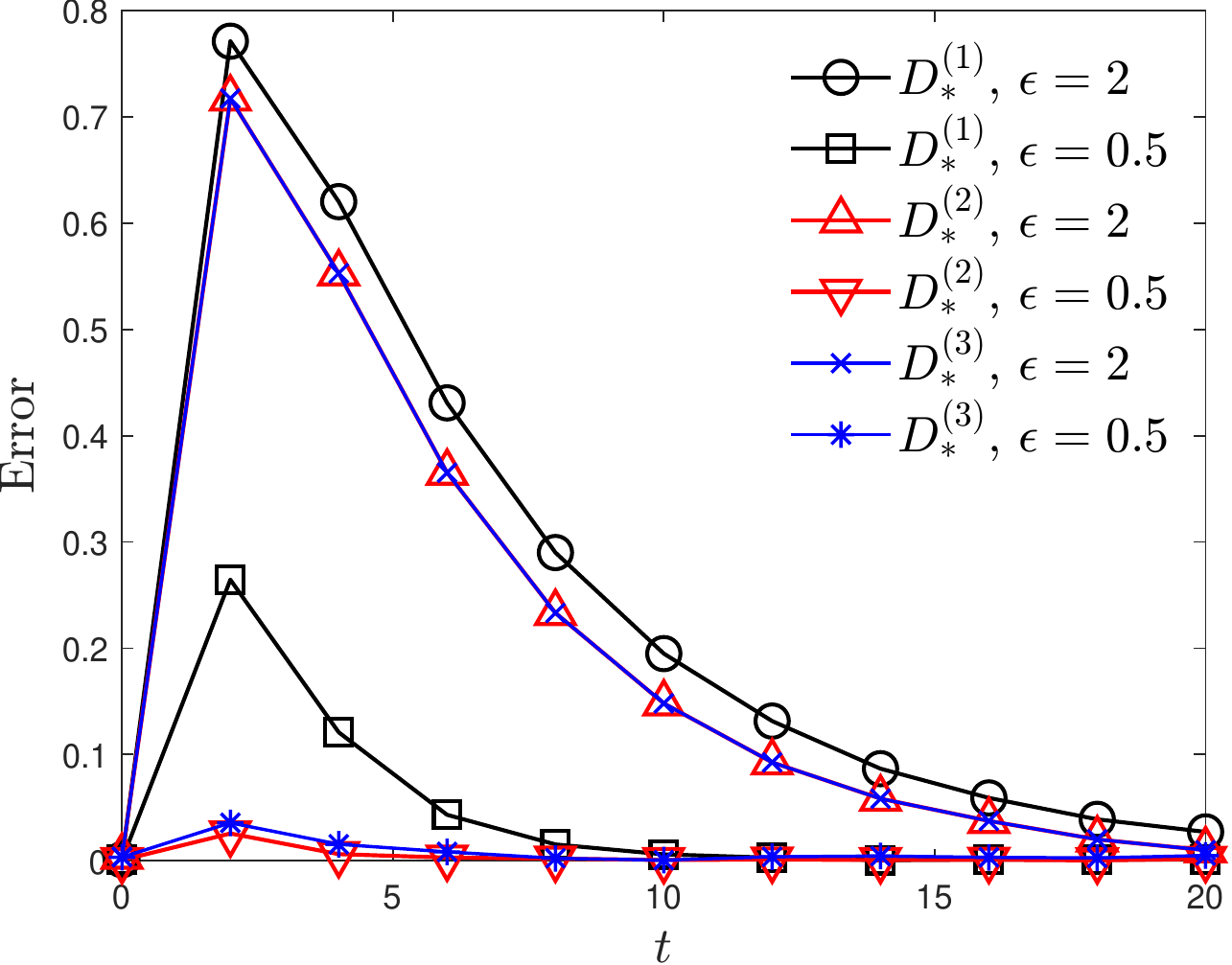}
	\caption{\textcolor{black}{\small{\textbf{Test 1}. Time evolution of the relative $L^2$ errors with respect to the BKW solution of the distribution $f(v,t)$ (left) and of the fourth order moment $\textrm{M}4(t)$ (right), for the kernels $D^{(1)}_*$, $D^{(2)}_*$, and $D^{(3)}_*$, and different values of $\epsilon=\rho\Delta t$. In all the tests, we use $5\times10^7$ particles and the Nanbu-Babovsky scheme. Initial conditions given by \eqref{eq:BKW_det} with $t=0$ and $T=1$.}}}
	\label{fig:test_1_L2_err}
\end{figure}
\begin{figure}
	\centering
	\includegraphics[width = 0.45\linewidth]{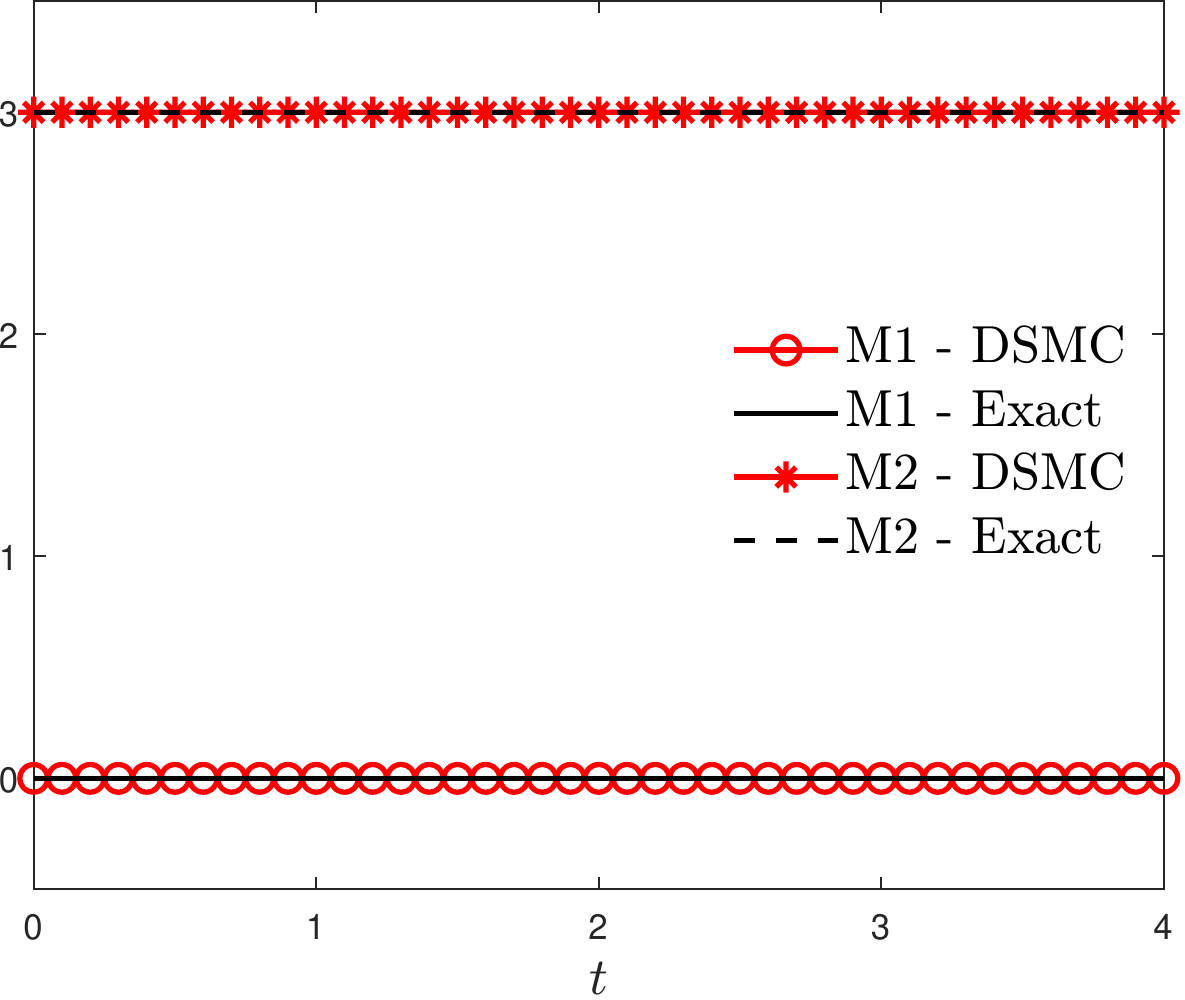}
	\includegraphics[width = 0.46\linewidth]{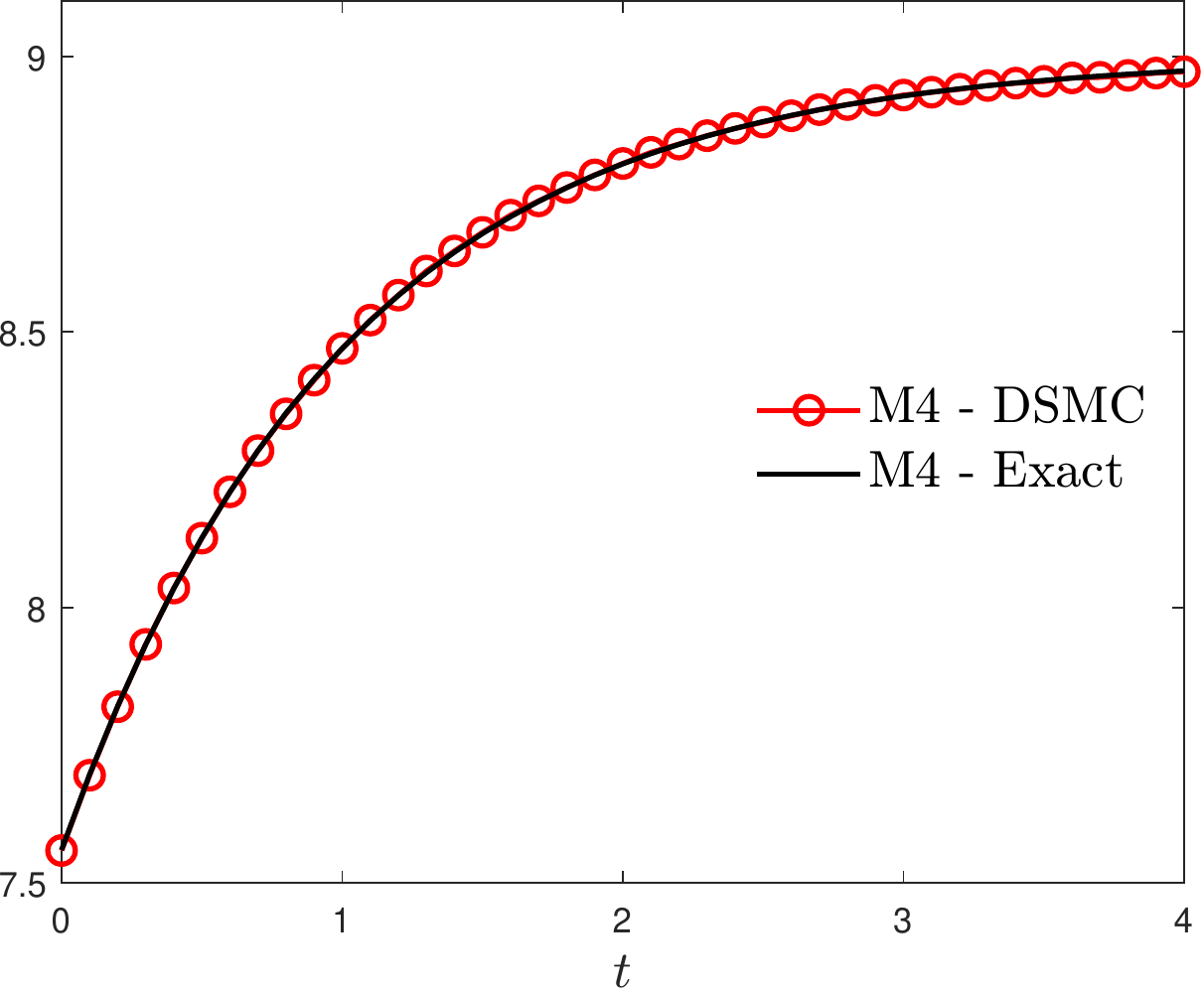}
	\caption{\small{\textbf{Test 1}. Comparison between the time evolution of the first and second order moments (left), and of the fourth order moment (right), obtained with the DSMC method and from the exact BKW solution. In all the tests, we use $5\times10^6$ particles, $\Delta t=\epsilon/\rho=0.1$, the kernel $D^{(3)}_*$, and the Nanbu-Babovsky scheme. Initial conditions given by \eqref{eq:BKW_det} with $t=0$ and $T=1$.}}
	\label{fig:test_1_moments}
\end{figure}

We consider the model with Maxwell molecules, i.e., $\gamma=0$ in \eqref{eq:scattering}. In 3D, an exact solution is given by (see^^>\cite{carrillo2020}, Appendix \ref{sec:appendixA})
\be\label{eq:BKW_det}
f(v,t) = \frac{1}{(2\pi K(t))^{3/2}} e^{-\frac{|v|^2}{2K(t)}} \left(\frac{5K(t) - 3 T}{2K(t)} + \frac{T-K(t)}{2K(t)^2}|v|^2\right),
\ee
with $T$ temperature and
\[
\textcolor{black}{K(t) = T \left( 1 - \frac{2}{5}e^{-t/2} \right).}
\]
We recall that the moments of order zero, one, and two are conserved, while the exact time evolution of the fourth order moment reads
\be \label{eq:m4_det}
\textrm{M}4(t) = 9 K(t) (2T - K(t)).
\ee
In Figure \ref{fig:test_1_L2_err}, we show the relative $L^2$ errors of the distribution $f(v,t)$ and the fourth order moment $\textrm{M}4(t)$ of the DSMC approximation with respect to the exact BKW solution \eqref{eq:BKW_det} and \eqref{eq:m4_det}. In particular, we compare the different kernels $D^{(1)}_*$, $D^{(2)}_*$, and $D^{(3)}_*$, together with different values of the scaling parameter $\epsilon=2,0.5$. We use \textcolor{black}{$N=5\times10^7$ particles}, and initial conditions given by \eqref{eq:BKW_det} with $t=0$ and $T=1$. We choose the Nanbu-Babovsky scheme given by Algorithm \ref{NB_det}. As we can notice, the kernels $D^{(2)}_*$ and $D^{(3)}_*$ perform better than $D^{(1)}_*$ for small times. 

In Figure \ref{fig:test_1_moments} we display the time evolution of the first and second order moments (left panel), and the fourth order moment (right panel) computed with the DSMC scheme, together with the exact solutions. We choose the Nanbu-Babovsky algorithm, kernel $D^{(3)}_*$, $N=5\times10^6$ particles, $\Delta t=\epsilon/\rho=0.1$, and initial conditions given by \eqref{eq:BKW_det} with $t=0$ and $T=1$.

\subsection{Test 2: Trubnikov test} \label{sec:test2}
\begin{figure}
	\centering
	\includegraphics[width = 0.45\linewidth]{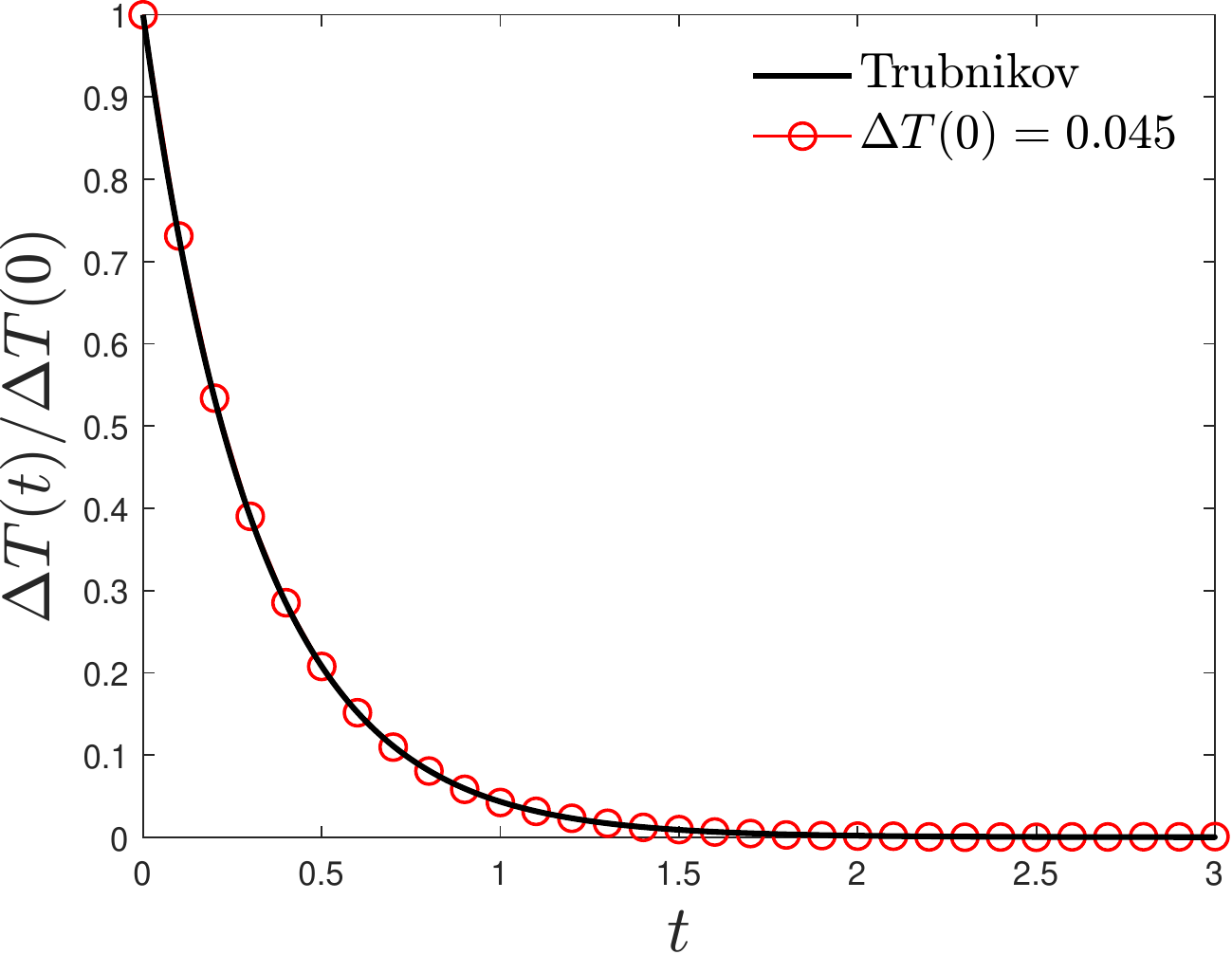}
	\includegraphics[width = 0.45\linewidth]{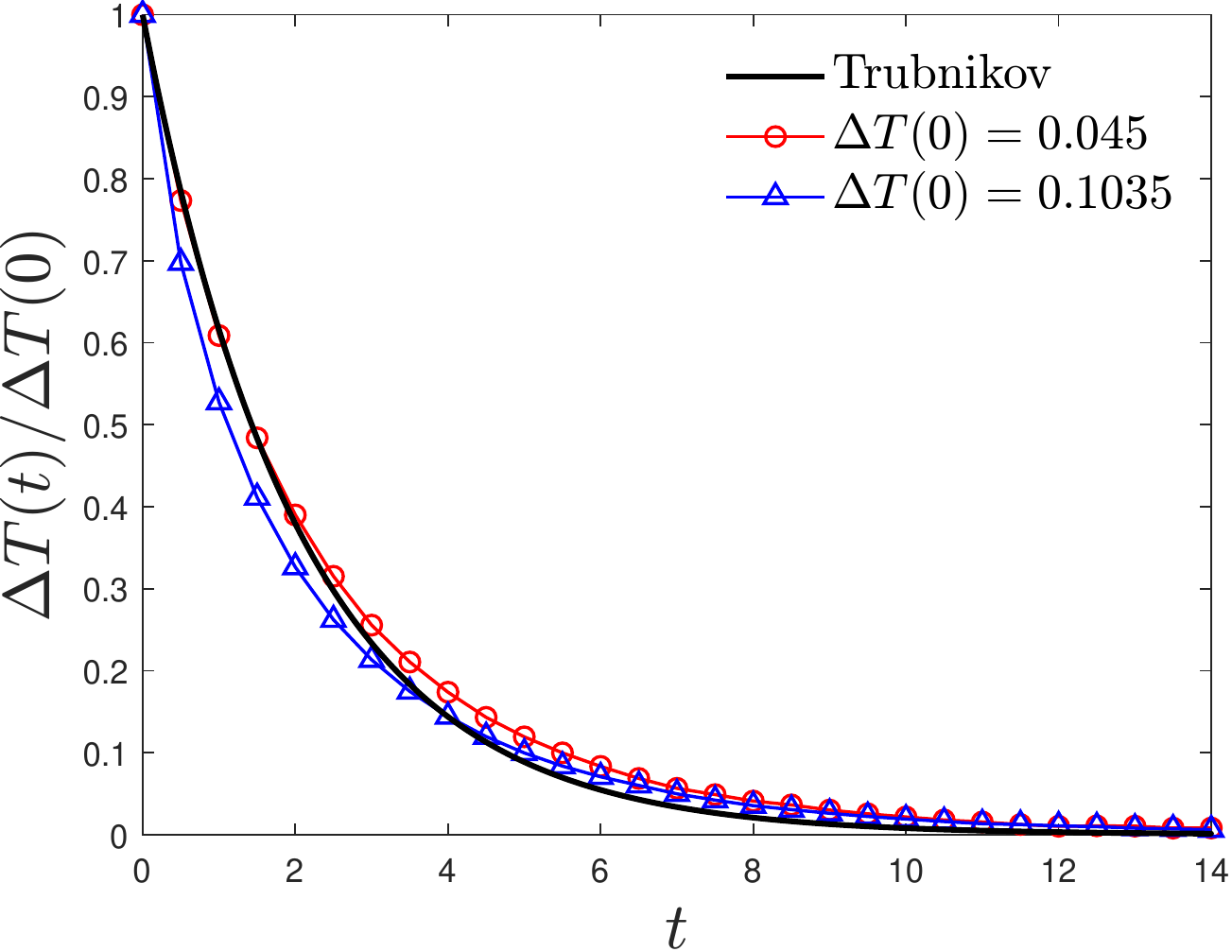}	
	\caption{\small{\textbf{Test 2}. Time evolution of the relative temperature difference $\Delta T(t)/\Delta t(0)$ for the Maxwellian (left) and the Coulomb (right) cases. The solid black lines are the Trubnikov solutions with rate given, respectively, by \eqref{eq:trub_max} and \eqref{eq:trub_coul}. The particles are $N=5\times10^6$, the time step $\Delta t=0.1$, and the temperature $T=0.07$. We choose the Nanbu-Babovsky scheme with the kernel $D^{(3)}_*$. Initial conditions given by \eqref{eq:init_trub_det}, with $z$-temperature $T^0_z=0.04$ for the Maxwellian case, and $T^0_z=0.04,\,0.001$ for the Coulomb case.}}
	\label{fig:test_2_trubnikovdet}
\end{figure}
We initialize the distribution as an ellipsoid, i.e., as
\be \label{eq:init_trub_det}
f_0(v) = \frac{1}{\left(2\pi\right)^{3/2}} \frac{1}{\sqrt{T^0_x T^0_y T^0_z}} \left(e^{-\frac{v^2_x}{2 T^0_x}}e^{-\frac{v^2_y}{2 T^0_y}}e^{-\frac{v^2_z}{2 T^0_z}}\right),
\ee
with anisotropic initial temperature
\[
T^0_x = T^0_y > T^0_z. 
\]
The temperature difference $\Delta T(t)=T_x(t)-T_z(t)$ goes to zero exponentially with a specific rate $\tau_T$
\[
\Delta T(t) = \Delta T(0) e^{-t/\tau_T}.
\]
In the Maxwellian case (see Appendix \ref{sec:appendixA} for further details), we have 
\be \label{eq:trub_max}
\tau_{T} = \frac{2}{3\rho},
\ee
\textcolor{black}{while} in the Coulombian case Trubnikov^^>\cite{Trubnikov1965} obtained an approximated solution in the limit of small temperature difference $|T^0_x - T^0_z| \ll 1$, that is 
\be \label{eq:trub_coul}
\tau_T = \frac{5}{8}\sqrt{2\pi} \left( \frac{8\sqrt{m}}{\pi \sqrt{2}} \frac{T^{3/2}}{e^4 \rho \log \Lambda}\right).
\ee
We choose $N=5\times10^6$ particles, $\Delta t=0.1$, and the Nanbu-Babovsky scheme with the kernel $D^{(3)}_*$. We fix the total temperature $T = 0.07$ and we vary the initial $z$-temperature $T^0_z$ to change $\Delta T(0)$. In Figure \ref{fig:test_2_trubnikovdet} we compare the benchmark solutions with the particle approximations, for both the Maxwellian and the Coulombian cases. In the first scenario (left panel), the decreasing rate does not depend on the magnitude of the temperature difference, therefore we choose $T^0_z=0.04$ so that $\Delta T(0)=0.045$ but any other choice gives the same result. In the Coulombian case (right panel), we choose $T^0_z=0.04,\,0.001$ to have $\Delta T(0)=0.045,\,0.1035$. \textcolor{black}{We note that the higher $\Delta T(0)$, the higher the discrepancy of the numerical results with the Trubnikov solution. In fact, as pointed out in Appendix \ref{sec:appendixA}, while the Maxwellian rate \eqref{eq:trub_max} is exact, the Coulombian rate \eqref{eq:trub_coul} is approximated and requires $\Delta T(0)\ll 1$. Besides, as the analytical Trubnikov solution is approximated, even for small differences in the initial temperature there is always a disagreement with the numerical results.}

\subsection{Test 3: Maxwellian case with uncertainties} \label{sec:test3}
\begin{figure}
	\centering
	\includegraphics[width = 0.45\linewidth]{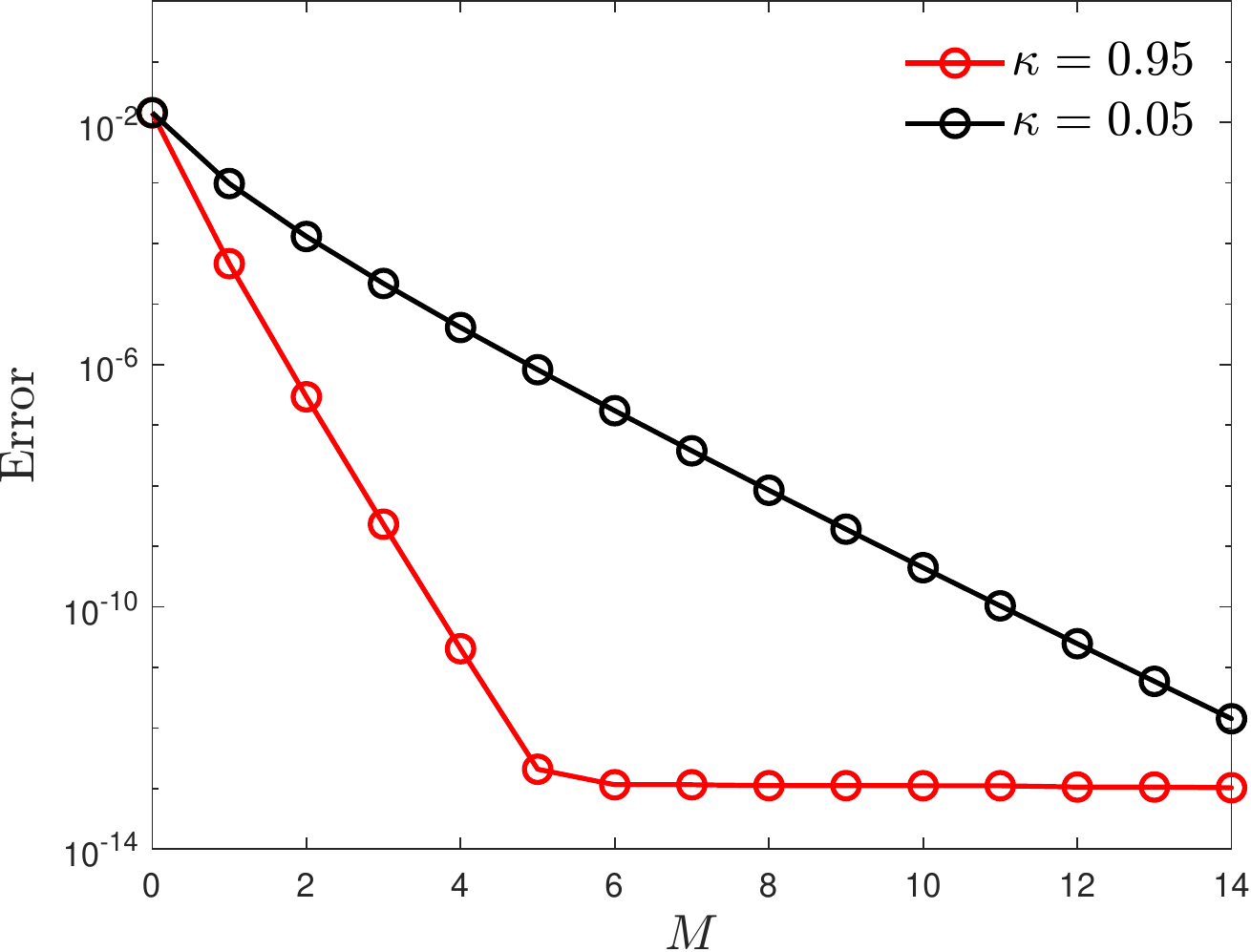}
	\includegraphics[width = 0.45\linewidth]{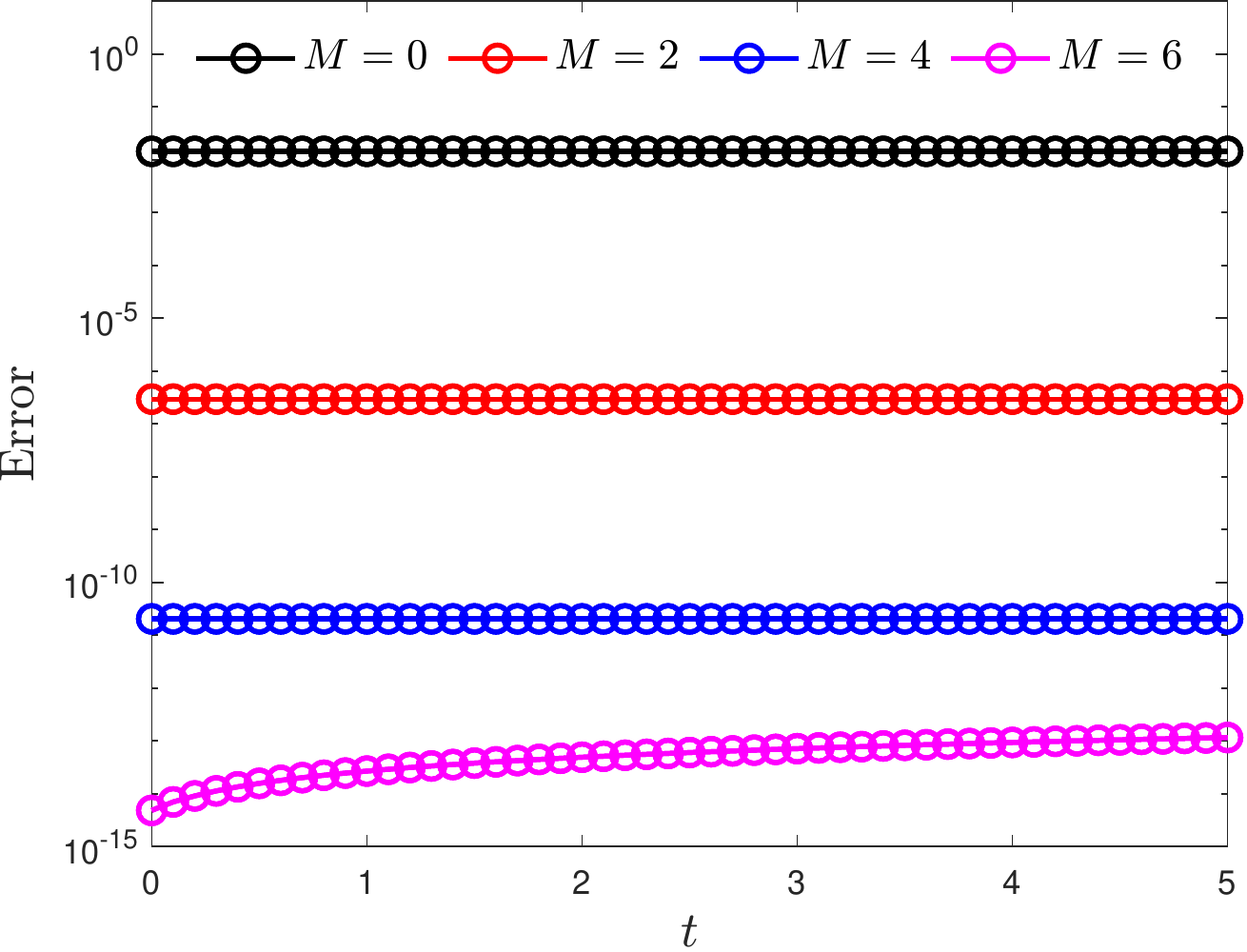}
	\caption{\small{\textbf{Test 3}. Left: $L^2$-Error in the evaluation of the temperature $T(\z)$ at fixed time $t=1$ for increasing $M$, with respect to a reference solution, for different values of $\kappa$. Right: time evolution of the same error in the time span $[0,5]$ for the case $\kappa=0.95$. We consider in both cases $N=10^6$ particles, $\Delta t=\epsilon/\rho=0.1$, and initial conditions given by \eqref{eq:initBKW}. The kernel is $D^{(3)}_*$, the scheme is Nanbu-Babovsky. Reference solution computed with $M=30$.}}
	\label{fig:test_3_sG_error_Maxwell}
\end{figure}
\begin{figure}
	\centering
	\includegraphics[width = 0.3\linewidth]{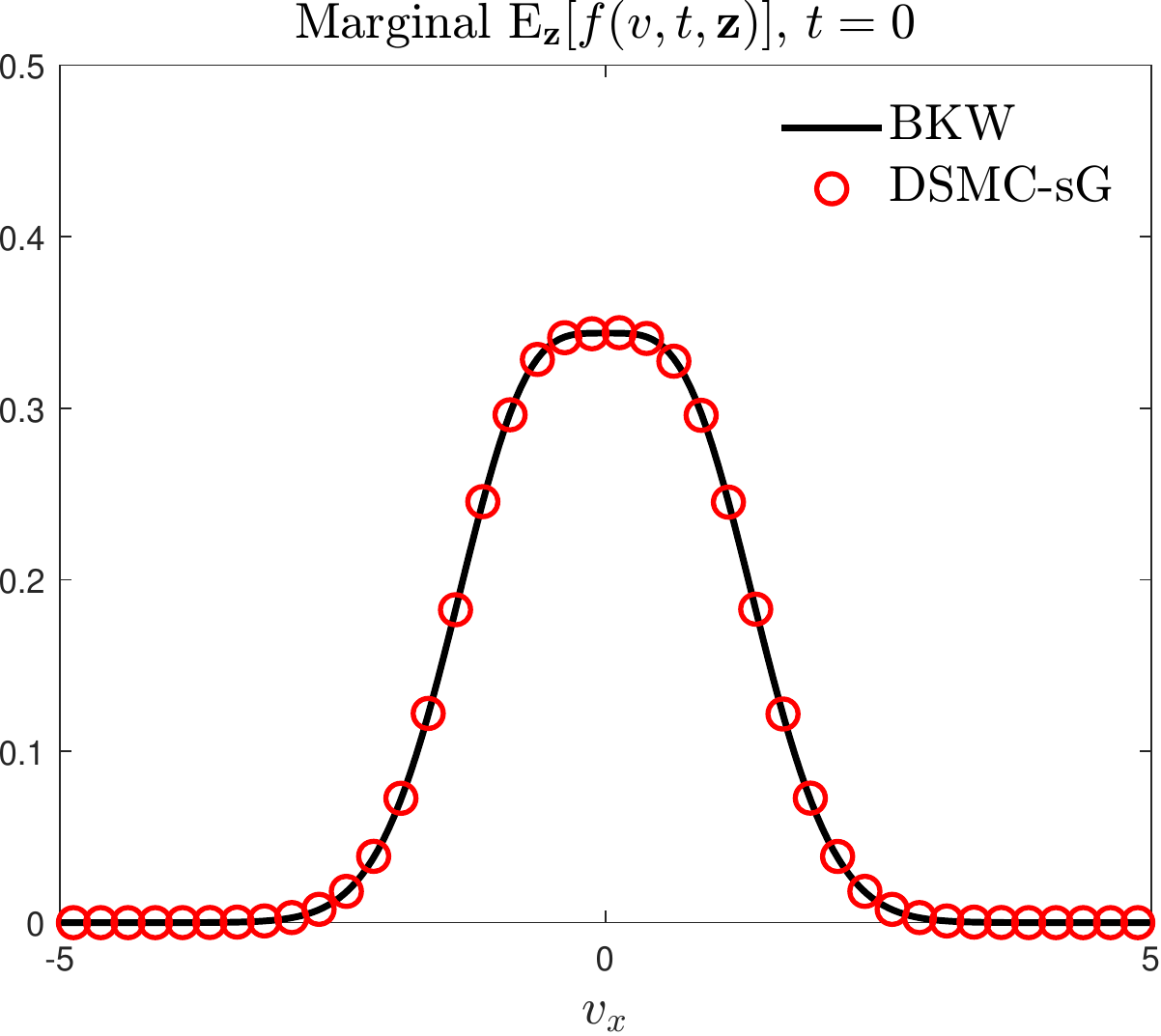}
	\includegraphics[width = 0.3\linewidth]{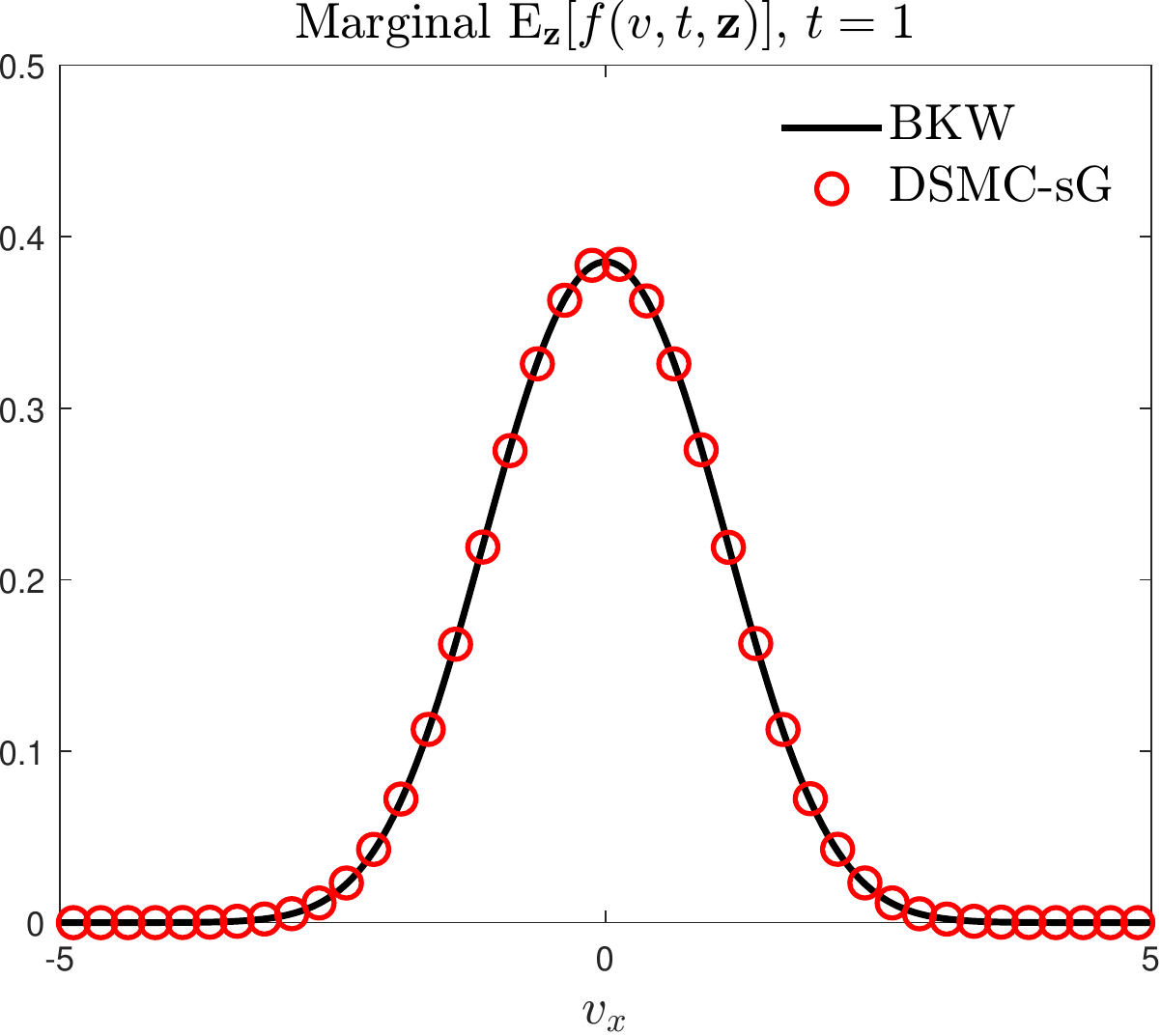}
	\includegraphics[width = 0.3\linewidth]{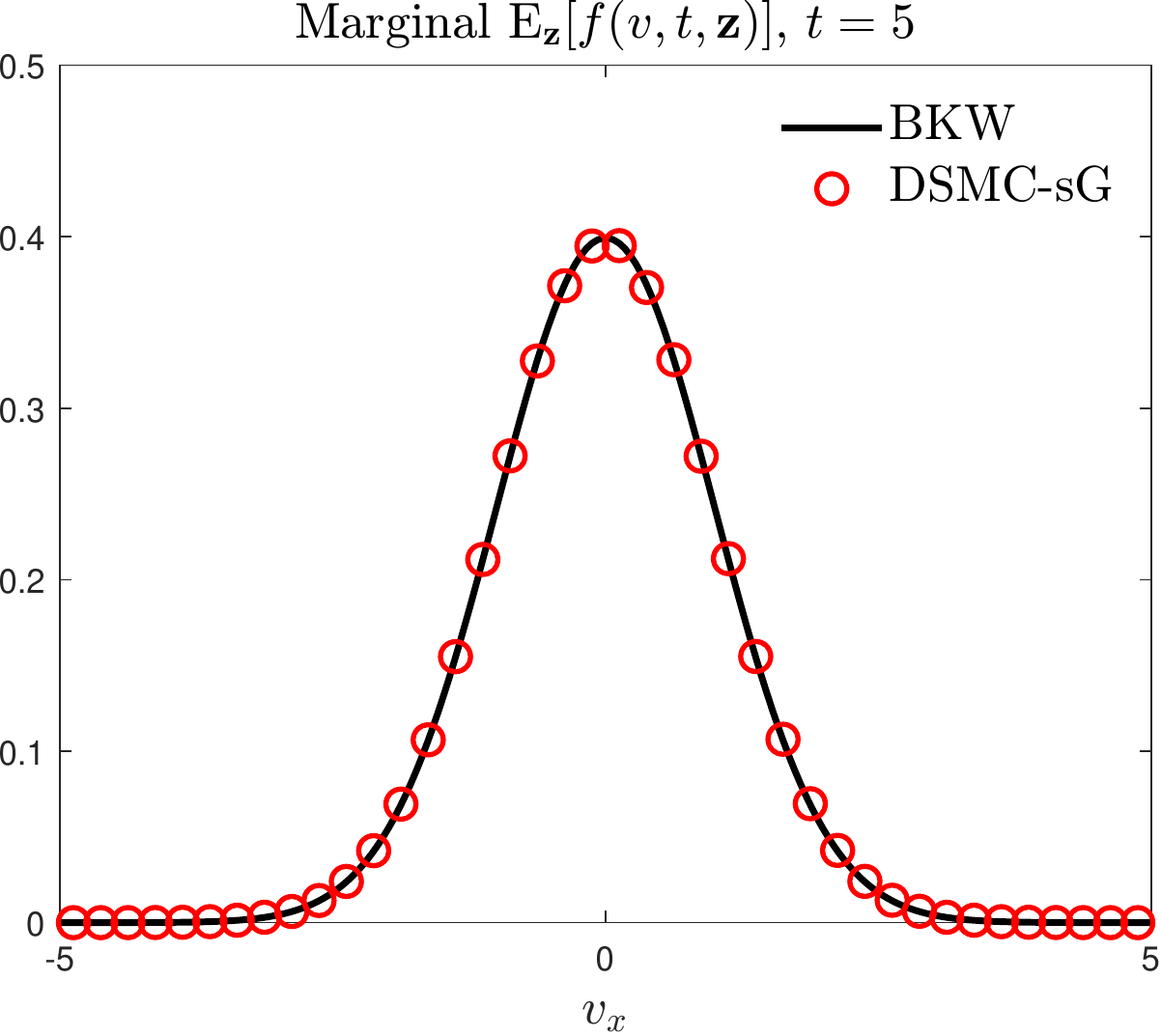}
	\includegraphics[width = 0.3\linewidth]{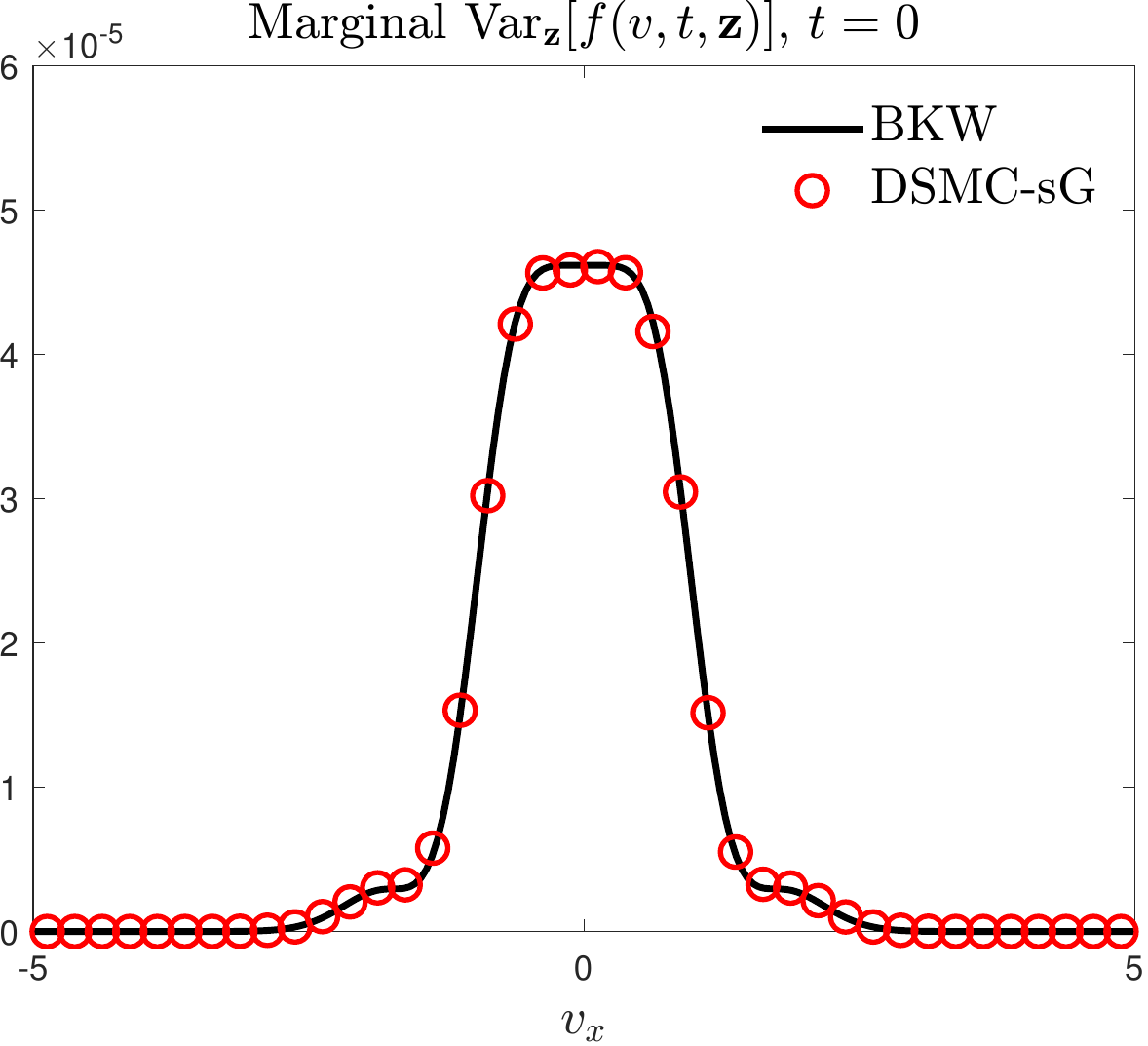}
	\includegraphics[width = 0.3\linewidth]{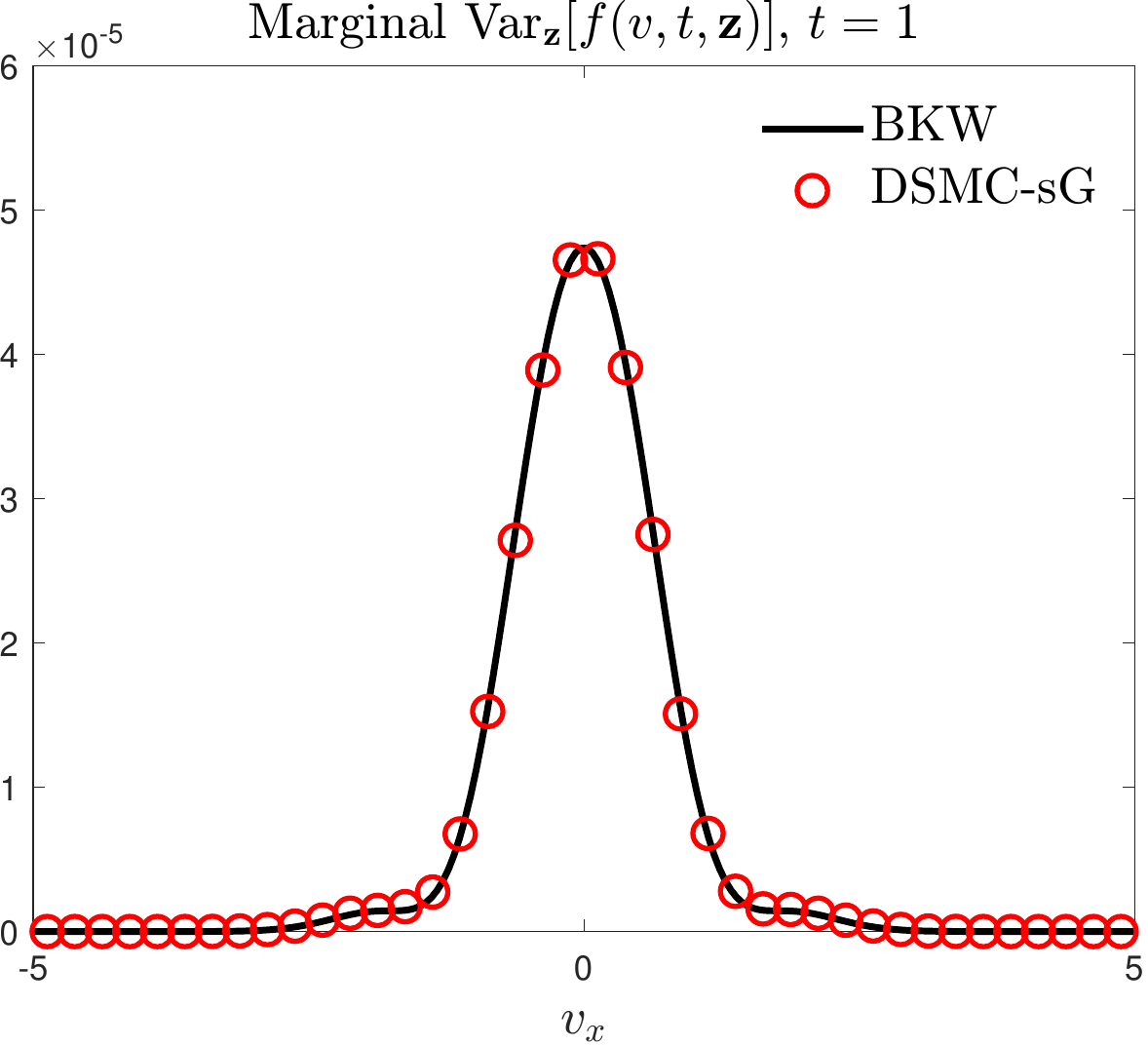}
	\includegraphics[width = 0.3\linewidth]{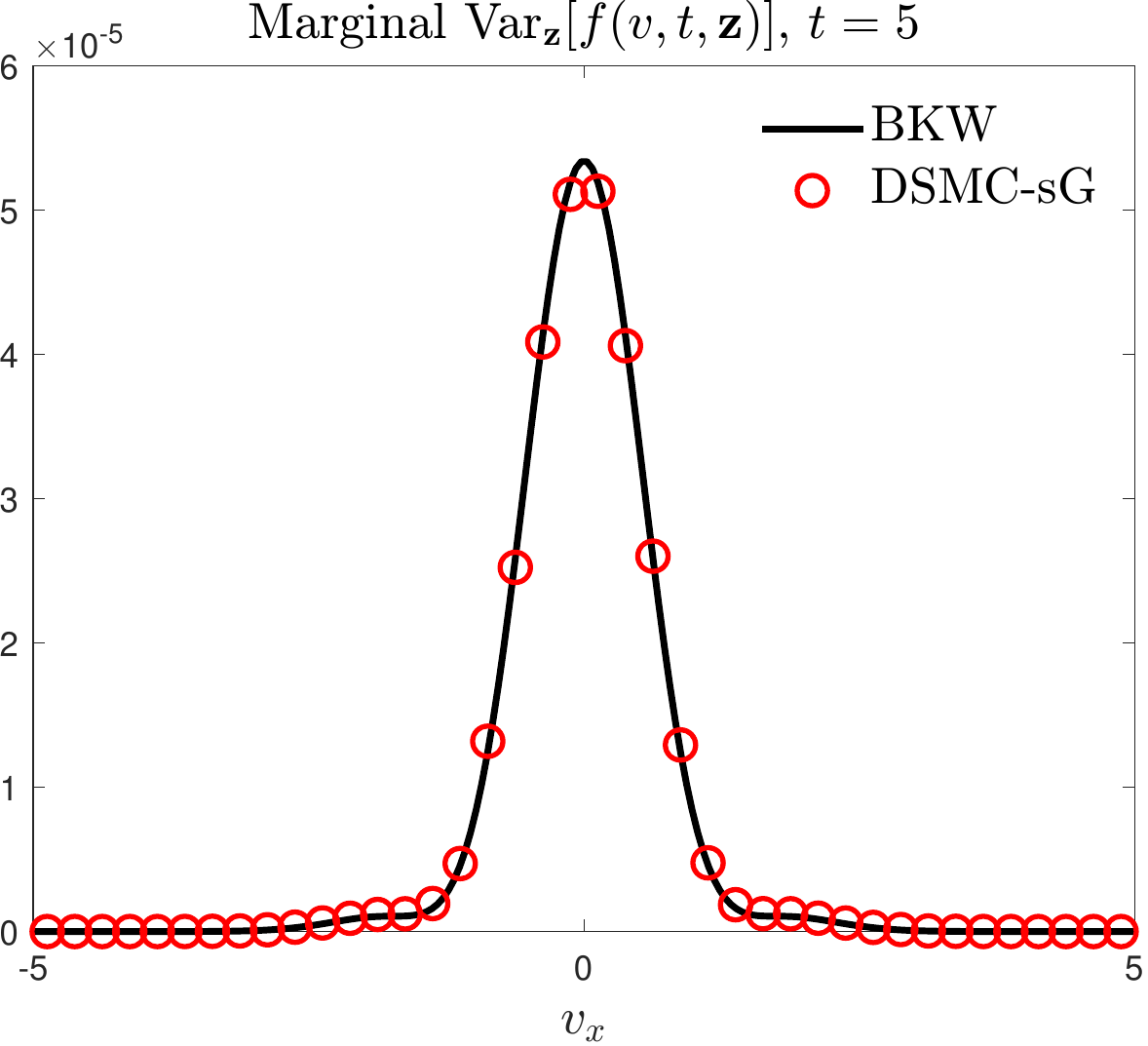}
	\caption{\small{\textbf{Test 3}. Evolution at fixed times $t=0,1,5$ of the marginal $\mathbb{E}_{\z}[f(v,t,\z)]$ and $\mathrm{Var}_{\z}[f(v,t,\z)]$ of the BKW exact solution \eqref{eq:BKW} and of the DSMC-sG approximation of the model for Maxwell molecules, with uncertainty in the initial temperature with $\kappa=0.95$ in \eqref{eq:initBKW}. We consider $N=5\times10^7$ particles, $\Delta t=\epsilon/\rho=0.1$ and $M=5$.}}
	\label{fig:test_3_BKW}
\end{figure}
\begin{figure}
	\centering
	\includegraphics[width = 0.3\linewidth]{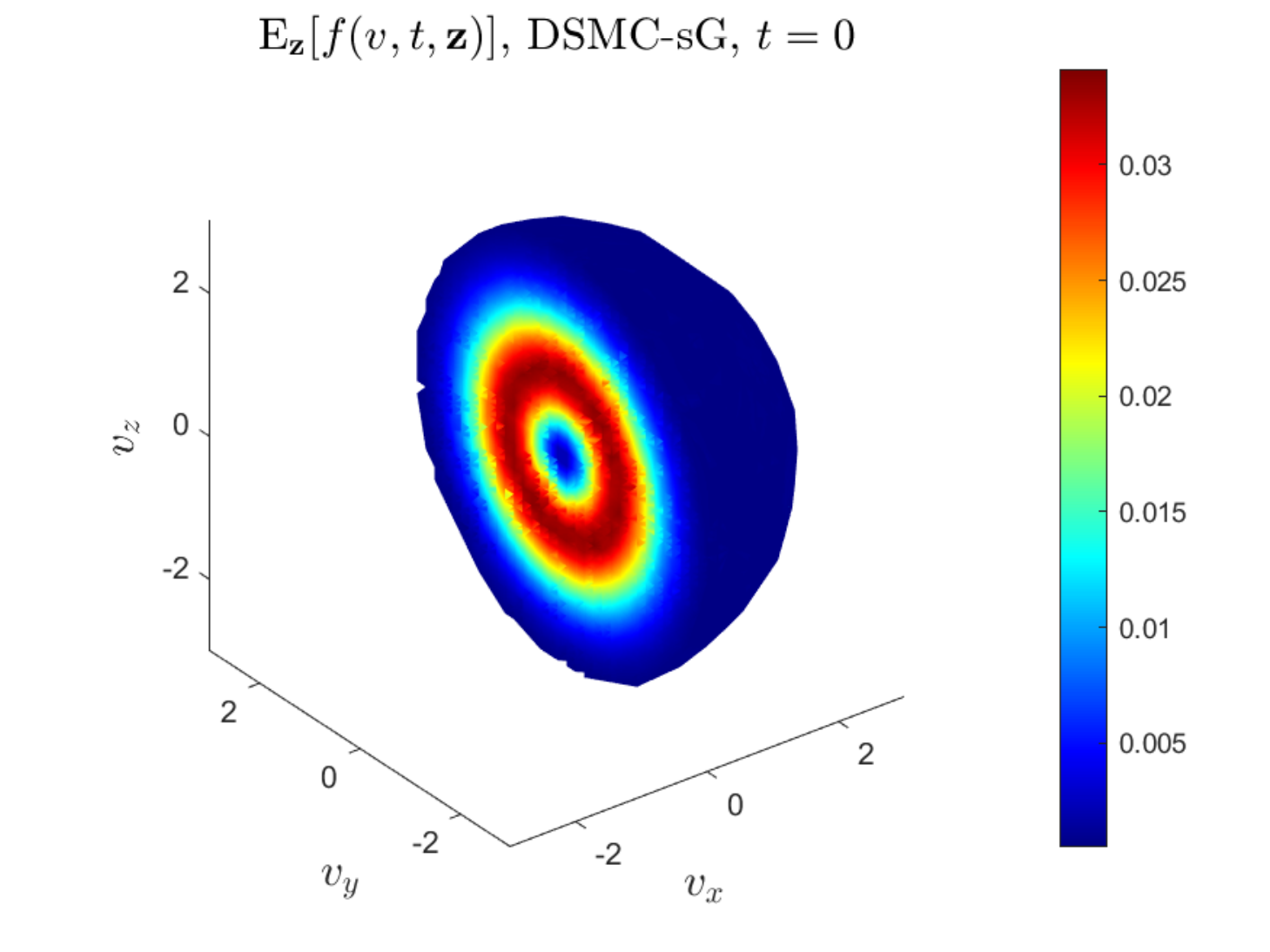}
	\includegraphics[width = 0.3\linewidth]{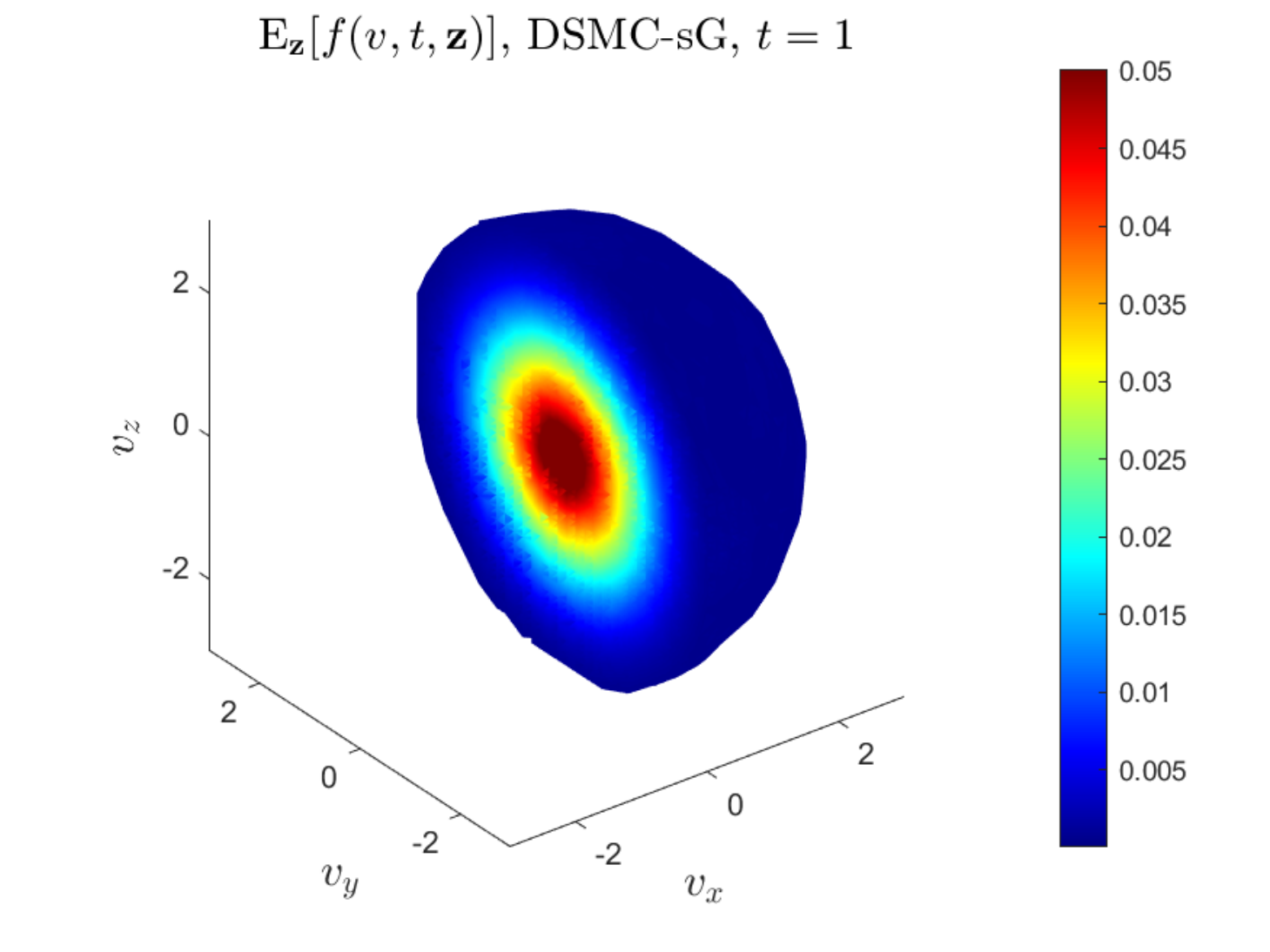}
	\includegraphics[width = 0.3\linewidth]{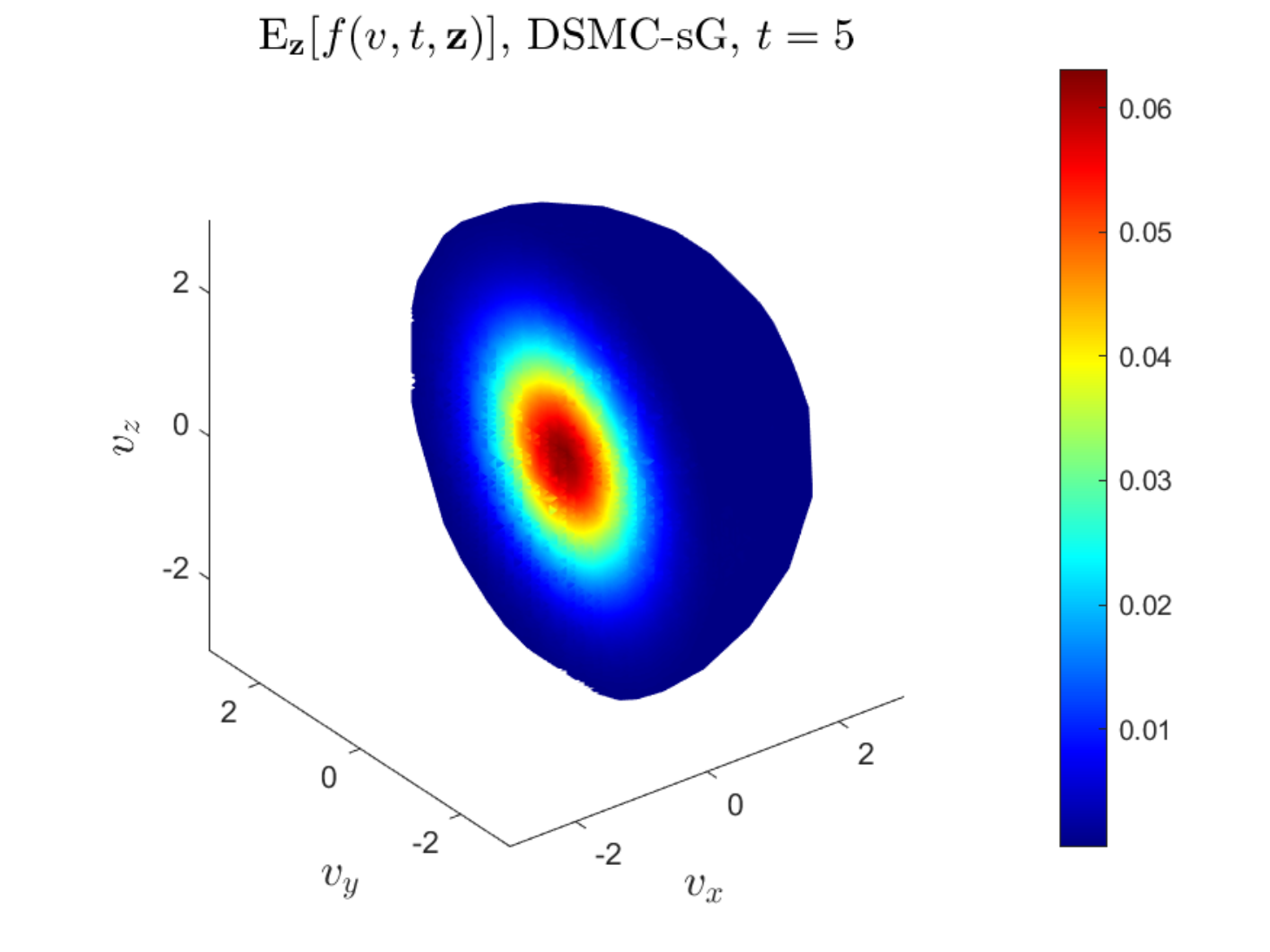}
	\includegraphics[width = 0.3\linewidth]{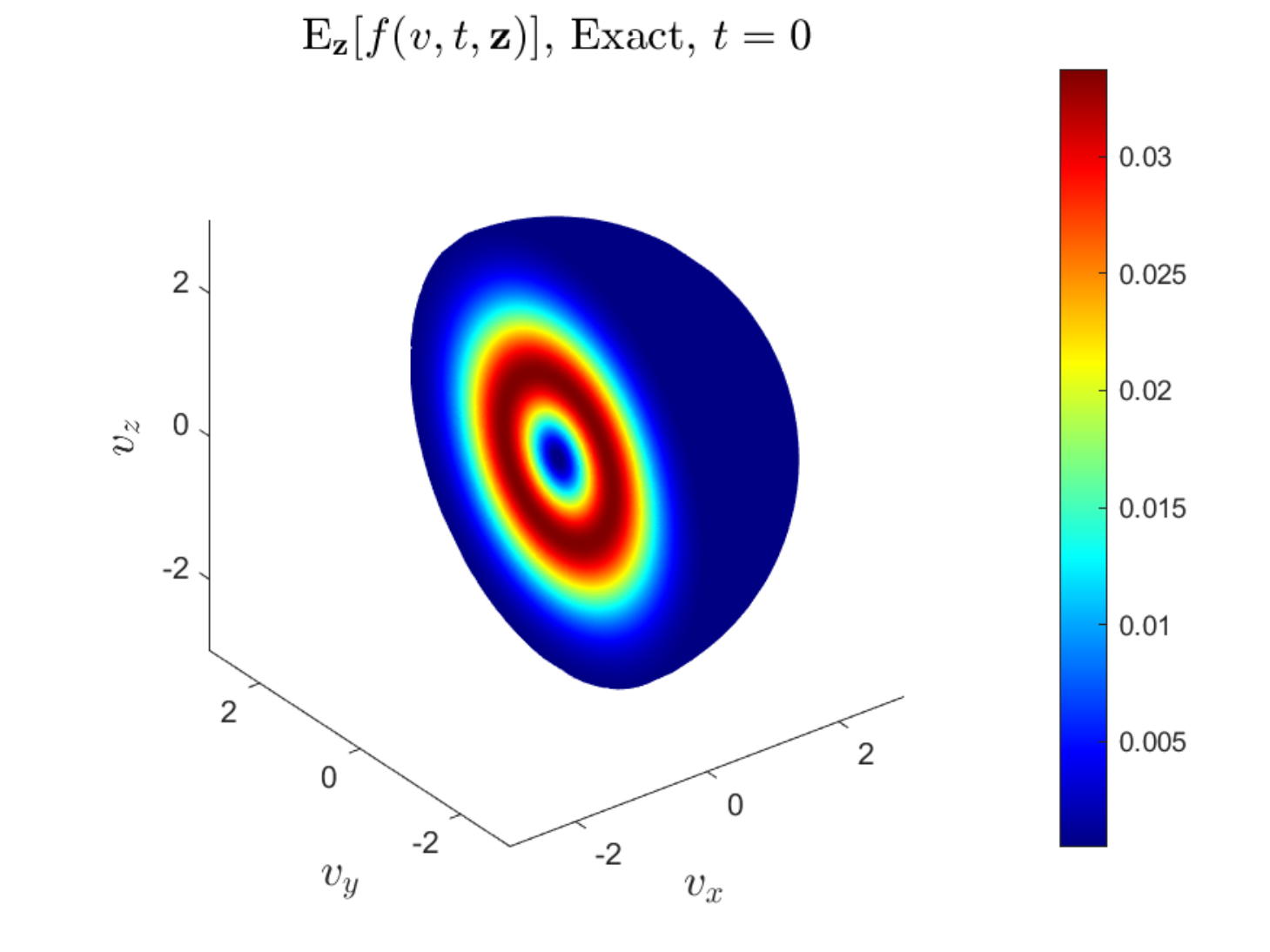}
	\includegraphics[width = 0.3\linewidth]{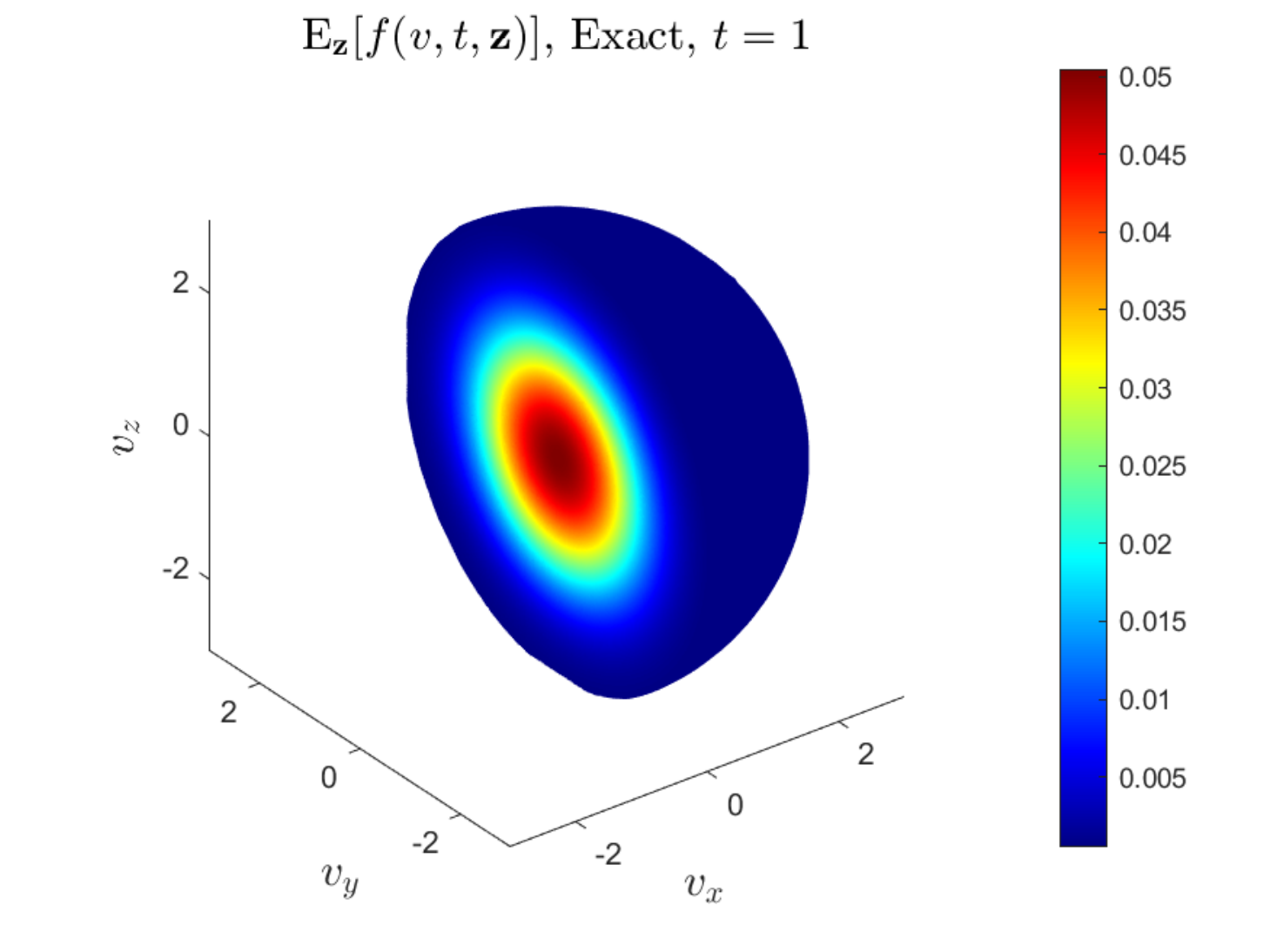}
	\includegraphics[width = 0.3\linewidth]{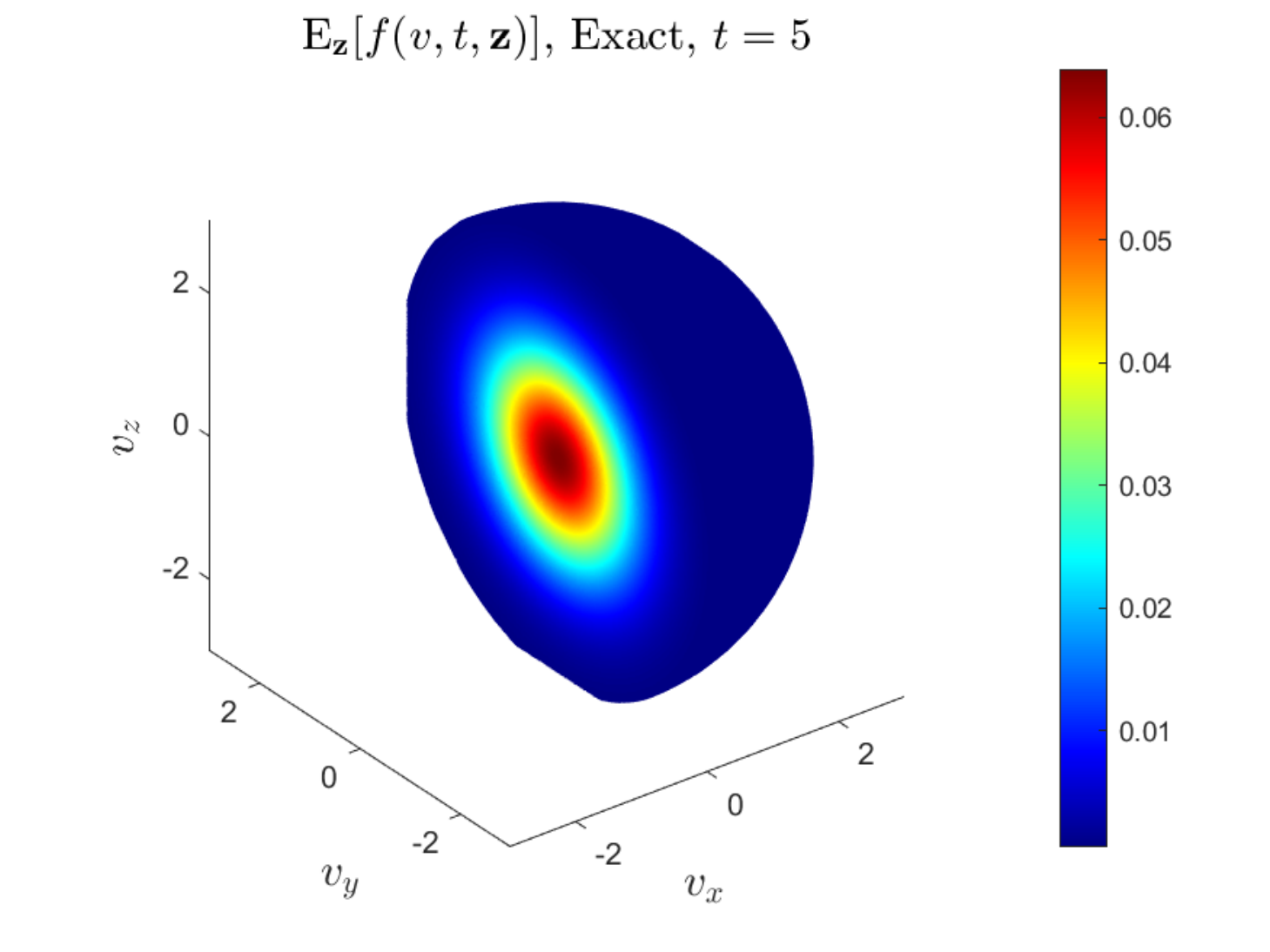}
	\caption{\small{\textbf{Test 3}. Slices of the expected distribution $\mathbb{E}_{\z}[f(v,t,\z)]$ at fixed times $t=0,1,5$, for the Maxwellian collision model with uncertainties. Upper row: DSMC-sG solution obtained with $N=5\times10^7$ particles, $\Delta t=\epsilon/\rho=0.1$, $M=5$, $D^{(3)}_*$, and the Nanbu-Babovsky scheme. Lower row: exact BKW solution. }}
	\label{fig:test_3_BKW_f3D}
\end{figure}
\begin{figure}
	\centering
	\includegraphics[width = 0.3\linewidth]{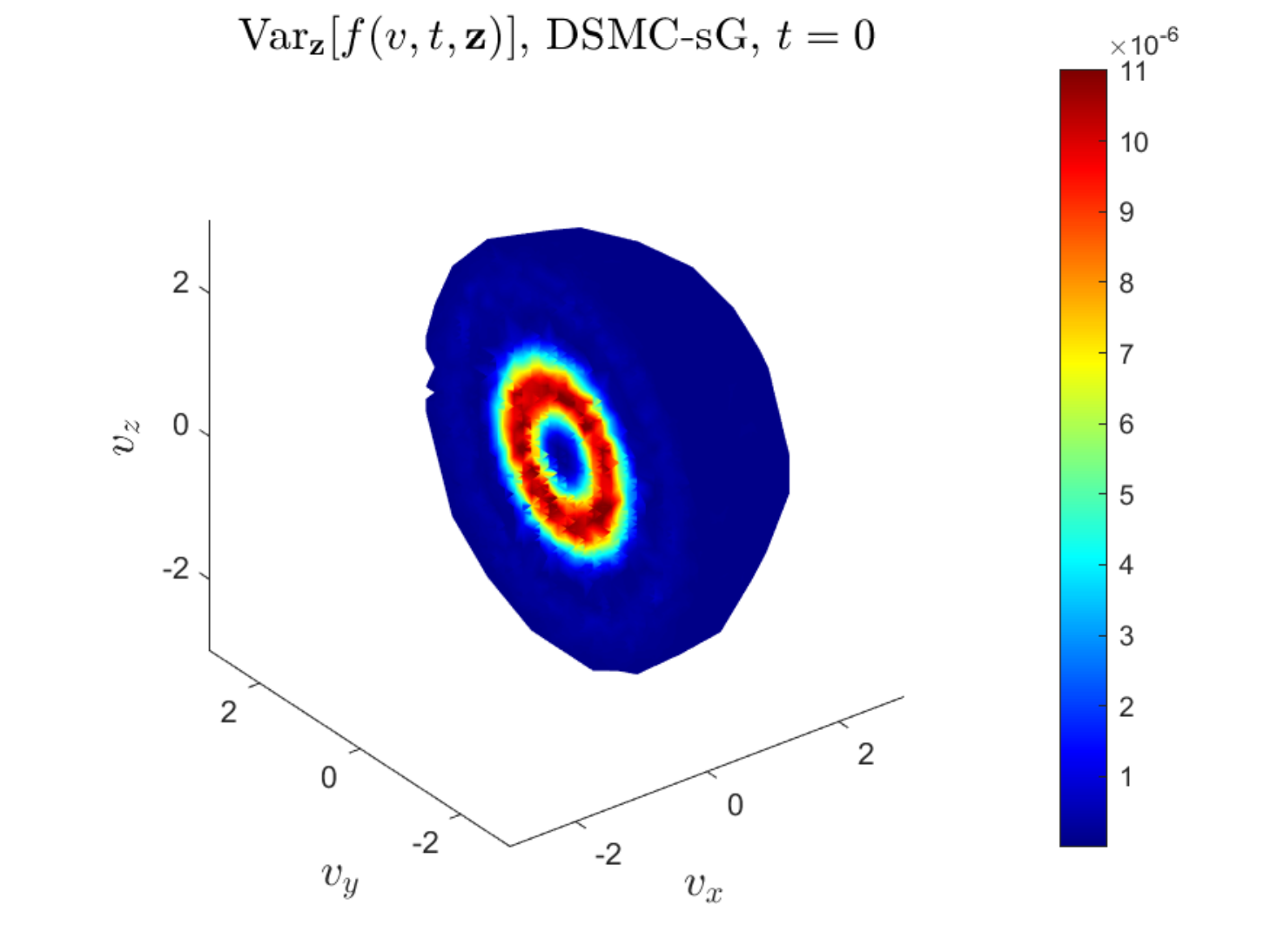}
	\includegraphics[width = 0.3\linewidth]{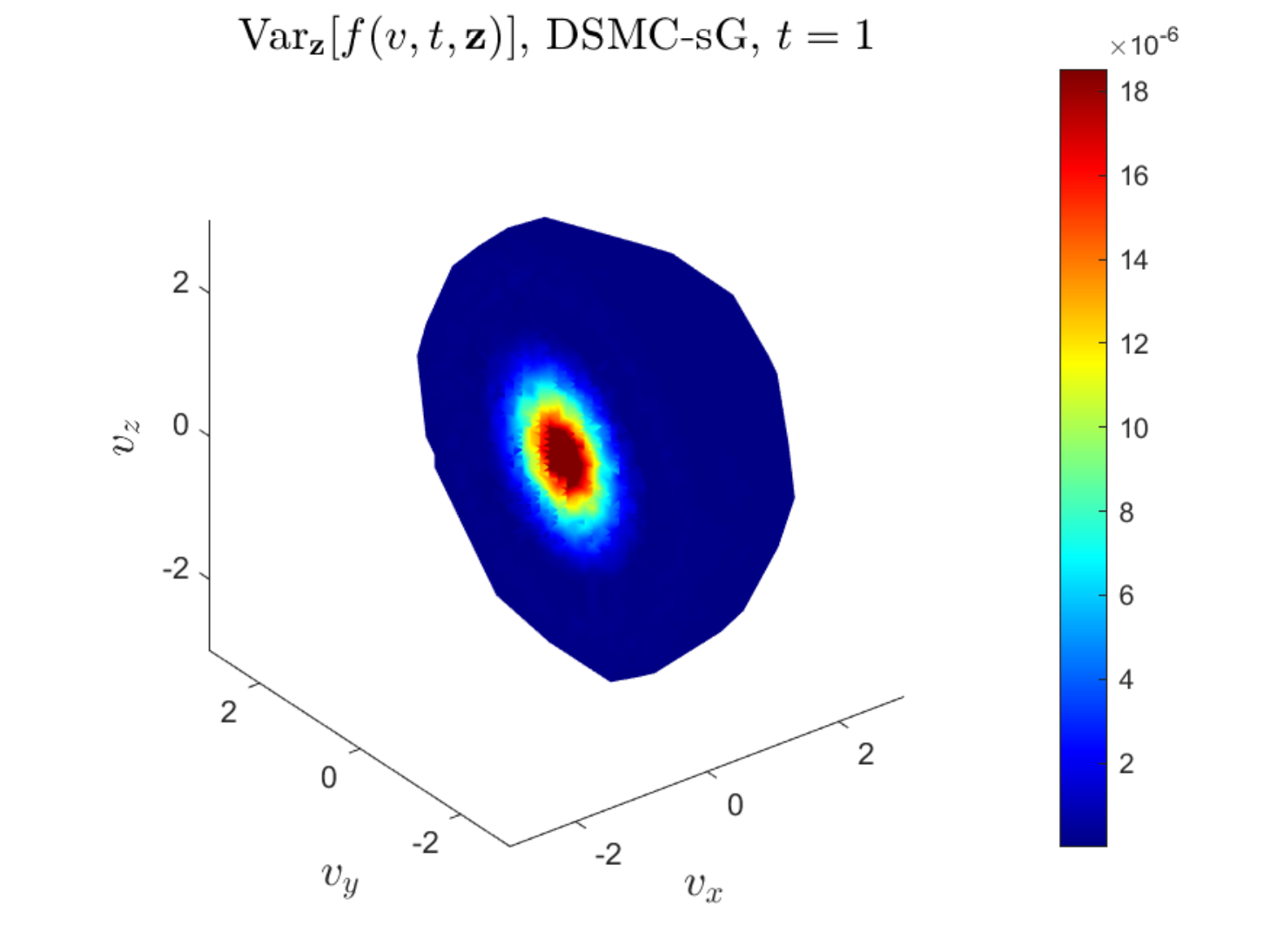}
	\includegraphics[width = 0.3\linewidth]{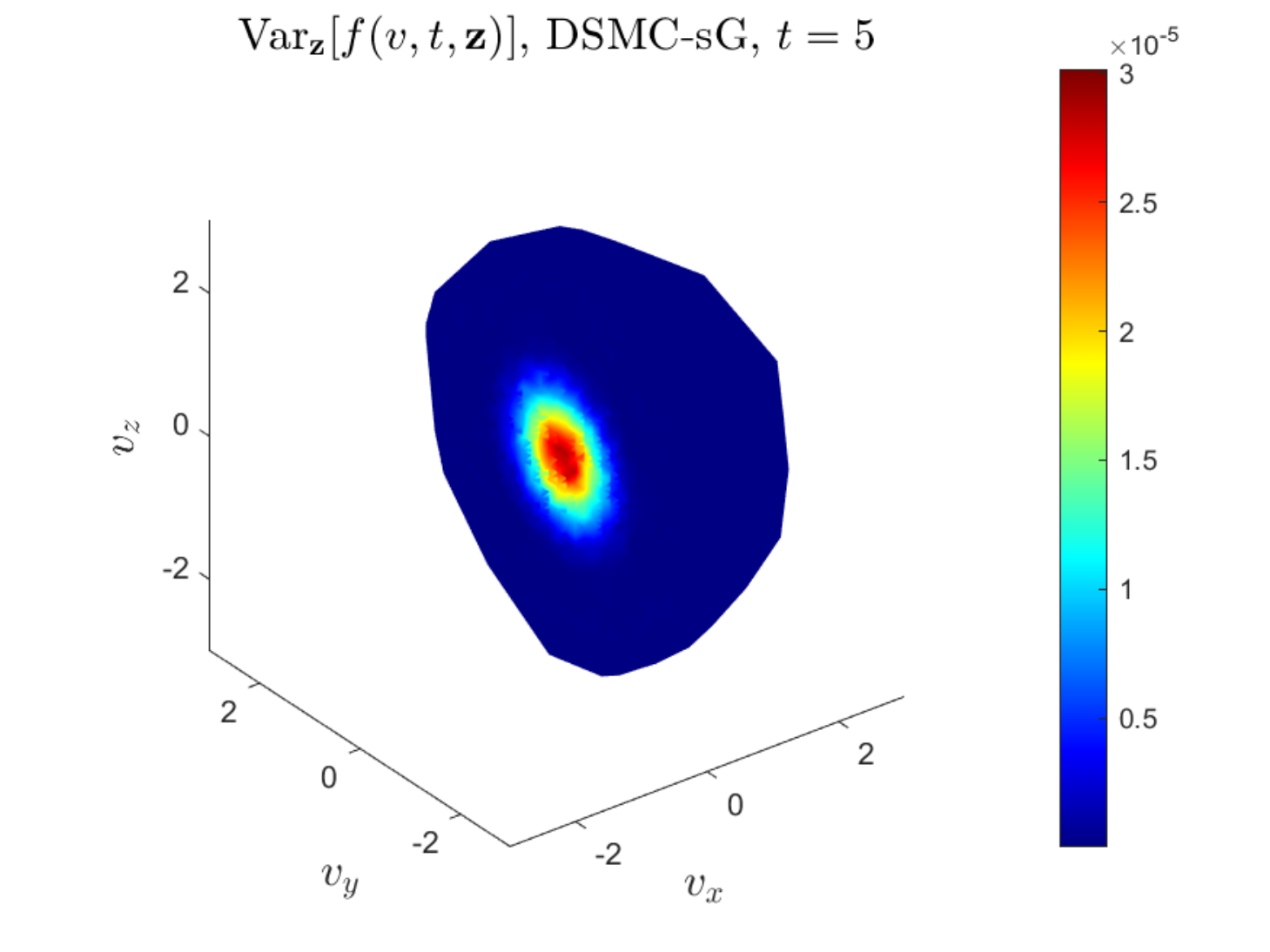}
	\includegraphics[width = 0.3\linewidth]{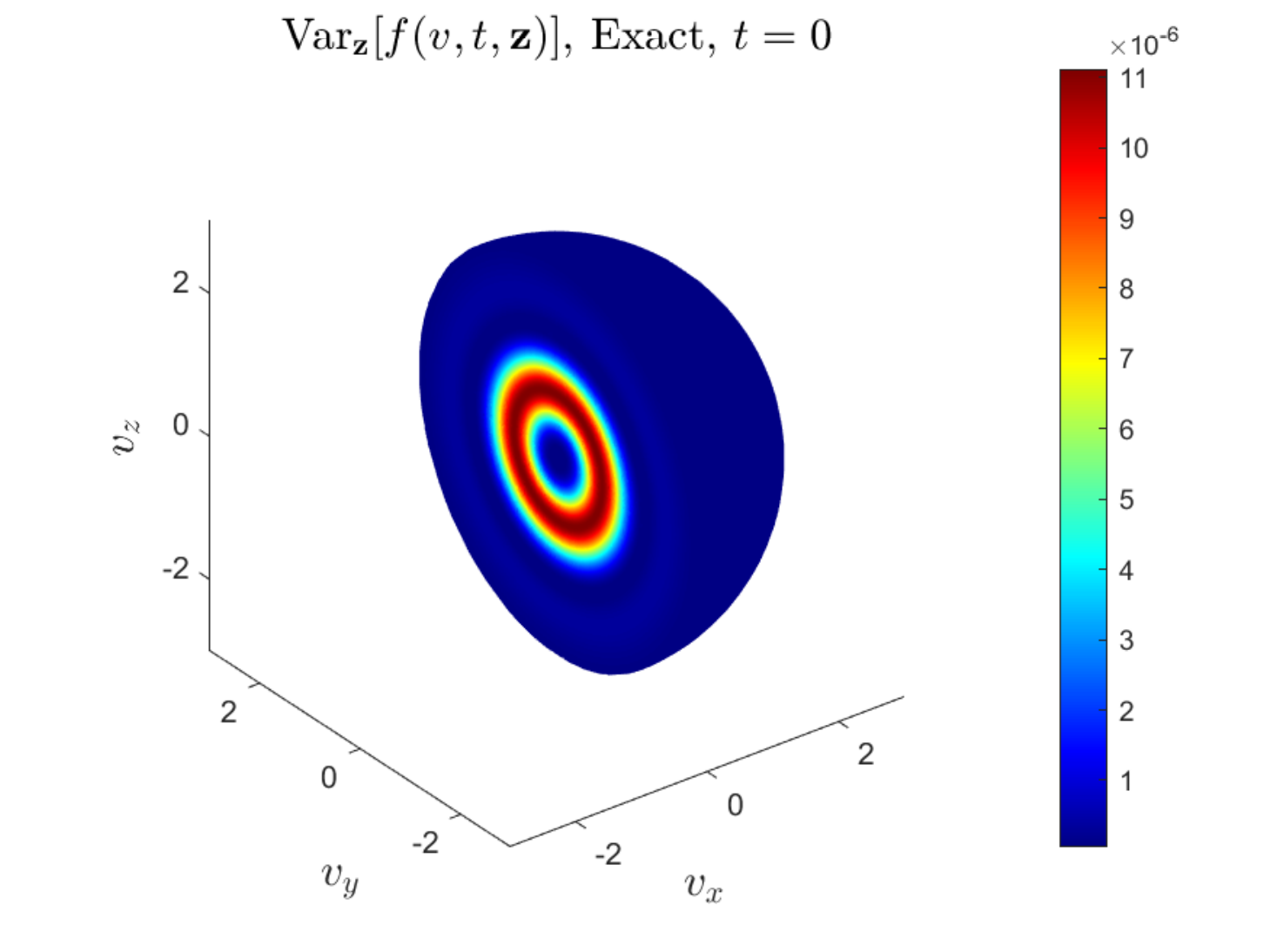}
	\includegraphics[width = 0.3\linewidth]{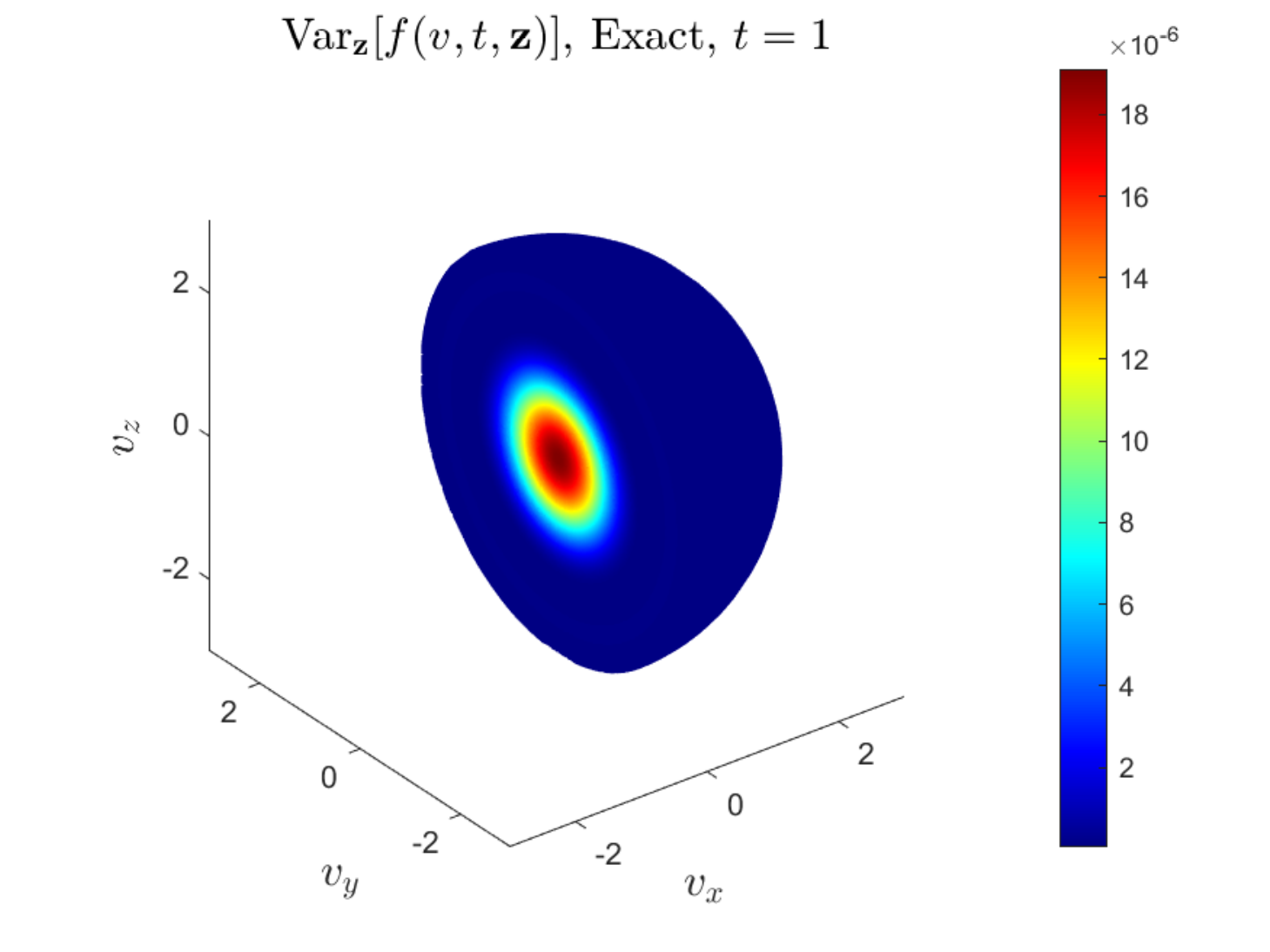}
	\includegraphics[width = 0.3\linewidth]{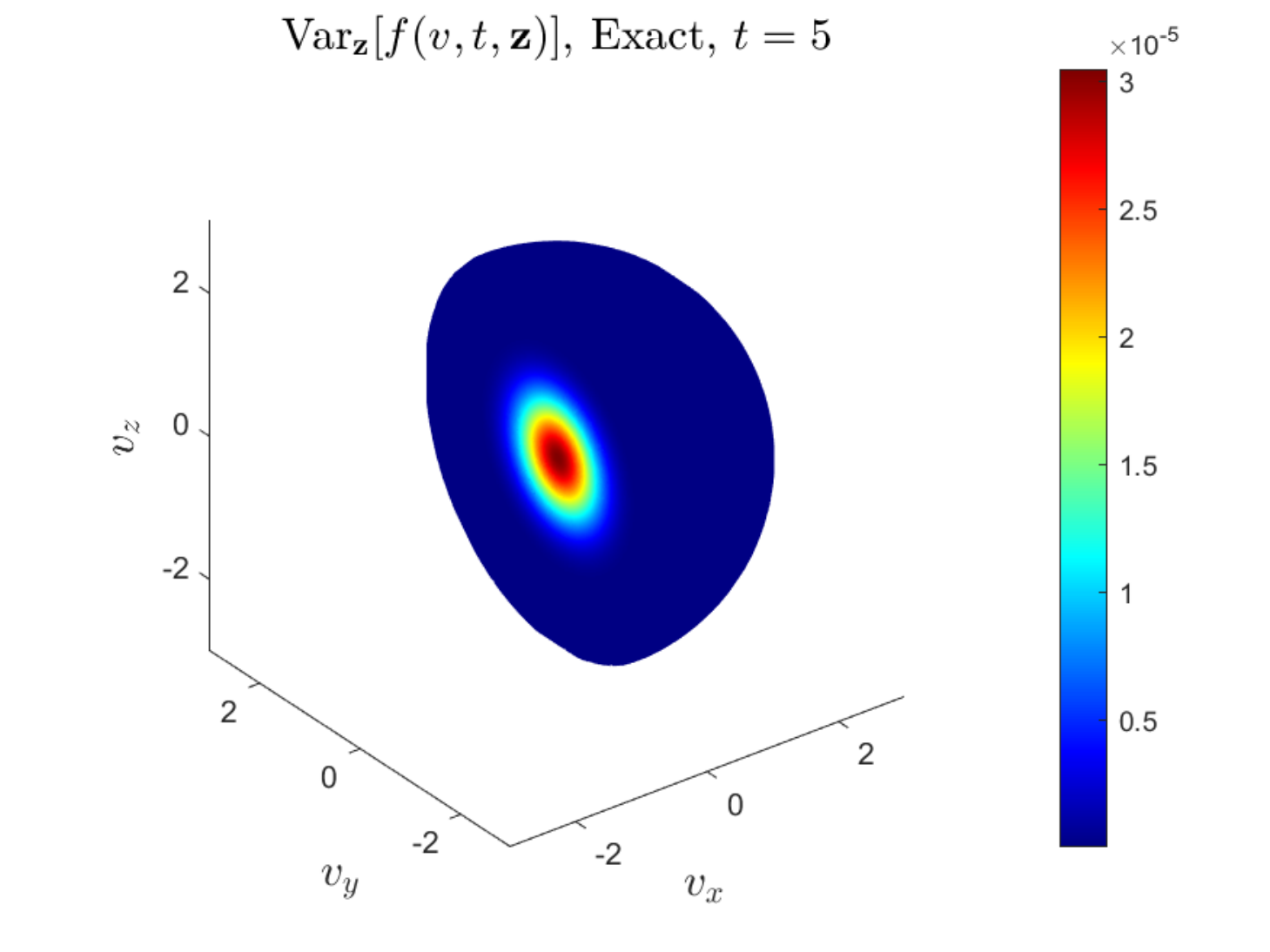}
	\caption{\small{\textbf{Test 3}. Slices of the variance $\mathrm{Var}_{\z}[f(v,t,\z)]$ at fixed times $t=0,1,5$, for the Maxwellian collision model with uncertainties. Upper row: DSMC-sG solution obtained with $N=5\times10^7$ particles, $\Delta t=\epsilon/\rho=0.1$, $M=5$, $D^{(3)}_*$, and the Nanbu-Babovsky scheme. Lower row: exact BKW solution. }}
	\label{fig:test_3_BKW-Var3D}
\end{figure}
We consider here the model with Maxwell molecules and uncertainties in the initial temperature $T(\z)$, i.e.,
\be\label{eq:BKW}
f(v,t,\z) = \frac{1}{(2\pi K(t,\z))^{3/2}} e^{-\frac{|v|^2}{2K(t,\z)}} \left(\frac{5K(t,\z) - 3 T(\z)}{2K(t,\z)} + \frac{T(\z)-K(t,\z)}{2K(t,\z)^2}|v|^2\right),
\ee
with
\[
\textcolor{black}{K(t,\z) = T(\z) \left( 1 - \frac{2}{5}e^{-t/2} \right),}
\]
and fourth order moment given by
\be
\textrm{M}4(t) = 9 K(t,\z) (2T(\z) -K(t,\z)).
\ee

We initialize the particles in the DSMC-sG scheme by sampling the initial conditions given by
\be\label{eq:initBKW}
f^0(v,\z)=f(v,t=0,\z) \quad \textrm{with} \quad T(\z) = \kappa + 0.1 \z, \quad \z\sim\mathcal{U}([0,1]).
\ee
In all the tests, we adopt the Nanbu-Babovsky scheme given by Algorithm \ref{NB_unc} and the kernel $D_*^{(3)}$, with $\Delta t = \epsilon/\rho=0.1$. 

In Figure \ref{fig:test_3_sG_error_Maxwell} we show the stochastic Galerkin error for increasing values of $M$ (left) and its time evolution at fixed $M$ (right). We compute a reference solution with $M=30$ and we store the collisional tree. Then, for different values of $M$, we compute the $L^2$-Error of the temperature $T(\z)$ with respect to the reference solution. We notice that the machine accuracy is reached with a finite number of modes, and the error is constant in time. 

In Figures \ref{fig:test_3_BKW}-\ref{fig:test_3_BKW_f3D}-\ref{fig:test_3_BKW-Var3D} we may see the comparison between the BKW exact solution given by \eqref{eq:BKW} and the DSMC-sG approximation. We choose $N=5\times 10^7$ particles and a stochastic Galerkin expansion up to order $M=5$. In particular, in Figure \ref{fig:test_3_BKW} we display at fixed times $t=0,1,5$ the marginals of $\mathbb{E}_{\z}[f(v,t,\z)]$ and $\mathrm{Var}_{\z}[f(v,t,\z)]$. We may notice the accordance between the DSMC-sG approximation (red circles) and the exact BKW solution (solid black lines). In Figures \ref{fig:test_3_BKW_f3D}-\ref{fig:test_3_BKW-Var3D} the show at the fixed times $t=0,1,5$ the slices of the three dimensional plots of $\mathbb{E}_{\z}[f(v,t,\z)]$ and $\mathrm{Var}_{\z}[f(v,t,\z)]$ respectively. In both the figures, the top row is the DSMC-sG approximation, while the bottom row is the exact BKW solution. We observe again a good agreement.

\subsection{Test 4: Coulomb case with uncertainties}
We consider now the Coulomb case with uncertainties, i.e., scattering cross section given by \eqref{eq:coulomb} and $\gamma=-3$ in $\tau_0$, that is, \eqref{eq:tau0_c}.
\paragraph{Stochastic Galerkin error}
\begin{figure}
	\centering
	\includegraphics[width = 0.5\linewidth]{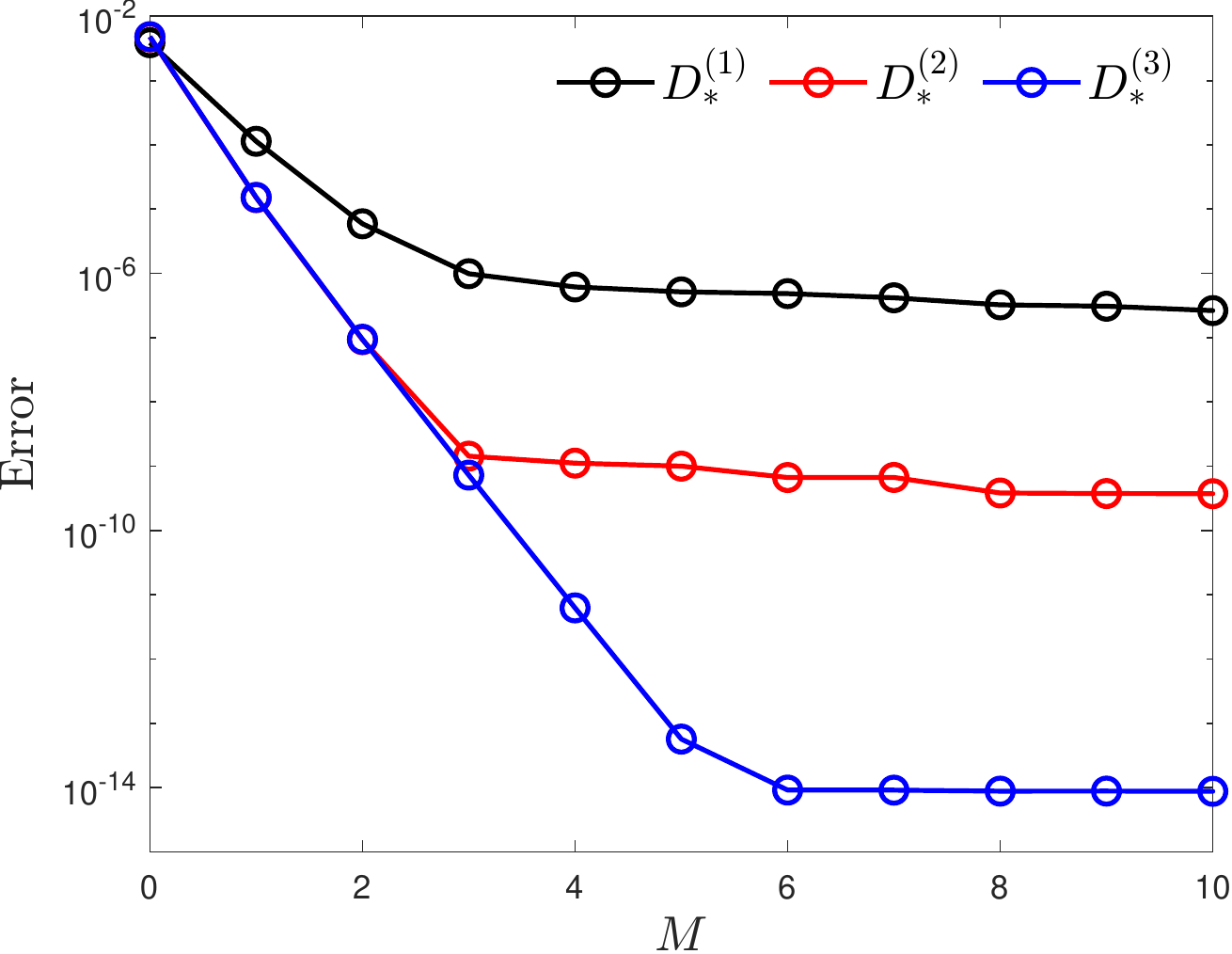}
	\caption{\small{\textbf{Test 4}. Comparison of the $L^2$ error in the evaluation of the temperature $T(\z)$ for different kernels, for increasing $M$, with the NB scheme. We choose $N=10^6$, $t=1$, $\Delta t=\epsilon/\rho=0.1$ and the reference solution is computed with an order $M=30$.}}
	\label{fig:test_4_sG_error}
\end{figure}
We are interested first in evaluating the convergence of the scheme in the space of the random parameters. To this aim, we consider the initial conditions $f_0(v,\z)=f(v,0,\z)$
\[
f_0(v,\z) = \frac{1}{\left(2\pi T(\z)\right)^{3/2}} \left(e^{-\frac{v^2_x}{2 T^0_x(\z)}}+e^{-\frac{v^2_y}{2 T^0_y}}+e^{-\frac{v^2_z}{2 T^0_z}}\right),
\]
where the total temperature, conserved in time, is defined as
\[
T(\z)=\frac{1}{3}\left(T_x(\z,t)+T_y(\z,t)+T_z(\z,t)\right)
\]
with uncertain initial conditions in the temperature along the $x$ axis
\[
\begin{split}
	& T^0_x(\z) = T_x(\z,0) = 1 + 0.05 \z \\
	& T^0_y = T_y(0) = 0.75 \\
	& \textcolor{black}{T^0_z = T_z(0) = 0.75} 
\end{split}
\]
and $\z\sim\mathcal{U}([0,1])$. We choose $\Delta t=0.1$, in a way that $\epsilon=\rho\Delta t=0.1$, and $N=10^6$ particles. We consider a reference solution obtained with $M=30$ up to time $t=1$ and we store both the initial data and the collisional tree. Then, we compute the $L^2$ error in the evaluation of the total temperature $T(\z)$ for increasing $M$. The results obtained with the Nanbu-Babovky (NB) and the Bird (B) scheme are similar, therefore we show only the NB results. 

In Figure \ref{fig:test_4_sG_error} we show the decay of the $L^2$ error in the evaluation of the temperature $T(\z)$, for the kernels $D^{(1)}_*$, $D^{(2)}_*$, and $D^{(3)}_*$. We observe that the error convergence deteriorates for $D^{(1)}_*$, since we need to introduce a cut-off for the numerical resolution of the nonlinear equation \eqref{eq:nonlineq}, and for $D^{(2)}_*$, because the function $\nu(\tau_0)$ is not differentiable for $\tau_0=1$. With the choice $D^{(3)}_*$, we reach the machine accuracy with a small order $M$. 

\paragraph{Trubnikov test}
\begin{figure}
	\centering
	\includegraphics[width = 0.3\linewidth]{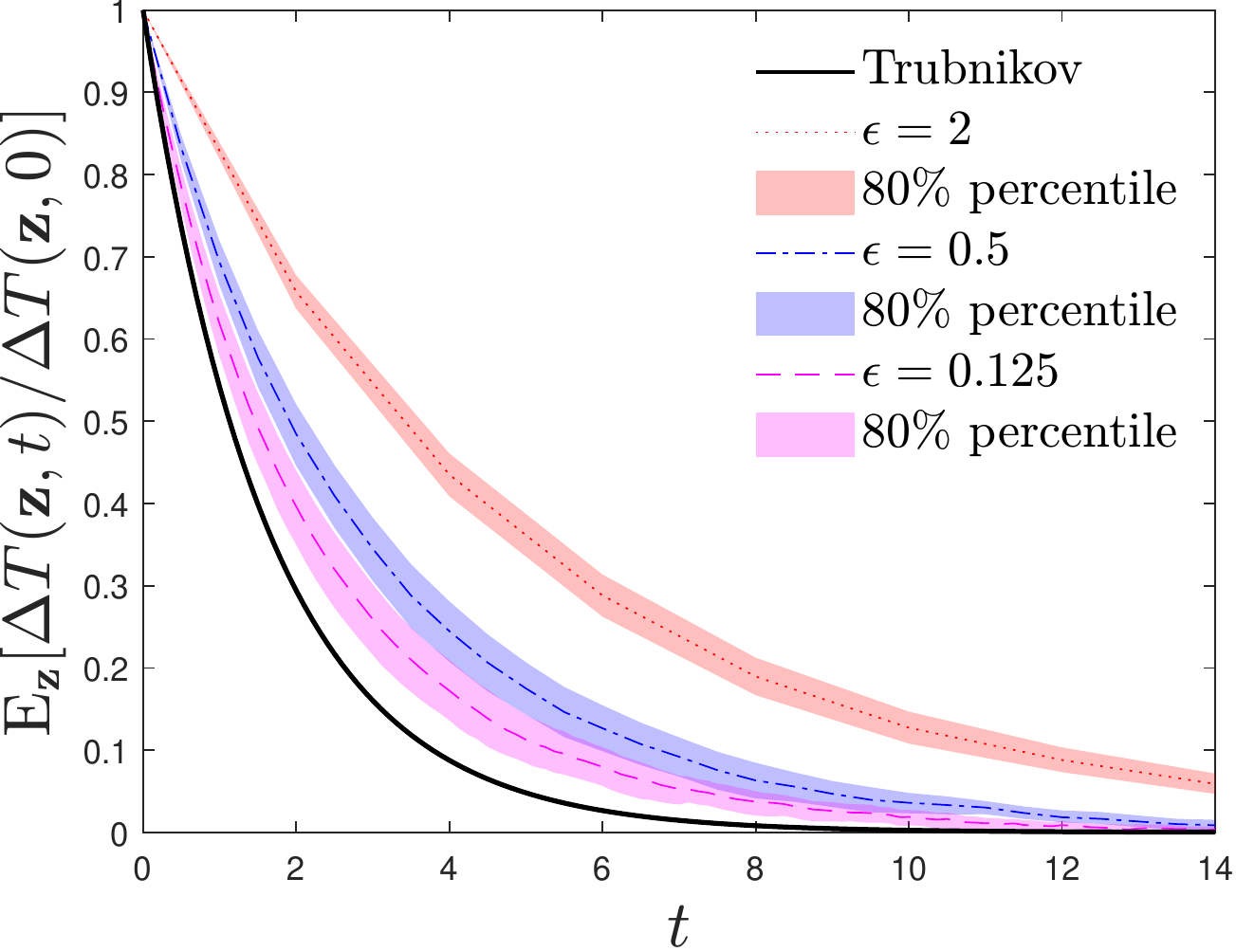}
	\includegraphics[width = 0.3\linewidth]{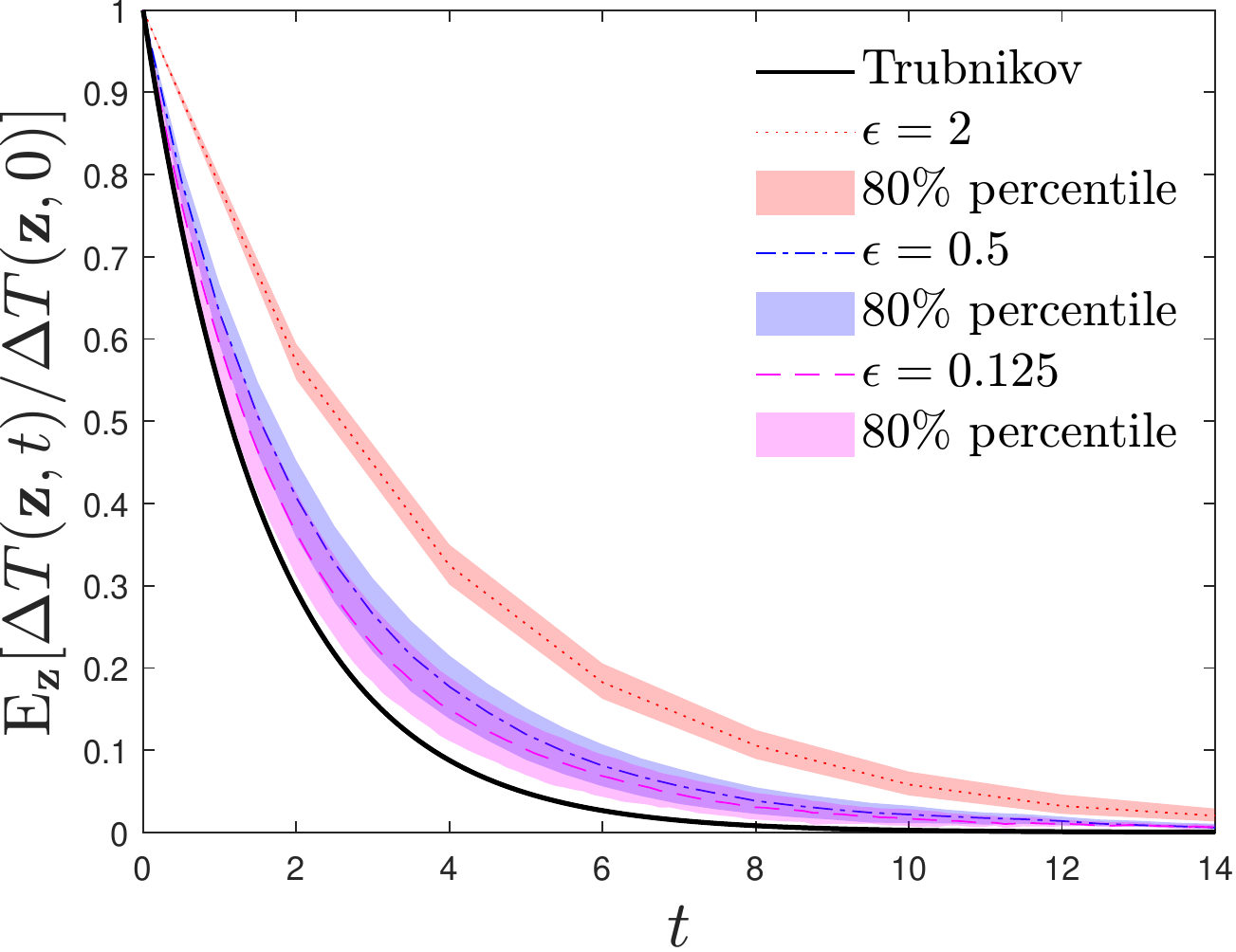}
	\includegraphics[width = 0.3\linewidth]{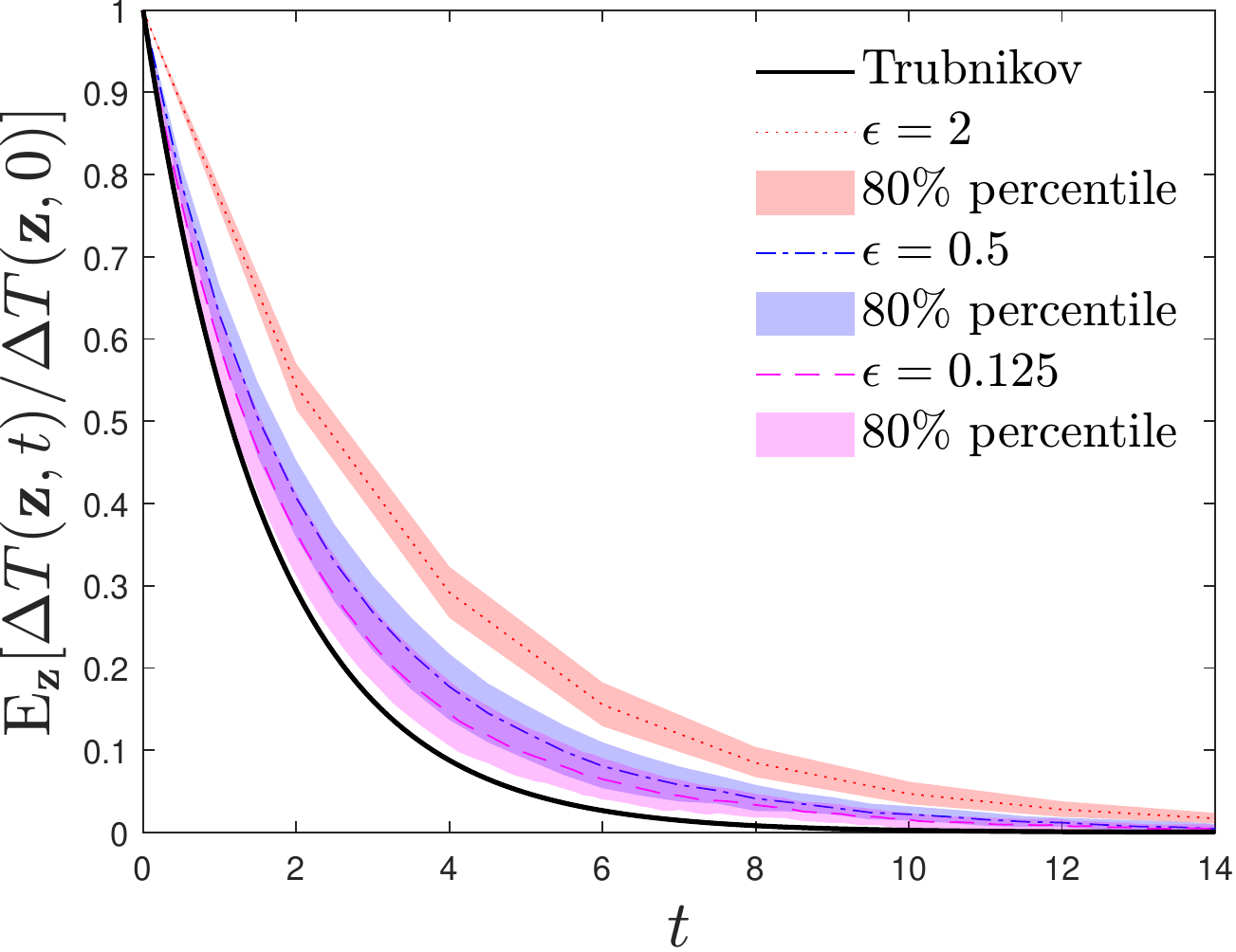}
	\includegraphics[width = 0.3\linewidth]{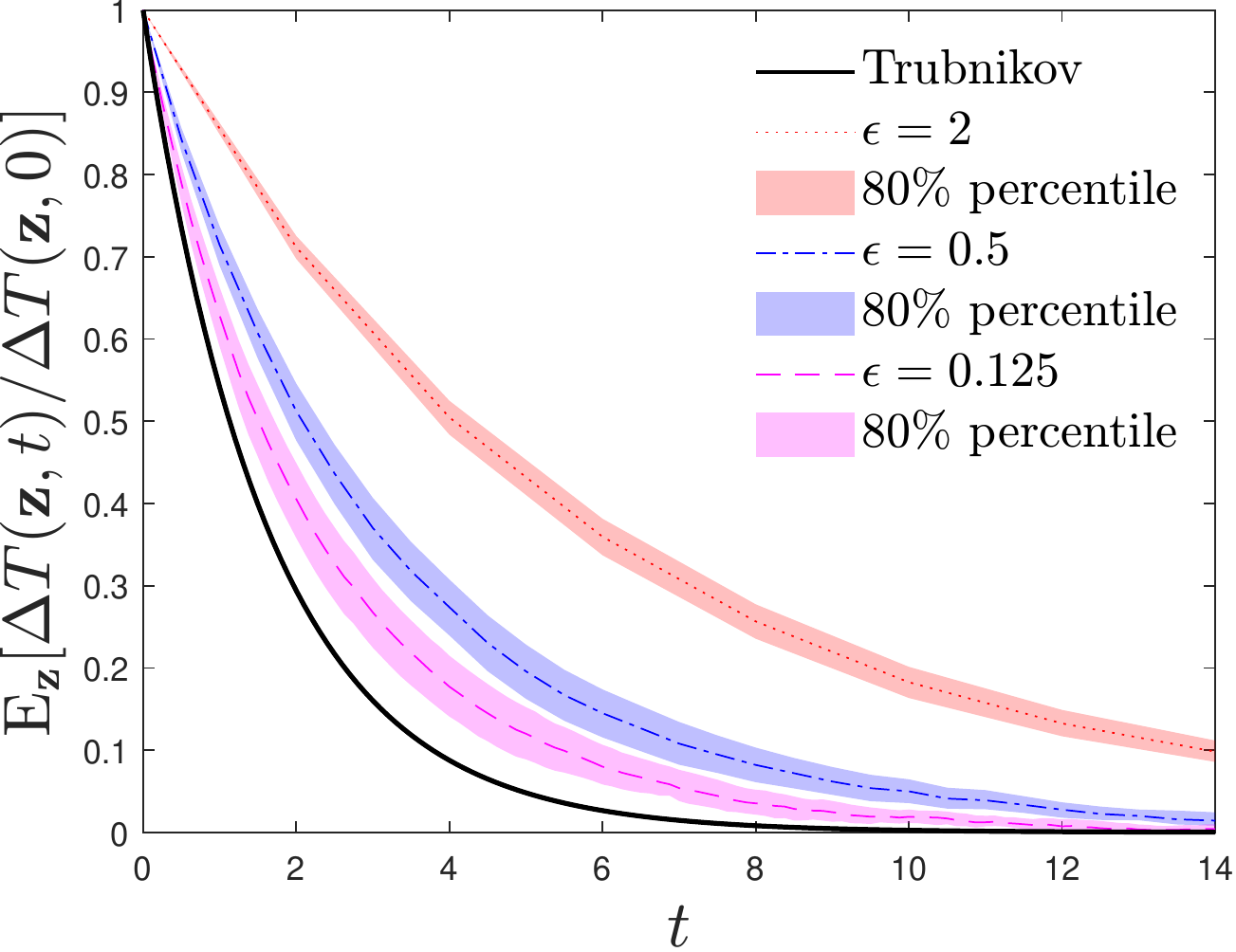}
	\includegraphics[width = 0.3\linewidth]{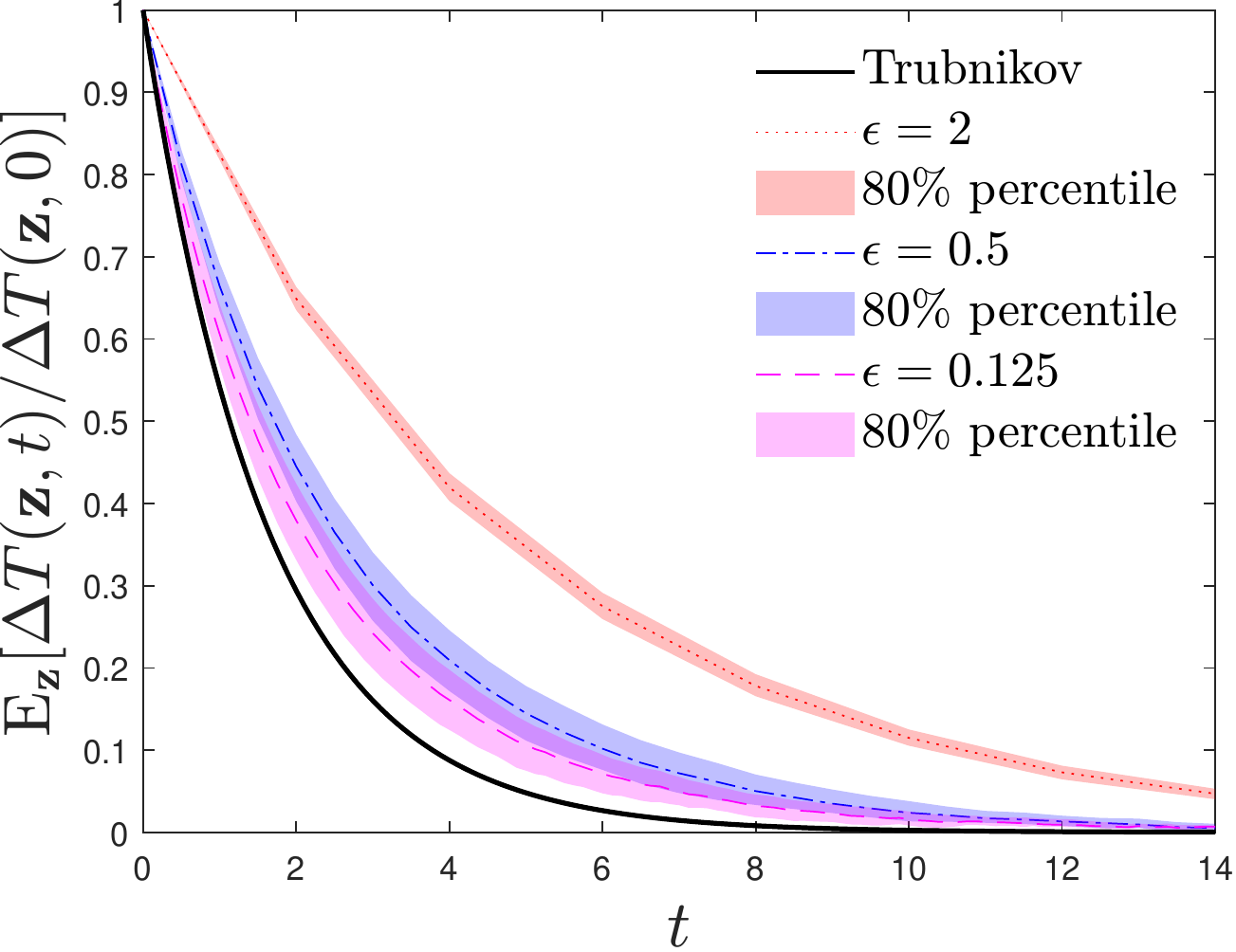}
	\includegraphics[width = 0.3\linewidth]{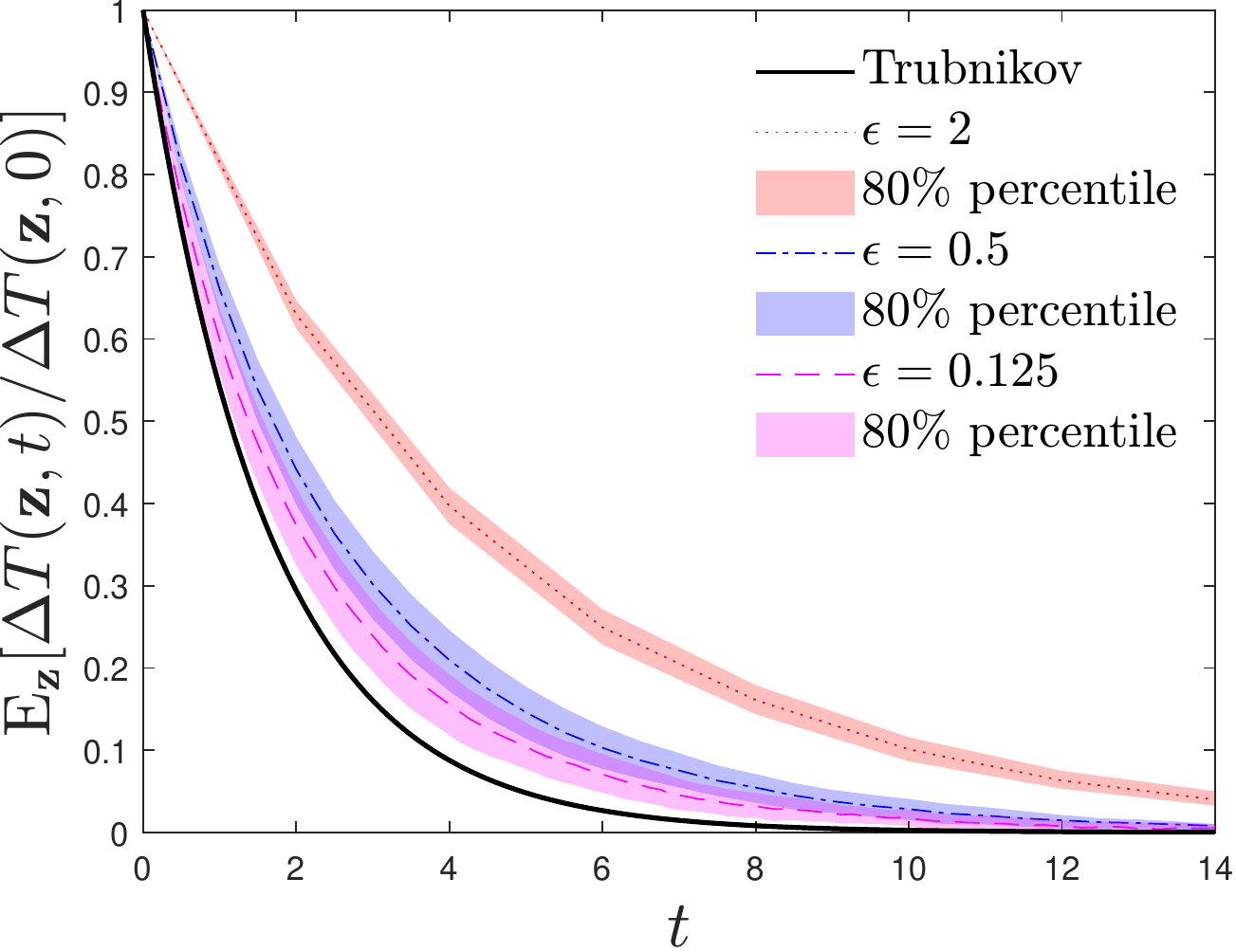}
	\caption{\small{\textbf{Test 4}. Time evolution of the expectation of $\Delta T(\z,t)/\Delta T(\z,0)$, for different values of the scale parameter $\epsilon$, for the Nanbu-Babovsky (top row) and Bird (bottom row) schemes, for $D^{(1)}_*$ (left panels), $D^{(2)}_*$ (centre panels), and $D^{(3)}_*$ (right panels). The black line is the benchmark solution \eqref{eq:trubnikov}. The number of particles is $N=5\times10^6$, $M=5$ and $\Delta t = \epsilon/\rho$.}}
	\label{fig:test_4_trubnikov}
\end{figure}
We initialize the distribution as an ellipsoid, i.e., as
\[
f_0(v,\z) = \frac{1}{\left(2\pi\right)^{3/2}} \frac{1}{\sqrt{T^0_x(\z) T^0_y(\z) T^0_z}} \left(e^{-\frac{v^2_x}{2 T^0_x(\z)}}e^{-\frac{v^2_y}{2 T^0_y(\z)}}e^{-\frac{v^2_z}{2 T^0_z}}\right),
\]
with anisotropic uncertain initial temperature
\[
T^0_x(\z) = T^0_y(\z) > T^0_z. 
\]
In the case of small uncertain temperature difference, the Trubnikov^^>\cite{Trubnikov1965} formula reads
\begin{equation}\label{eq:trubnikov}
\Delta T(\z,t)=\Delta T^0(\z) \exp \left( - t / \tau_T \right), \quad \text{with} \quad \tau_T = \frac{5}{8}\sqrt{2\pi} \left( \frac{8\sqrt{m}}{\pi \sqrt{2}} \frac{T(\z)^{3/2}}{e^4 \rho \log \Lambda}\right),
\end{equation}
which can be used as a benchmark solution to test the numerical schemes. We choose 
\[
\begin{split}
	& T^0_x(\z) = T^0_y(\z) = 0.08 + 0.04 \z \\
	& T^0_z = 0.04,
\end{split}
\]
with $\z\sim\mathcal{U}([0,1])$, $N=5\times10^6$ particles and a sG expansion up to order $M=5$. 

First, we analyse the behaviour of the schemes for different values of the scale parameter $\epsilon=\rho\Delta t$. We test the Nanbu-Babovsky and the Bird scheme, for the kernels $D^{(1)}_*$, $D^{(2)}_*$, and $D^{(3)}_*$. The results are summarised in Figure \ref{fig:test_4_trubnikov}. We observe that for smaller values of $\epsilon$, the numerical solutions get closer to the reference one, indicating that the schemes are consistent with the grazing limit. However, as pointed out also in^^>\cite{caflisch2010}, there is a discrepancy between the two solutions, since the Trubnikov relation \eqref{eq:trubnikov} is still an (analytical) approximation. 

Then, at fixed $\epsilon=0.5$, we compare the solutions obtained with different kernels, for both the Nanbu-Babovsky and the Nanbu scheme. In Figure \ref{fig:test_4_trubnikov_2} we may notice that with the choices $D^{(2)}_*$ and $D^{(3)}_*$ the schemes perform better with respect to the choice $D^{(1)}_*$, in the sense that they are closer to the Trubnikov solution.

In the end, we compare the results obtained with the Nanbu-Babovsky and the Nanbu schemes for all the kernels. We fix $\epsilon=0.5$, $N=10^6$ particles and $M=5$. In Figure \ref{fig:test_4_trubnikov_3} we show the time evolution of the temperature difference for $D^{(1)}_*$ (left panel), $D^{(2)}_*$ (centre panel), and $D^{(3)}_*$ (right panel).

\begin{figure}
	\centering
	\includegraphics[width = 0.4\linewidth]{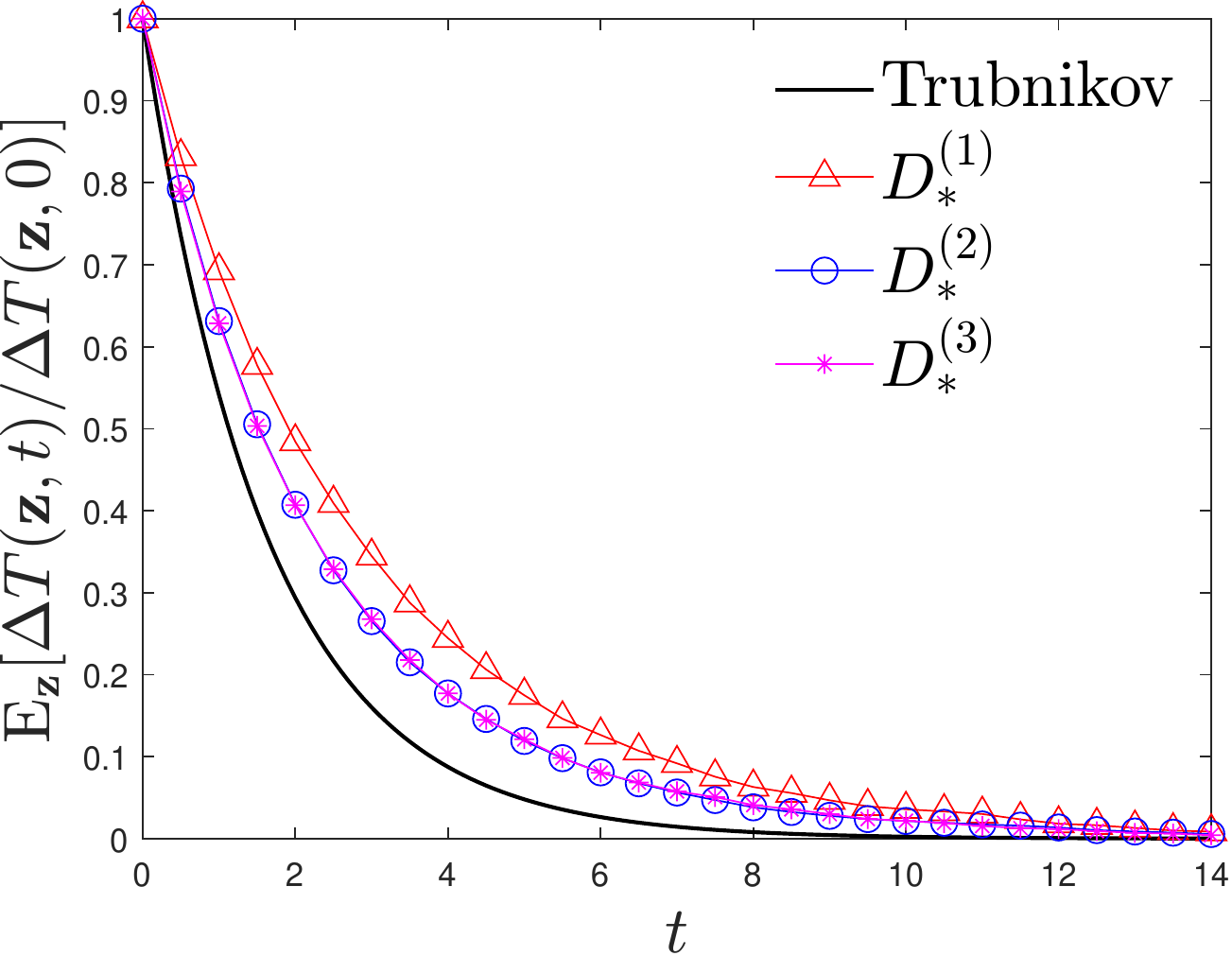}
	\includegraphics[width = 0.4\linewidth]{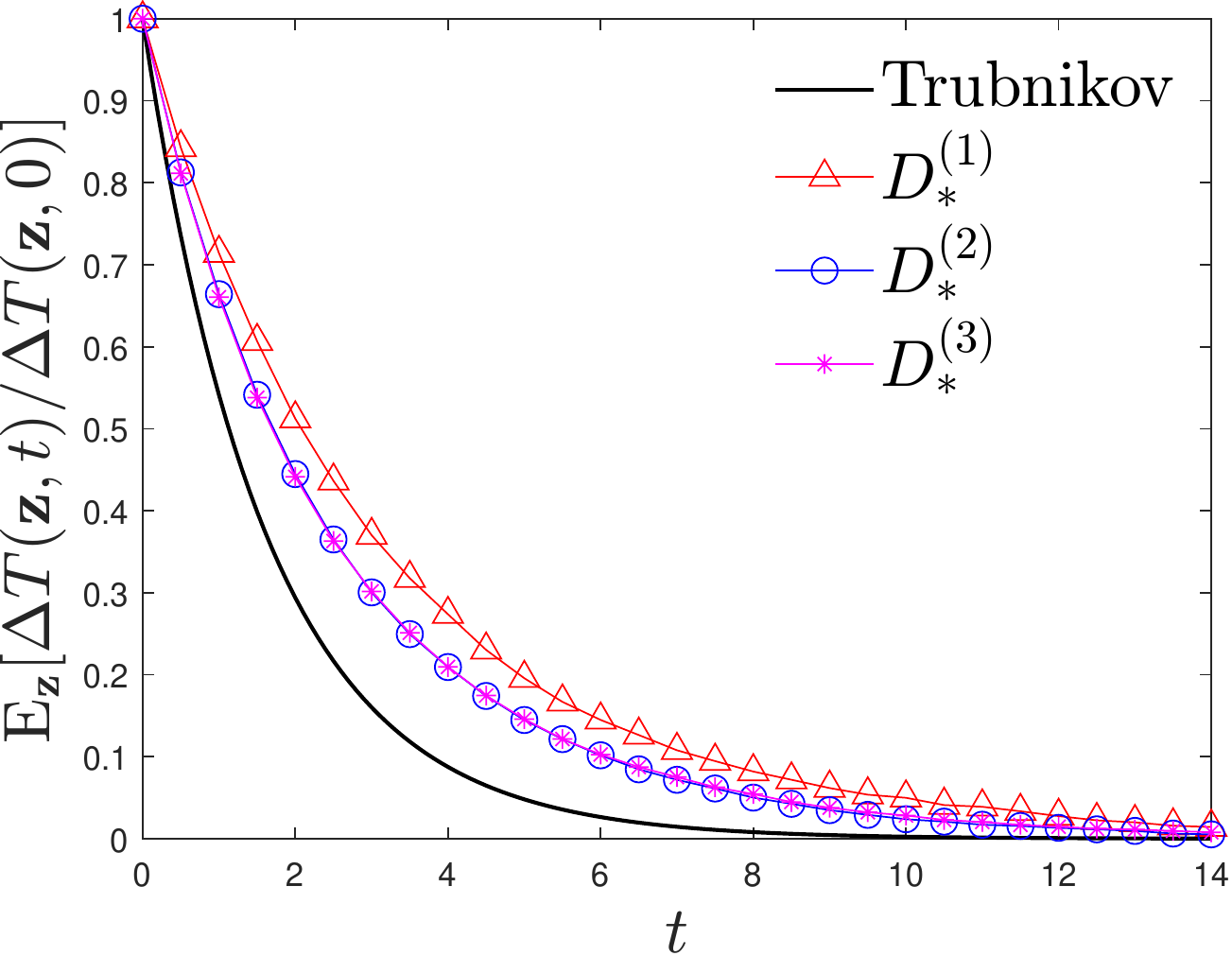}
	\caption{\small{\textbf{Test 4}. Comparison of the time evolution of the expectation of $\Delta T(\z,t)/\Delta T(\z,0)$ for the choices of the kernel $D^{(1)}_*$, $D^{(2)}_*$, and $D^{(3)}_*$, for fixed $\epsilon=0.5$, for both the Nanbu-Babovsky (left panel) and Bird (right panel) schemes. The black line is the benchmark solution \eqref{eq:trubnikov}. The number of particles is $N=5\times10^6$, $M=5$ and $\Delta t = \epsilon/\rho = 1 $.}}
	\label{fig:test_4_trubnikov_2}
\end{figure}
\begin{figure}
	\centering
	\includegraphics[width = 0.3\linewidth]{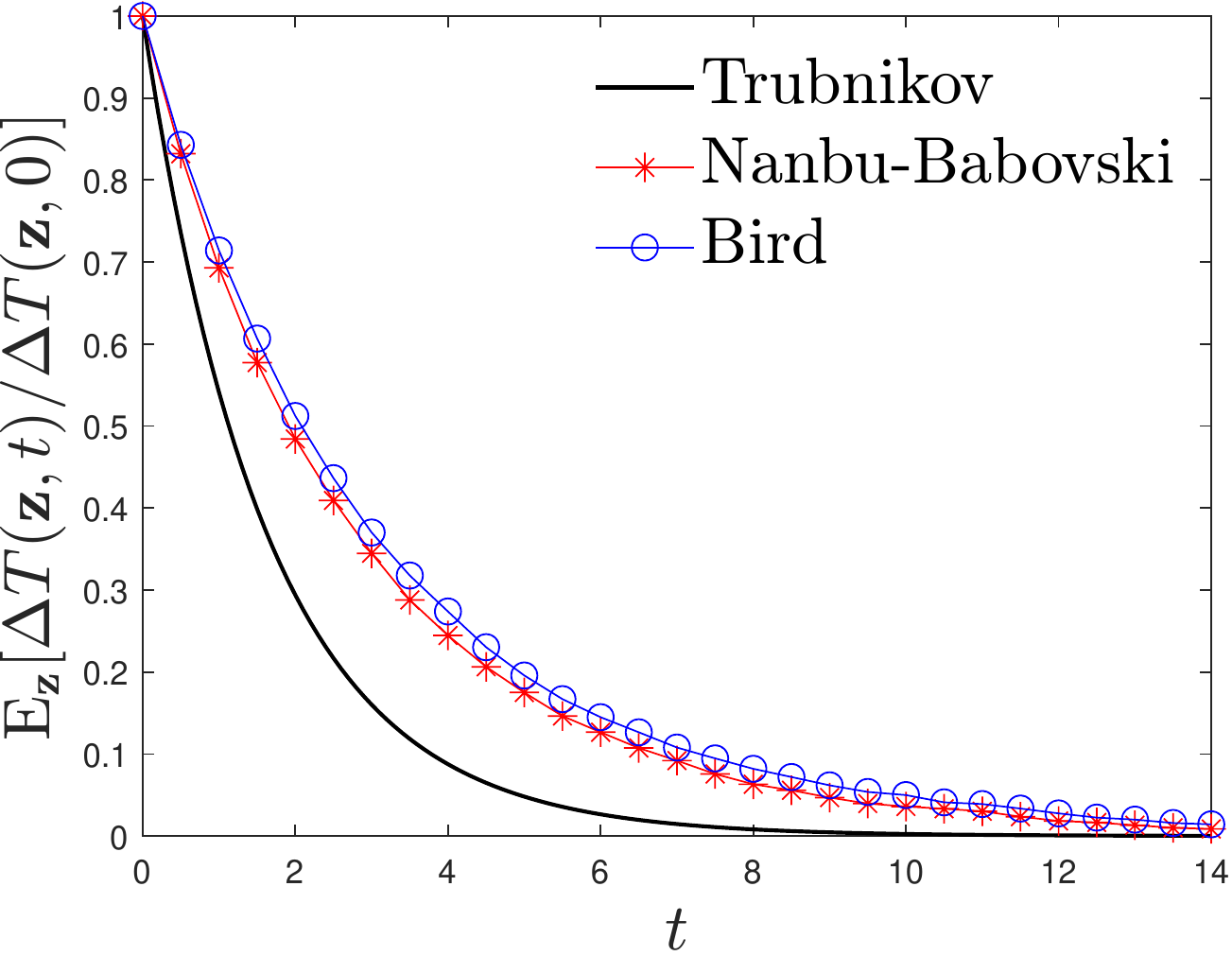}
	\includegraphics[width = 0.3\linewidth]{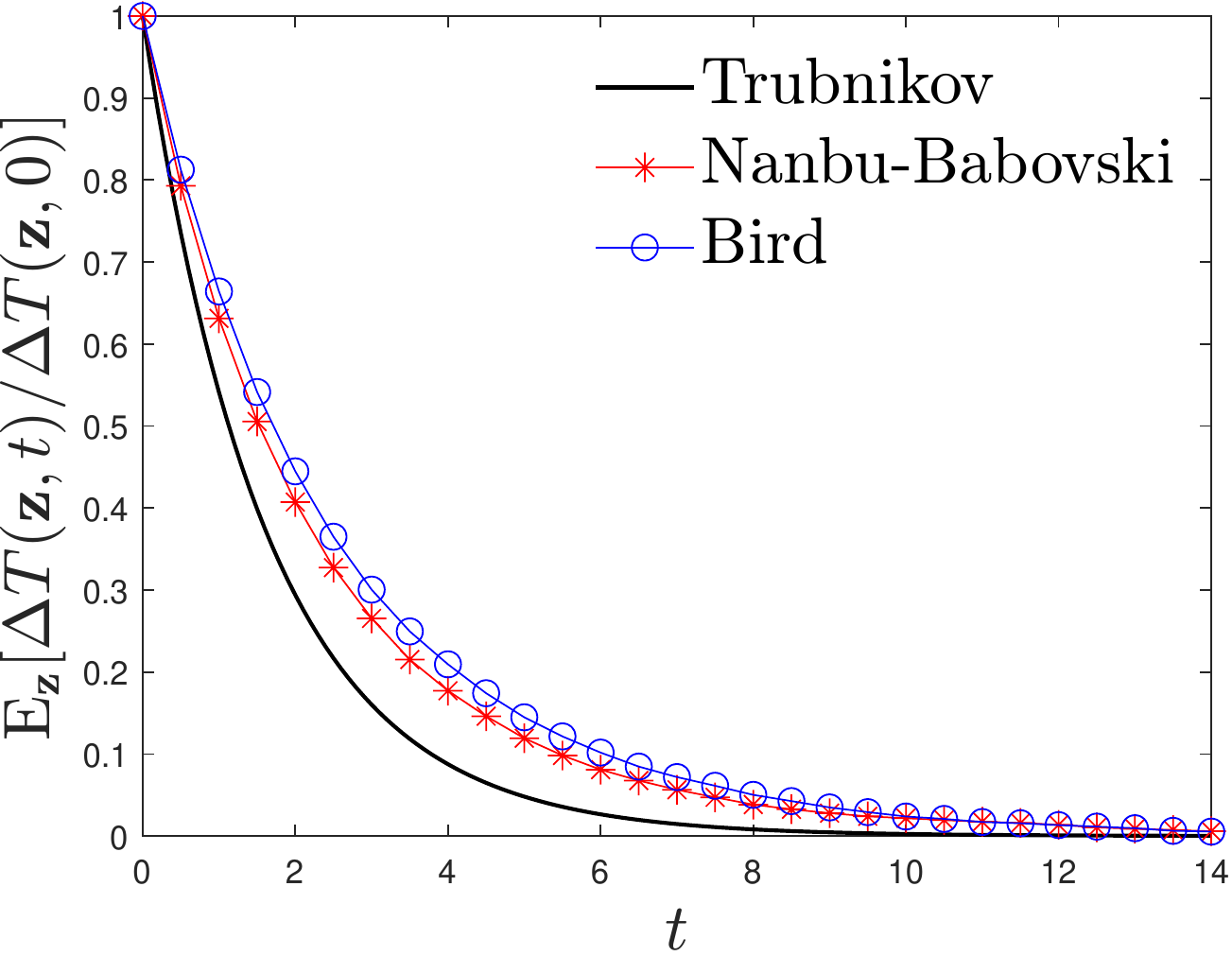}
	\includegraphics[width = 0.3\linewidth]{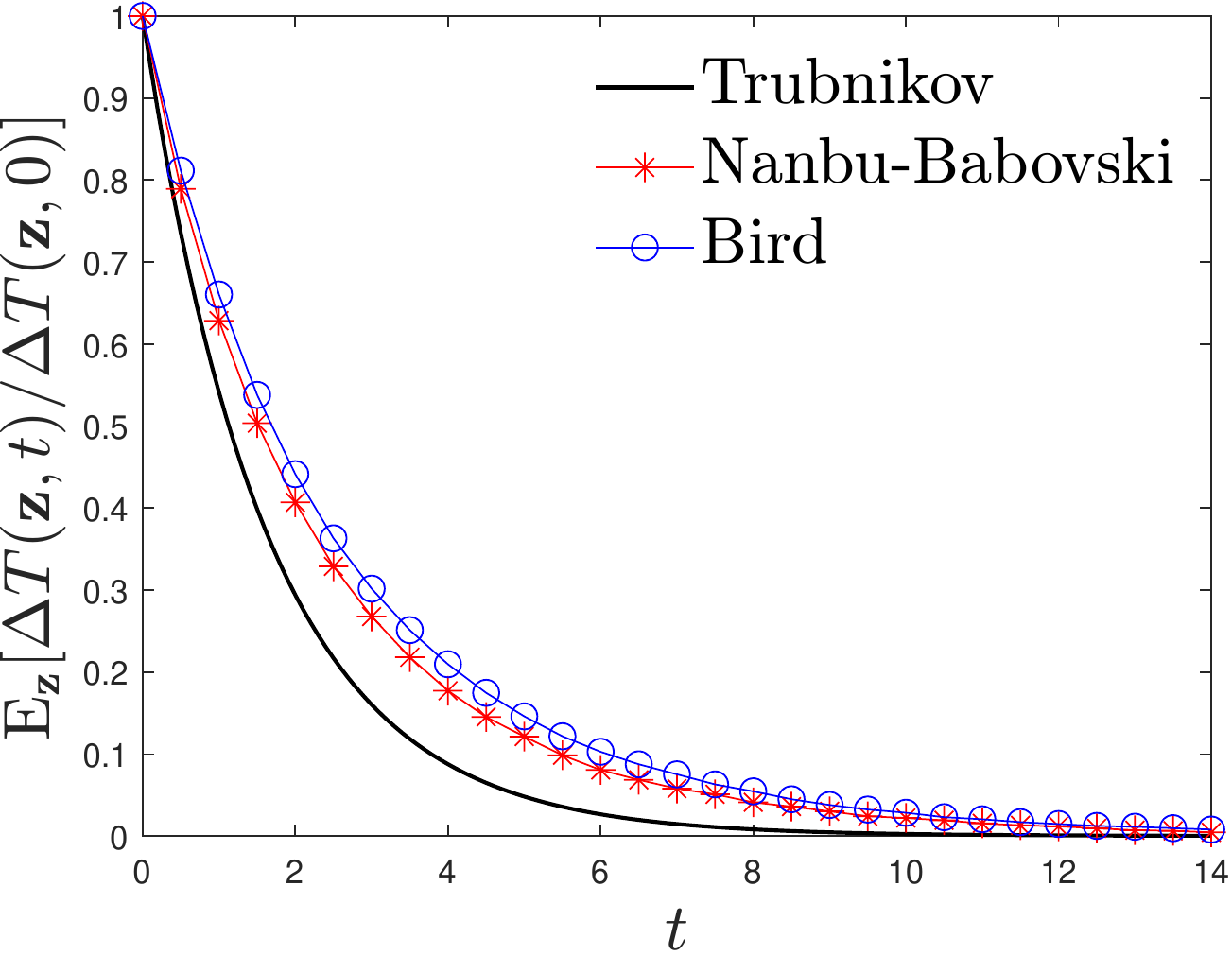}
	\caption{\small{\textbf{Test 4}. Comparison of the time evolution of the expectation of $\Delta T(\z,t)/\Delta T(\z,0)$ of the Nanbu-Babovsky and Bird schemes, for the kernels $D^{(1)}_*$ (left panel), $D^{(2)}_*$ (centre panel), and $D^{(3)}_*$ (right panel), for fixed $\epsilon=0.5$. The black line is the benchmark solution \eqref{eq:trubnikov}. The number of particles is $N=5\times10^6$, $M=5$ and $\Delta t = \epsilon/\rho $.} }
	\label{fig:test_4_trubnikov_3}
\end{figure}
\paragraph{Sum of two Gaussians}
\begin{figure}
	\centering
	\includegraphics[width = 0.3\linewidth]{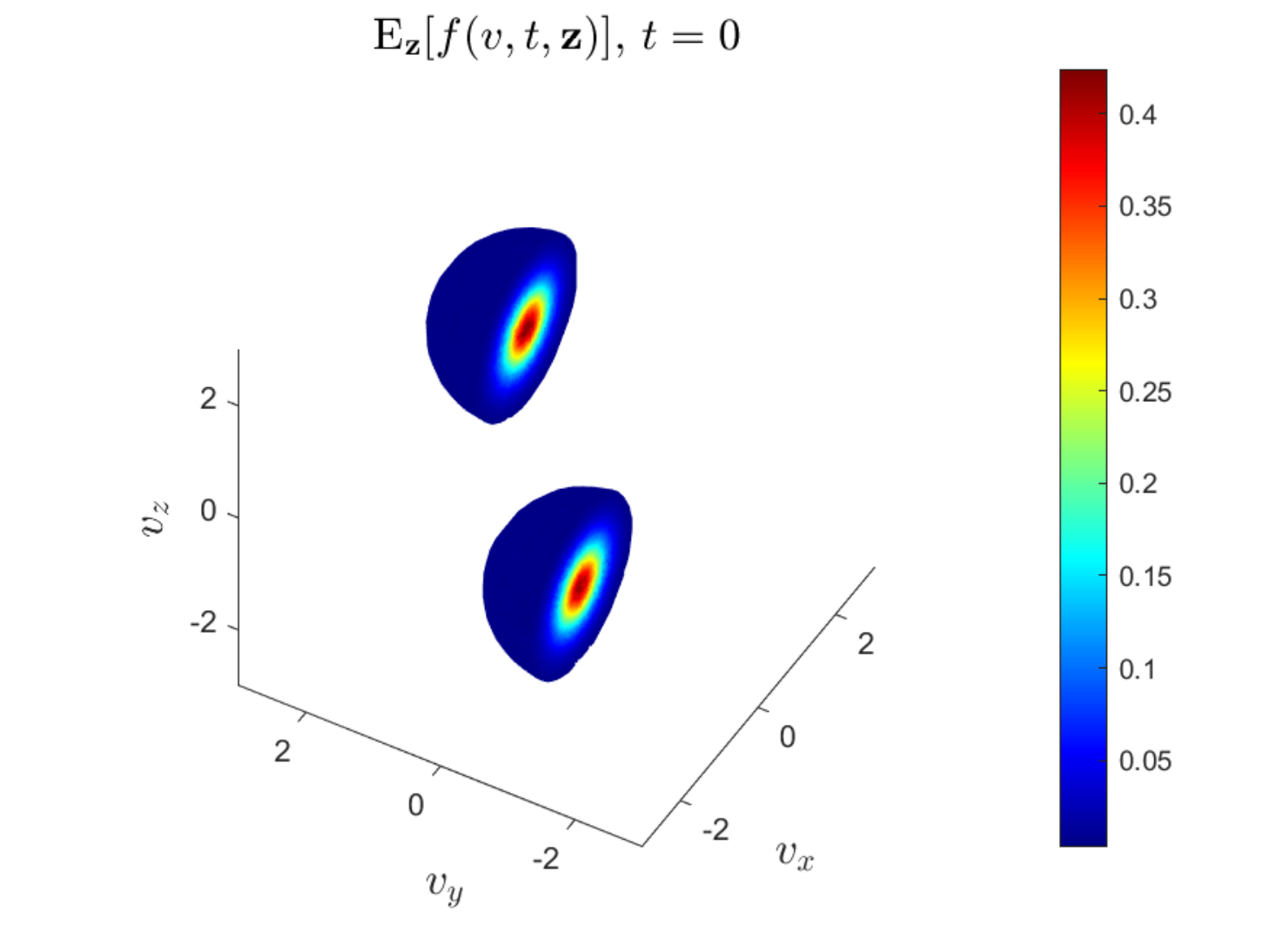}
	\includegraphics[width = 0.3\linewidth]{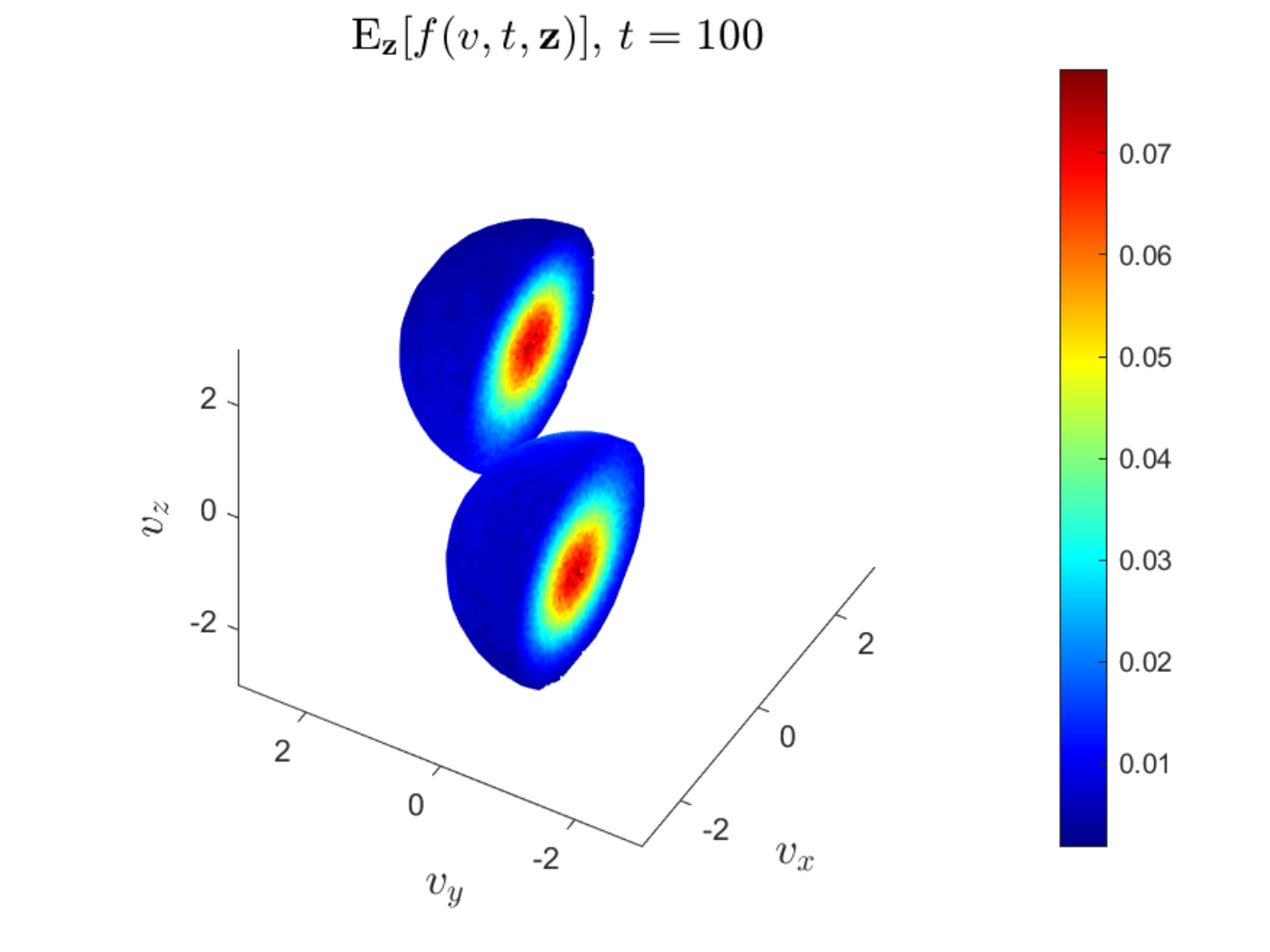}
	\includegraphics[width = 0.3\linewidth]{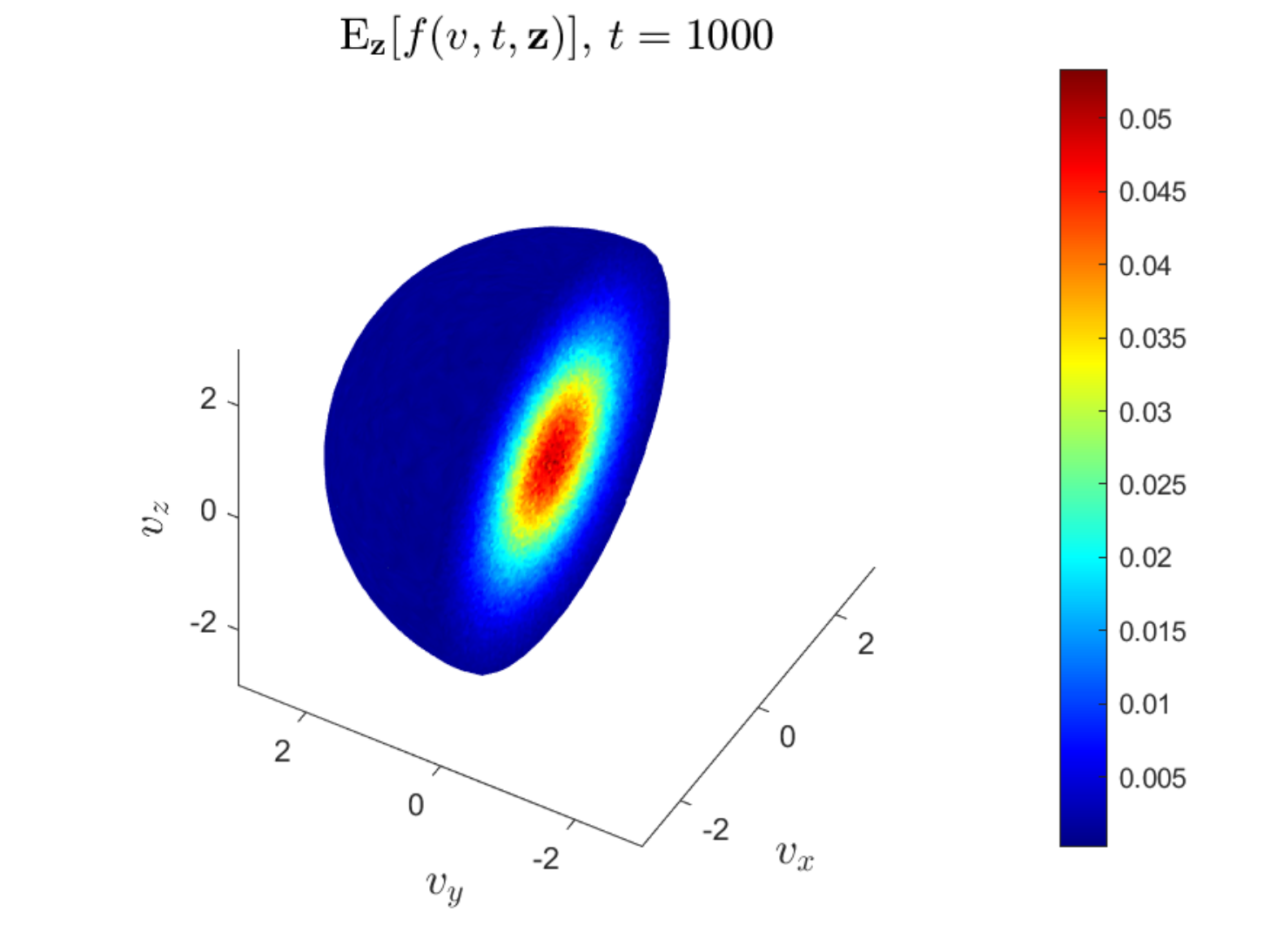}
	\includegraphics[width = 0.3\linewidth]{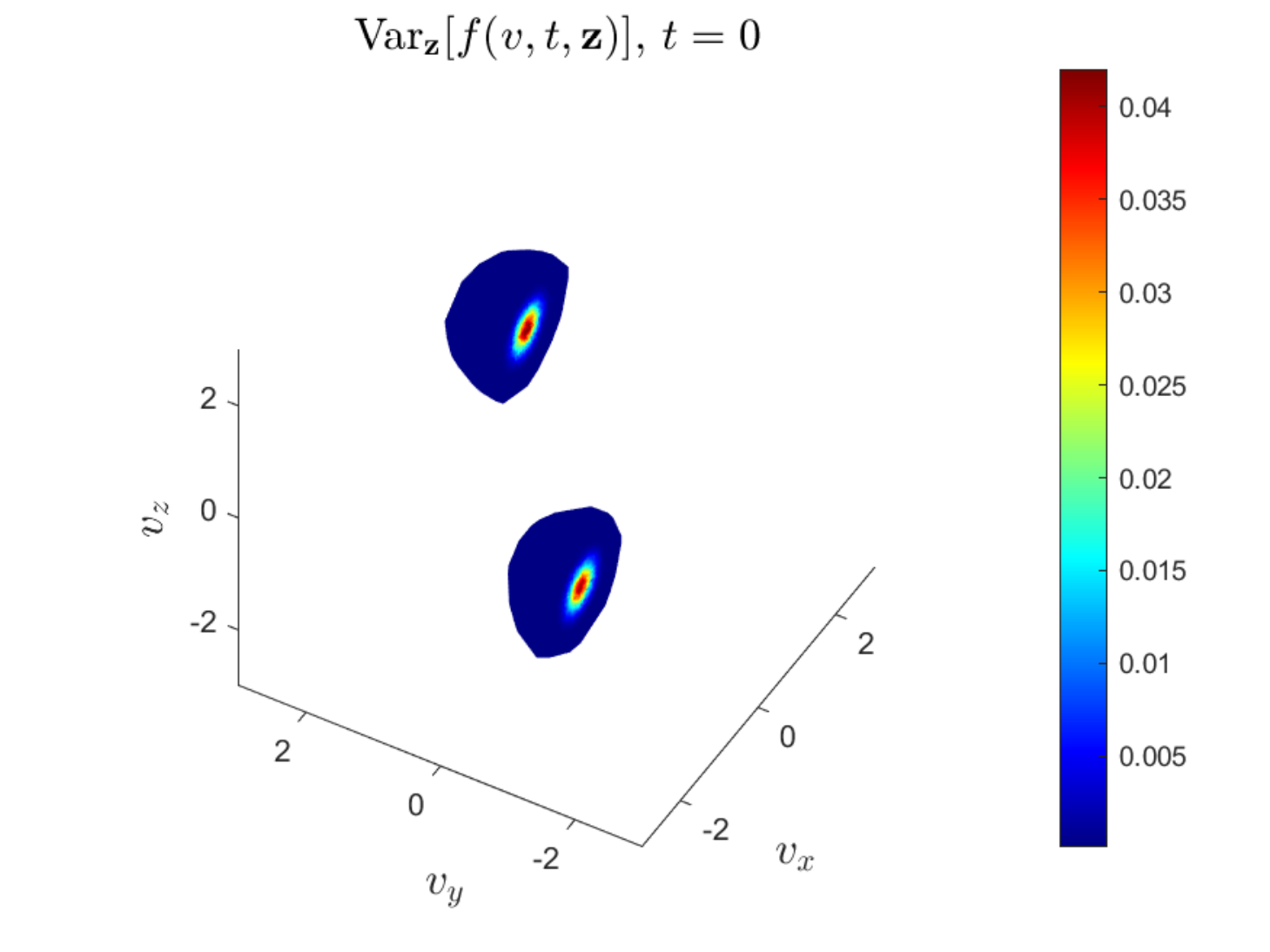}
	\includegraphics[width = 0.3\linewidth]{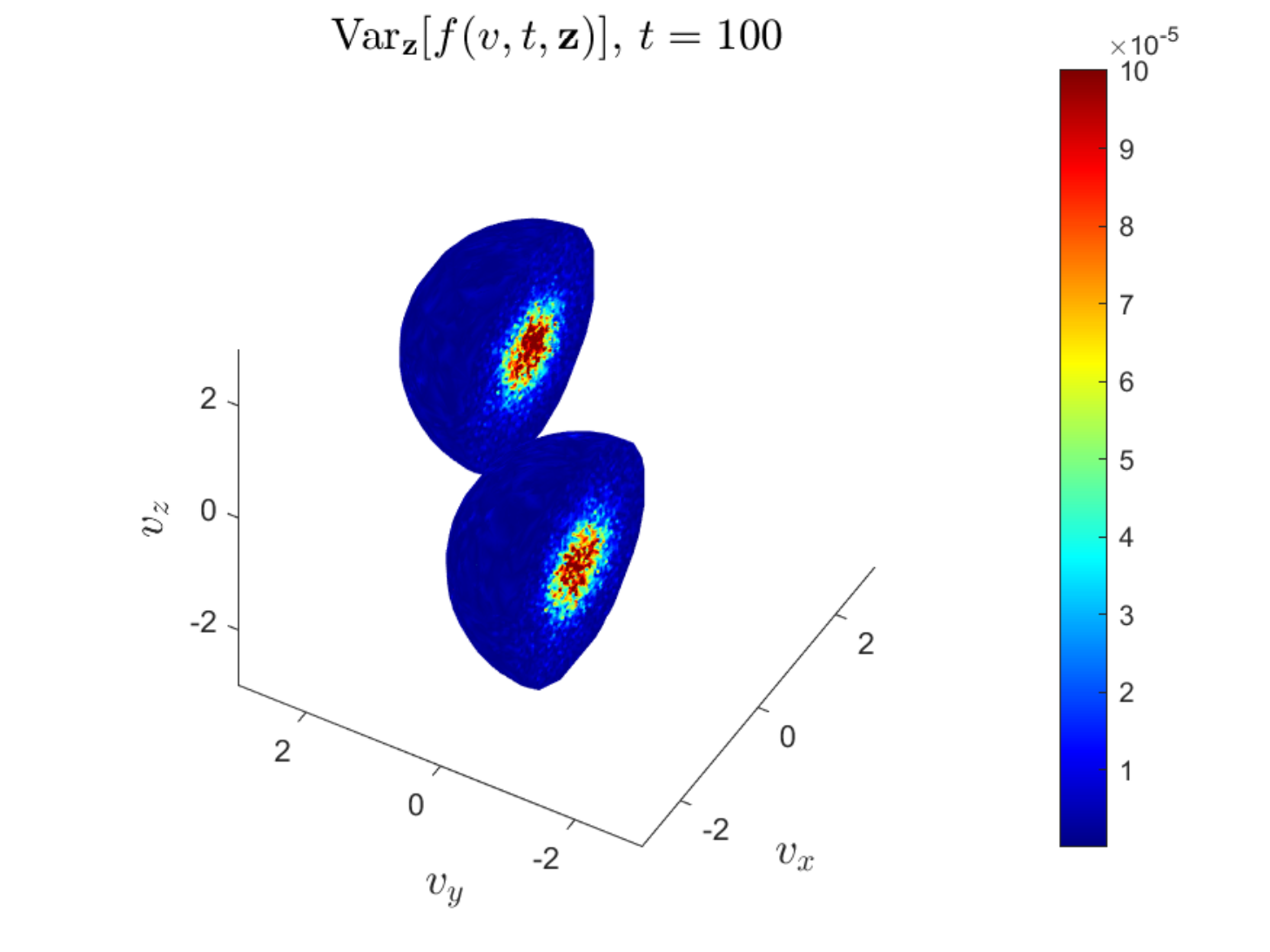}
	\includegraphics[width = 0.3\linewidth]{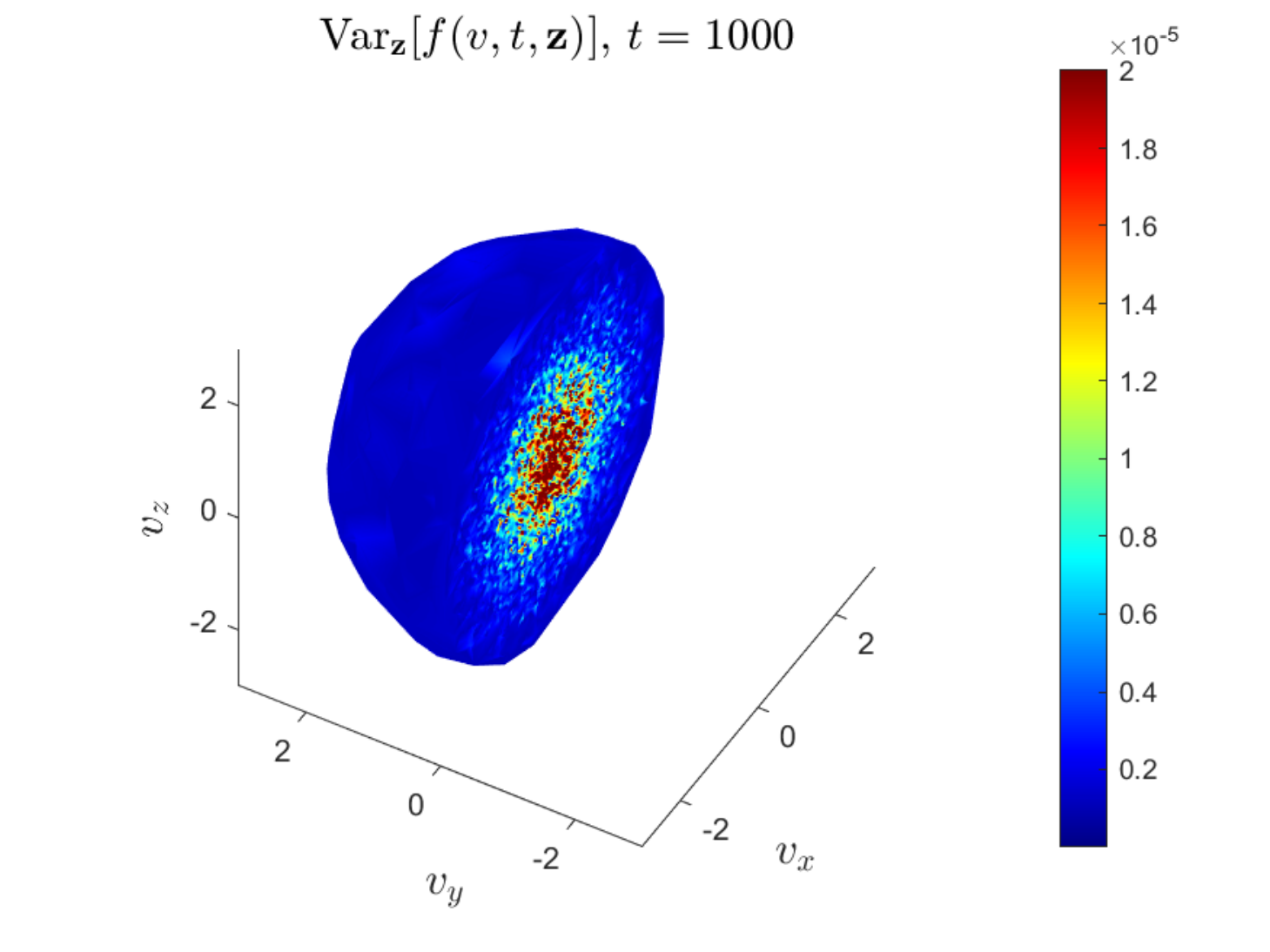}
	\caption{\small{\textbf{Test 4}. Slices of the expected distribution $\mathbb{E}_{\z}[f(v,t,\z)]$ (upper row) the variance $\mathrm{Var}_{\z}[f(v,t,\z)]$ (lower row) at fixed times $t=0,100,1000$, for the Coulombian model with uncertainties, with sum of two Gaussians initial conditions \eqref{eq:bimodal_init}. We choose $N=5\times10^7$ particles, $M=5$, $\Delta t=\epsilon/\rho=1$.} }
	\label{fig:test_4_bimodal_3D}
\end{figure}
\begin{figure}
	\centering
	\includegraphics[width = 0.3\linewidth]{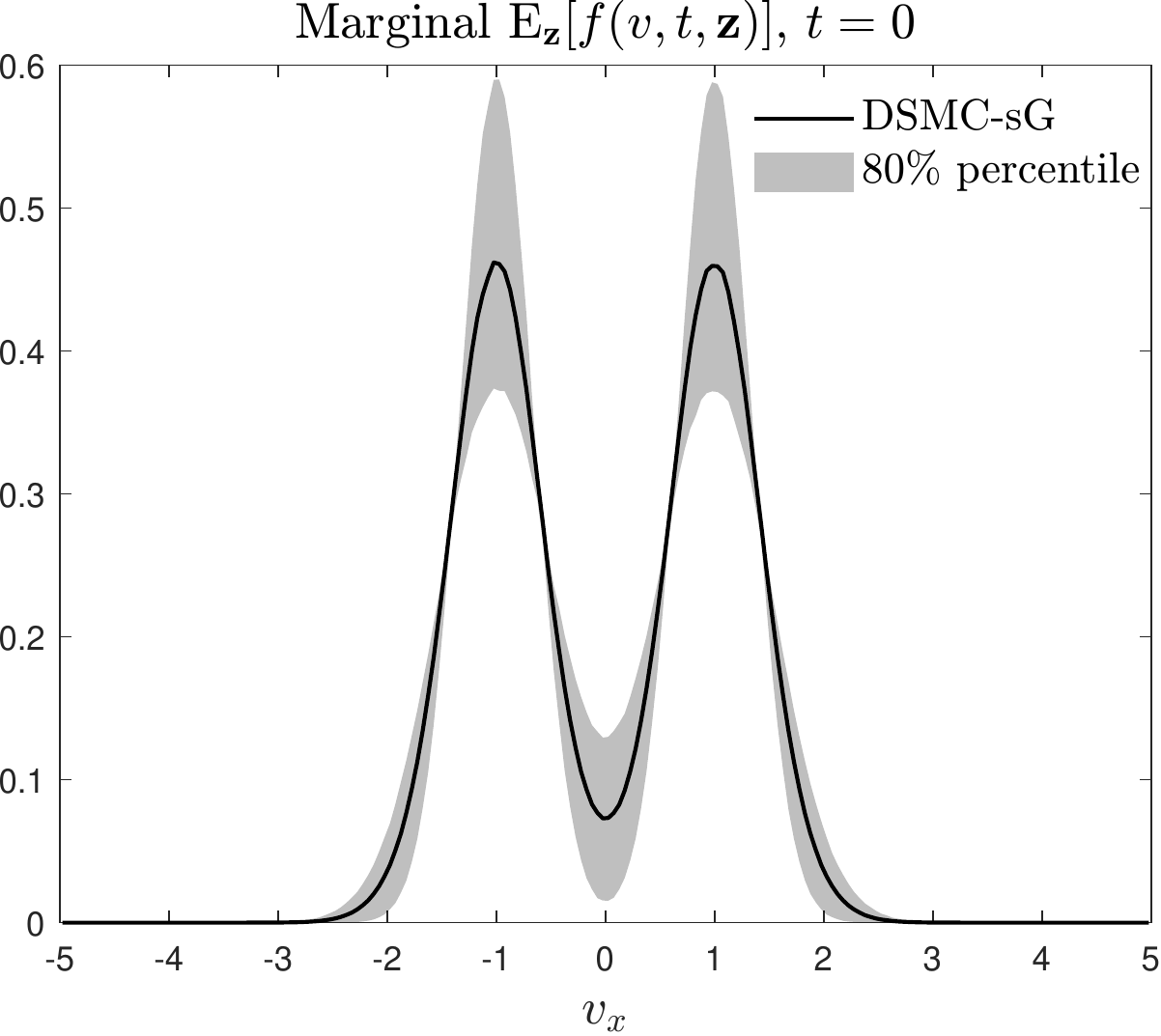}
	\includegraphics[width = 0.3\linewidth]{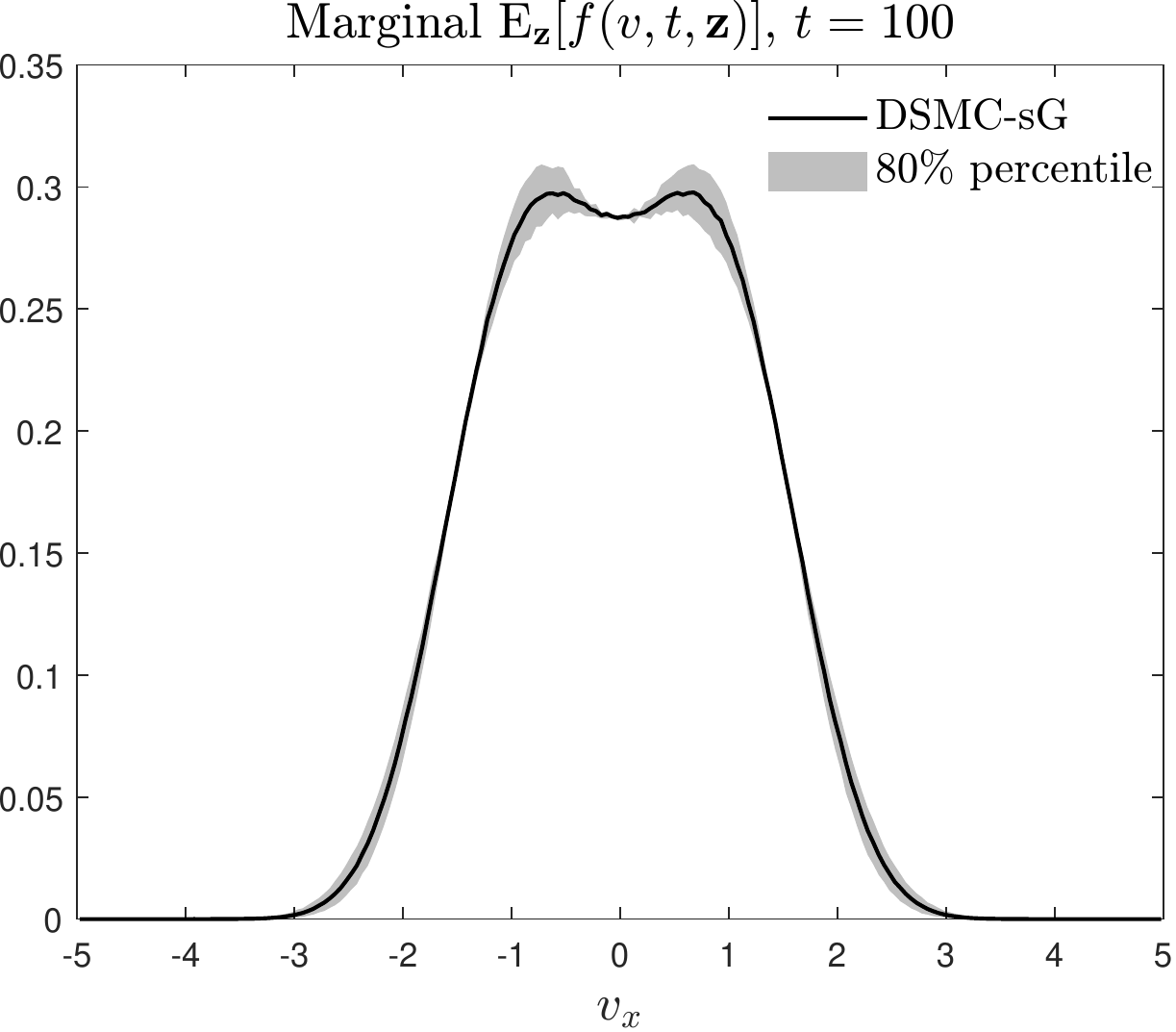}
	\includegraphics[width = 0.3\linewidth]{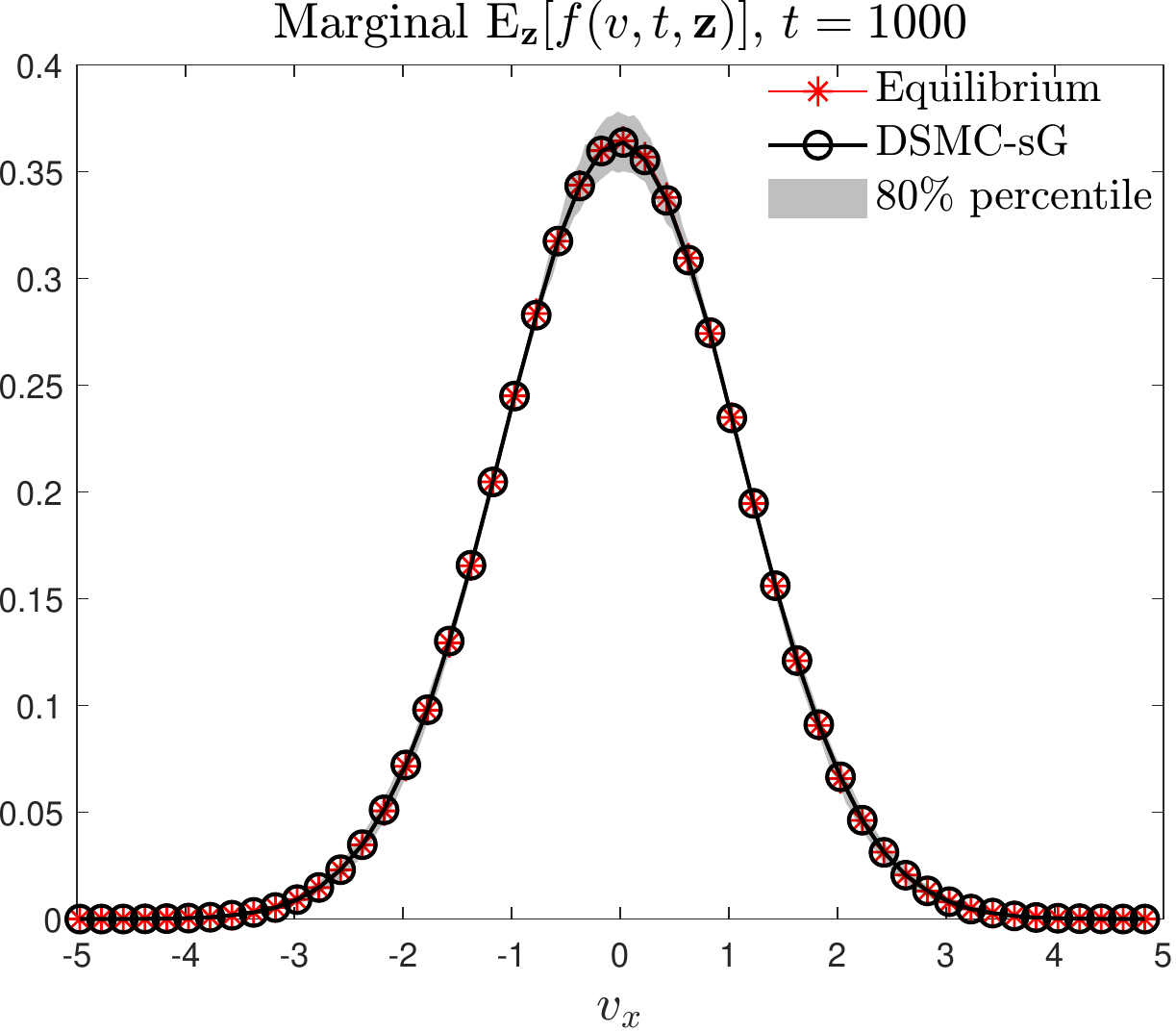}
	\caption{\small{\textbf{Test 4}. Evolution at fixed times $t=0,100,1000$ of the marginal $\mathbb{E}_{\z}[f(v,t,\z)]$ of the Coulombian model with uncertainties, with sum of two Gaussians initial conditions \eqref{eq:bimodal_init}. We choose $N=5\times10^7$ particles, $M=5$, $\Delta t=\epsilon/\rho=1$. The equilibrium marginal distribution (red starred) of the right panel is the Maxwellian distribution computed with the theoretical (conserved) mean and temperature.}}
	\label{fig:test_4_bimodal}
\end{figure}
We initialize the distribution as the sum of two Gaussians centred in $v=\pm 1$ and with the same uncertain temperature $T(\z)=0.1+0.2\z$, $\z\sim\mathcal{U}([0,1])$, i.e.,
\be \label{eq:bimodal_init}
f_0(v,\z) = \dfrac{1}{2(2\pi T(\z))^{3/2}} \left( e^{-\dfrac{|v+1|^2}{2 T(\z)}} + e^{-\dfrac{|v-1|^2}{2 T(\z)}}\right).
\ee
We use the Nanbu-Babovsky algorithm and the kernel $D^{(3)}_*$. We choose $N=5\times 10^7$ particles, $\Delta t=\epsilon/\rho=1$ and a stochastic Galerkin expansion with $M=5$. As we expected, the system evolves toward the equilibrium distribution, that is the centred Gaussian with the temperature $T(\z)$. In Figure \ref{fig:test_4_bimodal_3D} we show the slices at fixed times $t=0,100,1000$ of $\mathbb{E}_{\z}[f(v,t,\z)]$ (upper row), and $\mathrm{Var}_{\z}[f(v,t,\z)]$ (lower row). In Figure \ref{fig:test_4_bimodal} we display the marginals of $\mathbb{E}_{\z}[f(v,t,\z)]$ at the same times. From the right panel, we may notice the accordance between the numerical solution (black circled line) and the expected equilibrium distribution (red starred line).

\paragraph{Bump on tail}
\begin{figure}
	\centering
	\includegraphics[width = 0.45\linewidth]{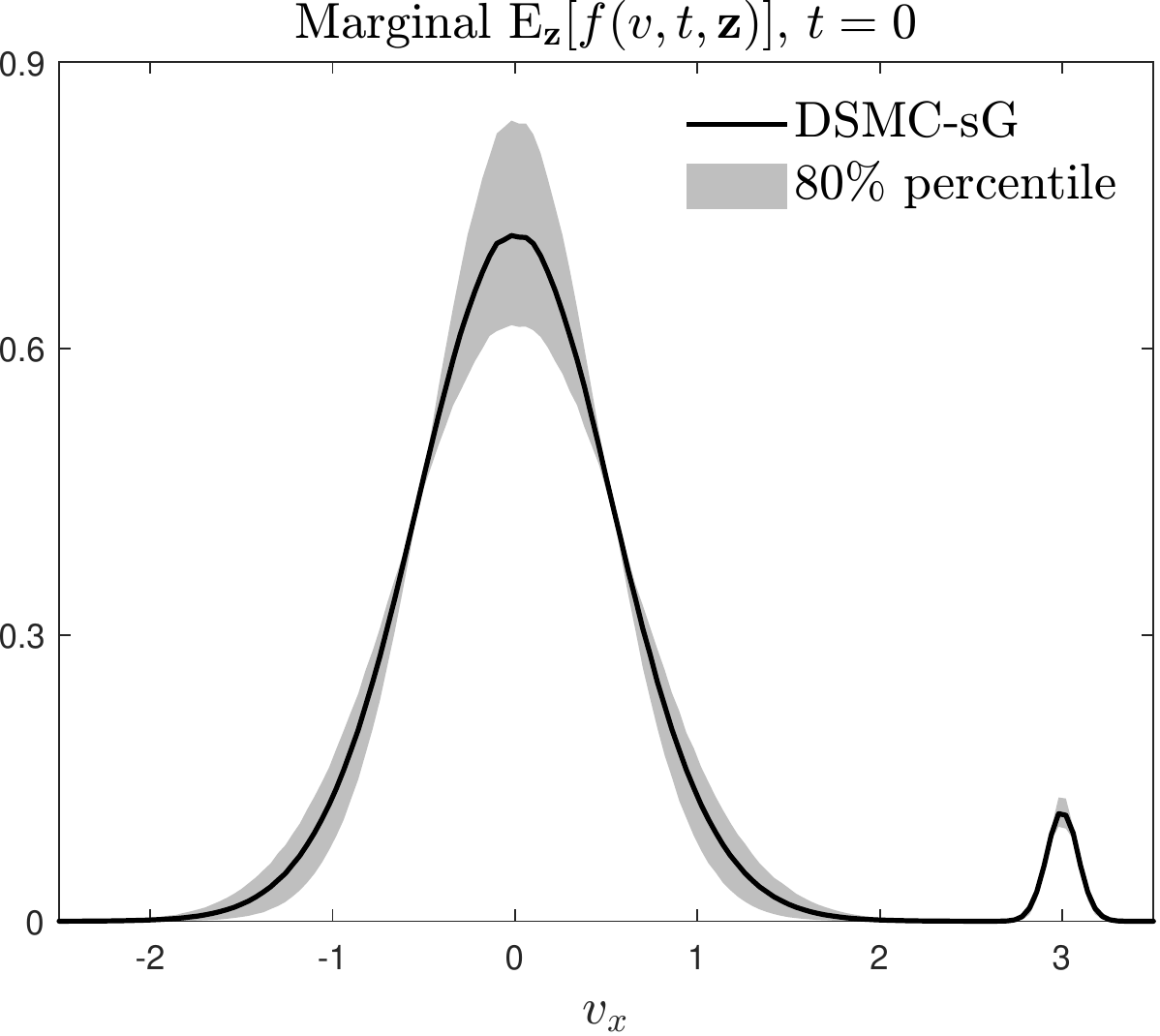}
	\includegraphics[width = 0.45\linewidth]{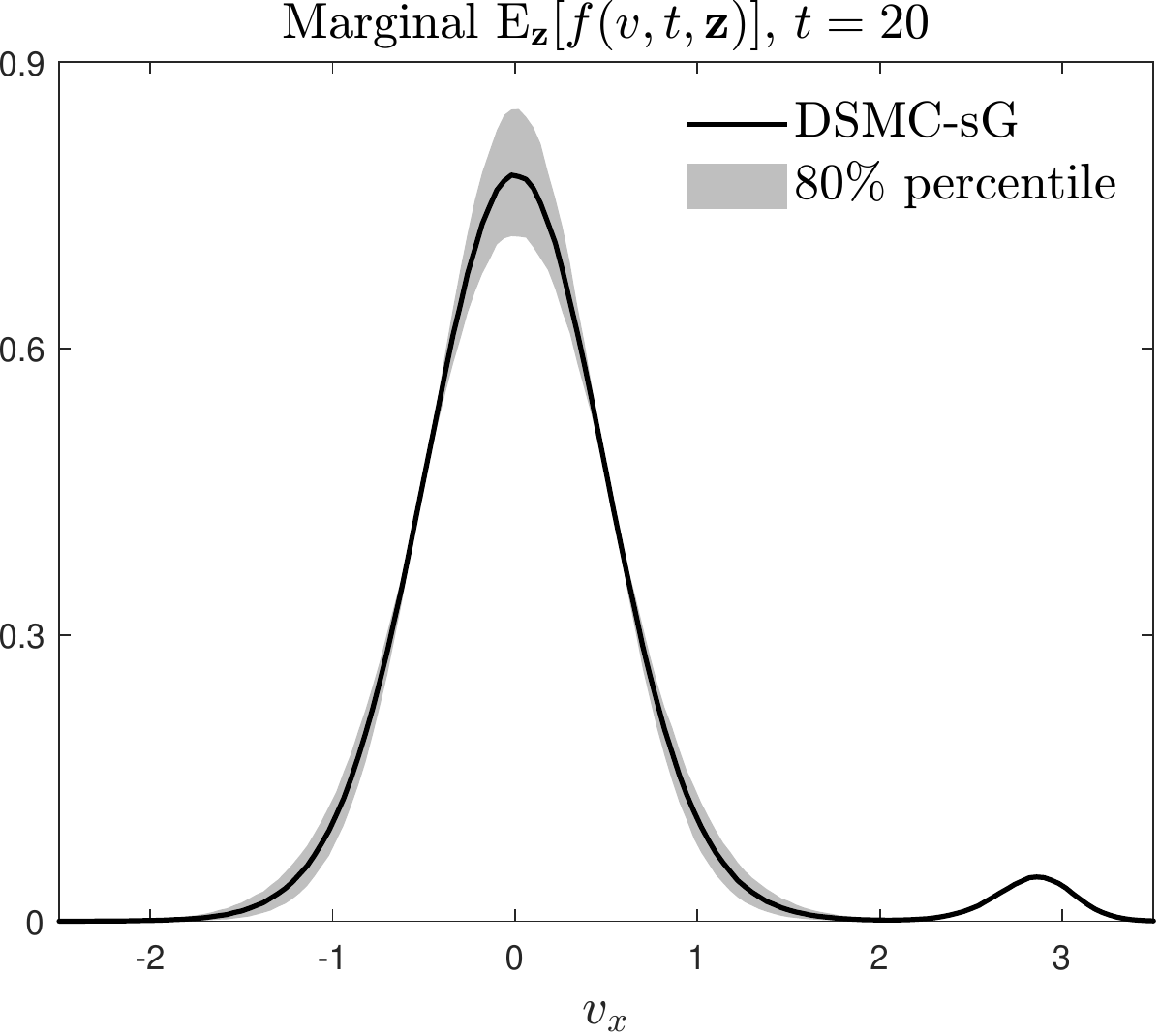}
	\includegraphics[width = 0.45\linewidth]{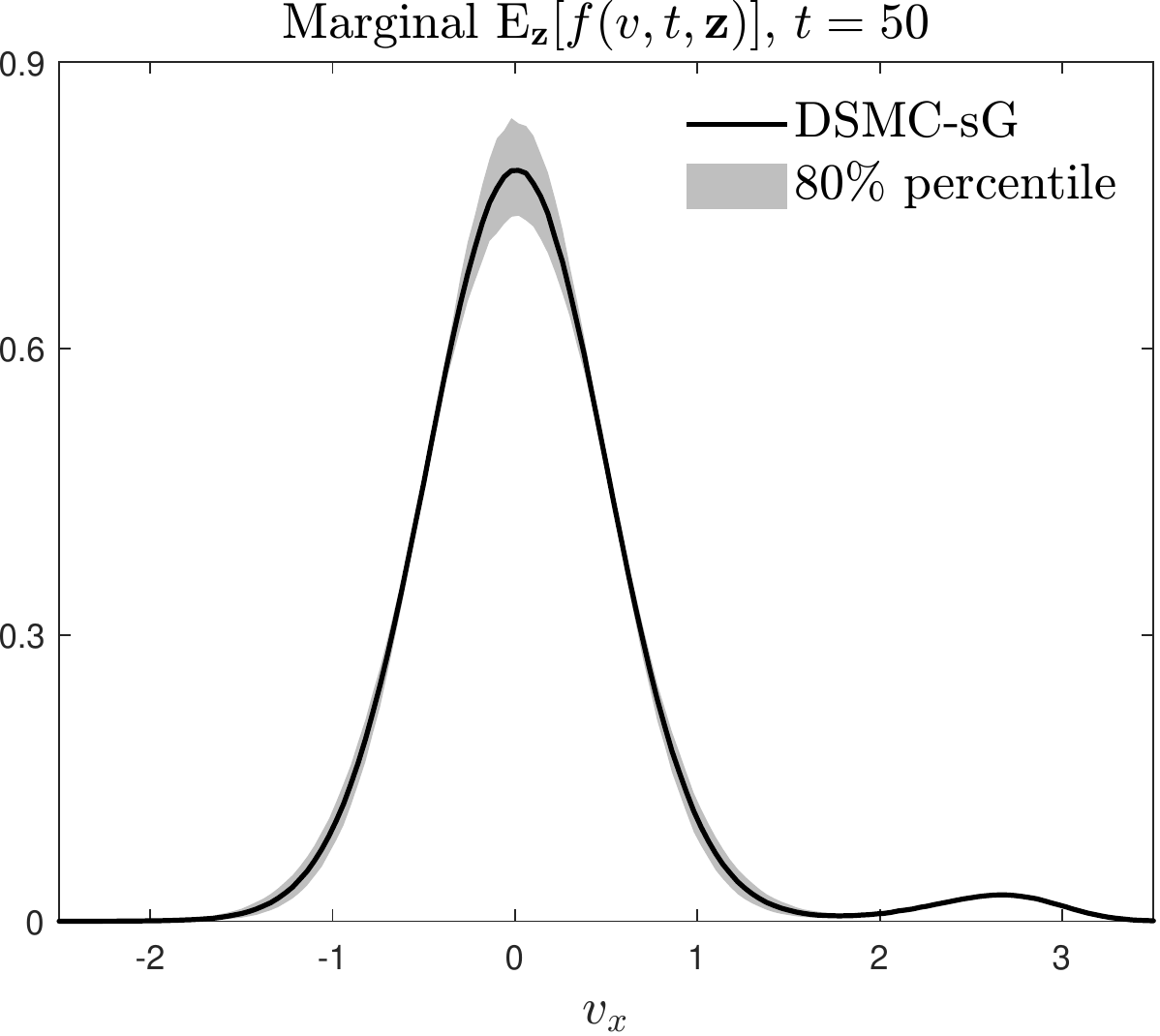}
	\includegraphics[width = 0.45\linewidth]{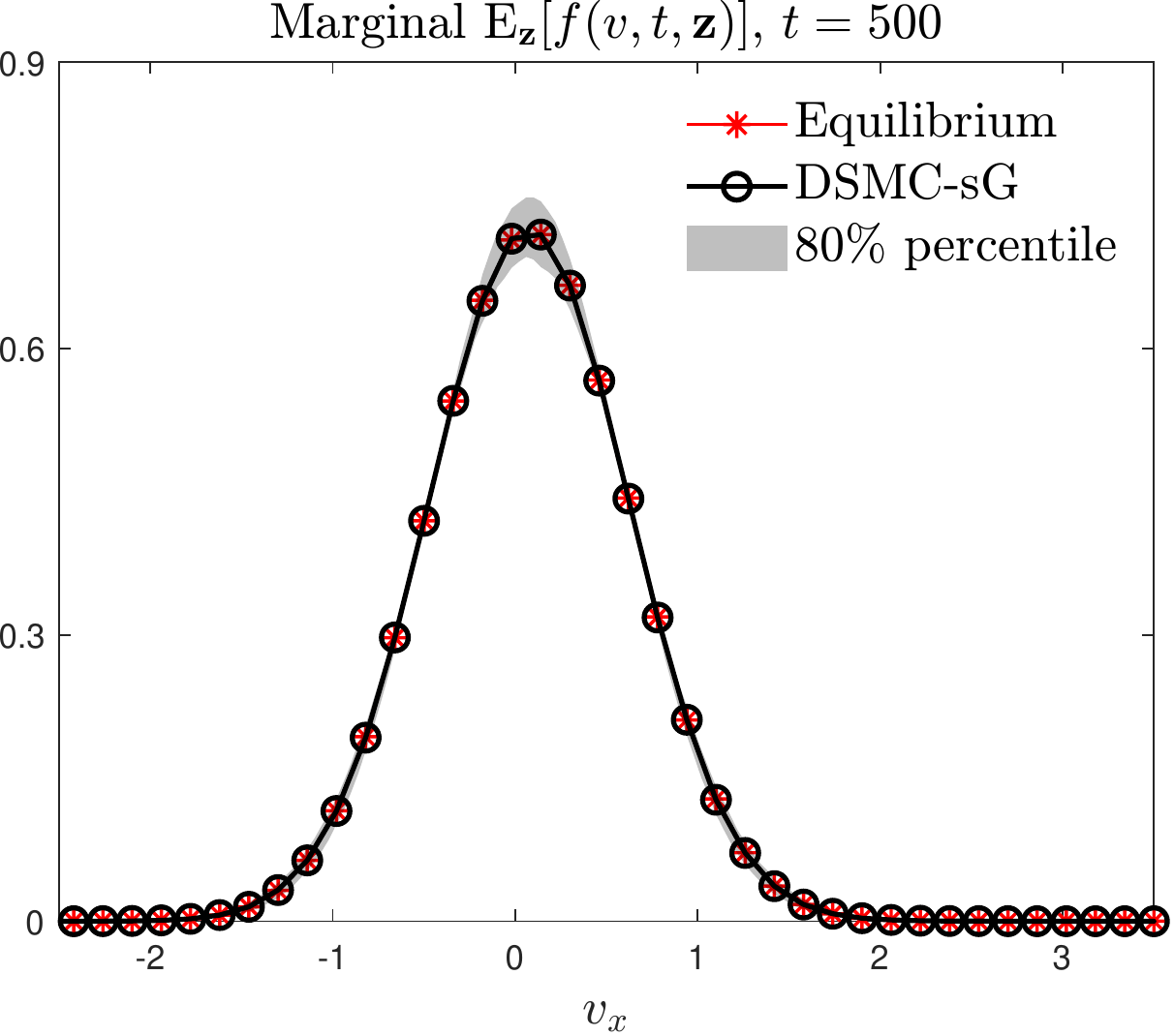}
	\caption{\small{\textbf{Test 4}. Evolution at fixed times $t=0,20,50,500$ of the marginal $\mathbb{E}_{\z}[f(v,t,\z)]$ of the Coulombian model with uncertainties, with bump on tail initial conditions. We choose $N=5\times10^7$ particles, $M=5$, $\Delta t=\epsilon/\rho=1$. The equilibrium marginal distribution (red starred) of the lower row, the right panel is Maxwellian distribution computed with the theoretical (conserved) mean and temperature.} }
	\label{fig:test_4_bump}
\end{figure}
We initialize the distribution as a centred Gaussian with a bump on tail, namely a small portion of mass concentrated in $v=3$
\be \label{eq:bot_init}
f_0(v,\z) = \dfrac{1}{(2\pi T_1(\z))^{3/2}} \dfrac{39}{40}e^{-\dfrac{|v|^2}{2 T_1(\z)}} + \dfrac{1}{(2\pi T_2(\z))^{3/2}} \dfrac{1}{40}e^{-\dfrac{|v-3|^2}{2 T_2(\z)}}
\ee
with uncertain temperatures $T_1(\z)=0.2+0.2\z$, $\z\sim\mathcal{U}([0,1])$, and $T_2(\z)=T_1(\z)/40$. Given this initial condition, we observe that the conserved quantities, besides the mass, are $\textrm{M}1=\frac{3}{40}$ and $T(\z)=\frac{39}{40}T_1(\z)+\frac{1}{40}T_2(\z)=\frac{1561}{1600}T_1(\z)$. We choose again the Nanbu-Babovsky algorithm and the kernel $D^{(3)}_*$, with $N=5\times 10^7$ particles, $\Delta t=\epsilon/\rho=1$ and a stochastic Galerkin expansion $M=5$.

In Figure \ref{fig:test_4_bump} we display the marginals $\mathbb{E}_{\z}[f(v,t,\z)]$ at fixed times $t=0,20,50,500$. We observe that the bump is absorbed as the time increase and, at the time $t=500$, the system is close to the equilibrium, i.e. the Gaussian with mean $\textrm{M}1=\frac{3}{40}$ and temperature $T(\z)$. In particular, in the bottom row, right panel, the accordance is good between the DSMC-sG approximation and the expected equilibrium distribution.

\subsection{Test 5: DSMC-sG versus DSMC-MC}
We are interested here in comparing the DSMC-sG algorithm with the DSMC-MC, i.e., the standard DSMC scheme with a Monte Carlo sampling of the random parameter. We take the same computational setting of Section \ref{sec:test3}, that is, Nanbu-Babovsky scheme, kernel $D^{(3)}_*$, and initial conditions given by \eqref{eq:initBKW}. We fix the number of particles $N=10^6$, the time step $\Delta t=\epsilon/\rho=0.1$, and we compute the error in the evaluation of the fourth order moment at fixed time $t=1$
\[
\mathrm{Error}=|\mathbb{E}_{\z}[\mathrm{M}4(t,\z)]-\mathbb{E}_{\z}[\widetilde{\mathrm{M}4}(t,\z)]|,
\]
where $\widetilde{\mathrm{M}4}(t,\z)$ is a reference solution computed with a collocation algorithm with $N=10^8$ particles and $M=20$ collocation nodes. We want to compare the error with the computational cost, that is $N\cdot M^2$ for the DSMC-sG, where $M$ is the order of expansion, and $N\cdot M$ for the DSMC-MC, where $M$ is the size of the sample of the random parameter. In Figure \ref{fig:test_5_error_cost} we show the results for the cost divided by the fixed number of particles $N$. As we can see, the DSMC-sG performs better than the DSMC-MC for small cost. Moreover, we observe that the error of the DSMC-sG is saturated by the Monte Carlo part of the algorithm since it is constant for increasing order of expansion.

\begin{figure}
	\centering
	\includegraphics[width = 0.45\linewidth]{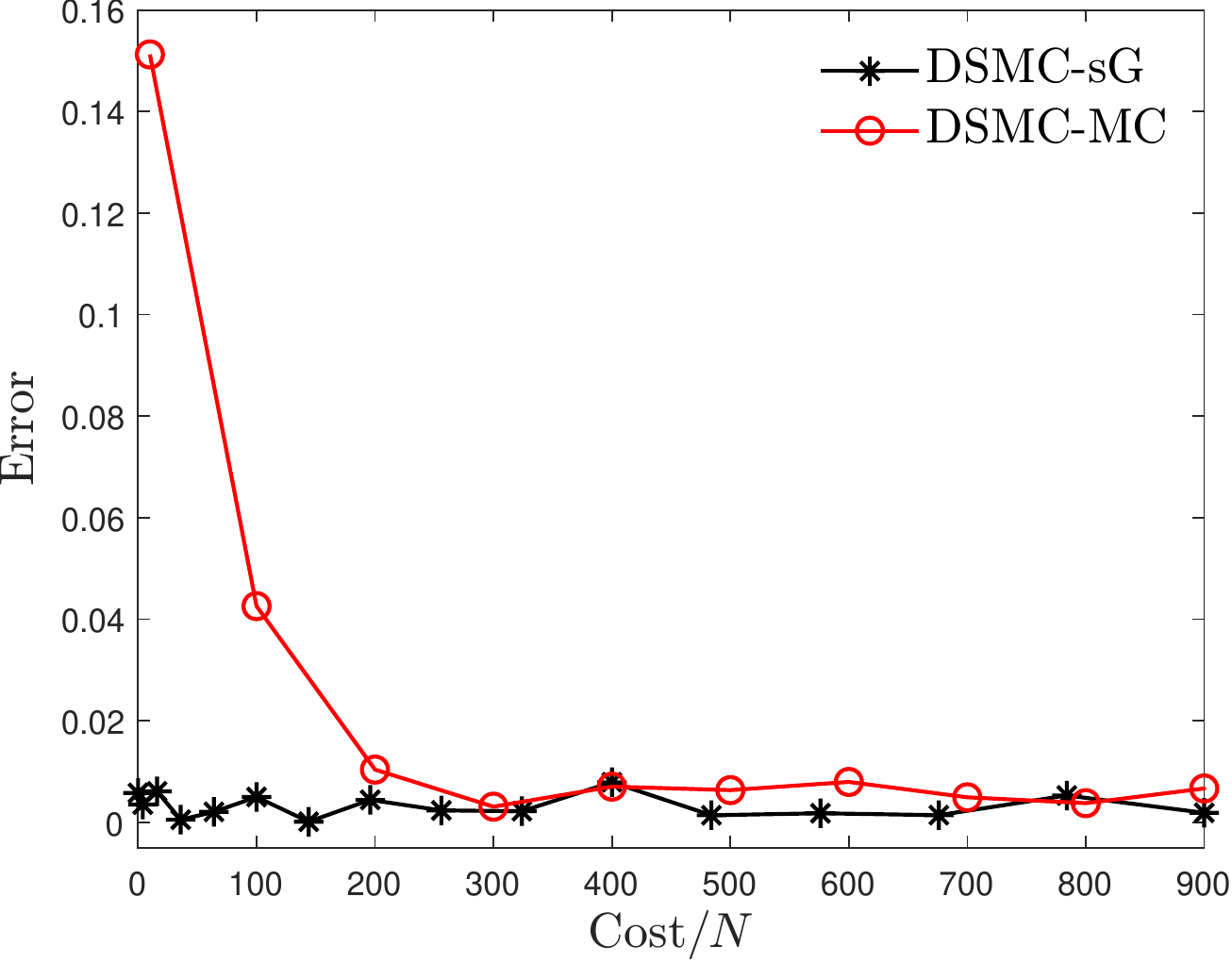}
	\caption{\small{\textbf{Test 5}. Comparison cost-Error between the DSMC-sG scheme and the standard DSMC with a Monte Carlo sampling of the random parameter (MC-MC). }}
	\label{fig:test_5_error_cost}
\end{figure}

\section*{Conclusions} 
In this work, we have investigated the design of efficient particle methods for the Landau-Fokker-Planck equation in the presence of uncertainties. The equation has significant implications for the development of fusion reactors and has been extensively utilized by researchers and engineers to gain insights into the behaviour of charged particles in plasma. To accurately predict the behaviour of these particles, it is crucial to account for uncertainties in the constitutive parameters that characterize the system behaviour.

Our approach combines collision algorithms inspired by the grazing limit of the Boltzmann equation with a stochastic Galerkin particle projection. This method leverages the general structure of kernels in collision algorithms and introduces a regularized kernel to ensure spectral accuracy in the random space. In addition, the method benefits from a more efficient collision strategy which avoids iterative algorithms and, by employing particle reconstruction, it preserves the nonnegativity of the solution and other essential physical properties.

To validate the effectiveness and accuracy of our approach, we have conducted various numerical tests, including classical benchmarks such as BKW, Trubnikov's solution for Coulombian particles, and the bump-on-tail problem. The results demonstrate the efficiency and accuracy of our method in capturing the complex dynamics described by the Landau equation with uncertain data.

In conclusion, we have developed a robust and accurate approach that can effectively capture the behaviour of charged particles in collisional plasmas under uncertain data. Future developments may involve extending the methodology to more complex scenarios and investigating additional applications in plasma physics in space non-homogeneous situations.

\section*{Acknowledgments}
This work has been written within the activities of GNCS and GNFM groups of INdAM (National Institute of High Mathematics) and is partially supported by ICSC – Centro Nazionale di Ricerca in High Performance Computing, Big Data and Quantum Computing, funded by European Union – NextGenerationEU. L.P. acknowledges the partial support of MIUR-PRIN Project No. 2017KKJP4X “Innovative numerical methods for evolutionary partial differential equations and applications”.
M.Z. acknowledges partial support of MUR-PRIN Project No. 2020JLWP23 “Integrated mathematical approaches to socio-epidemiological dynamics”. 

\appendix
\section{Trubnikov formula for Maxwell molecules} \label{sec:appendixA}
We follow^^>\cite{Trubnikov1965} (Section 20, pages 200--203) to derive the Trubnikov formula for the relaxation of anisotropic initial temperatures in a system characterized by Maxwell molecules. We consider equation \eqref{eq:LFP} with $\gamma=0$ written in flux form
\[
\frac{\partial f(v,t)}{\partial t} = \nabla_v \cdot J(f)(v,t),
\]
with initial conditions given by
\[
f_0(v)=\rho\left(\frac{1}{2\pi}\right)^{3/2}\frac{1}{T_\perp\sqrt{T_z}} e^{-\dfrac{v^2_\perp}{2T_\perp}} e^{-\dfrac{v^2_z}{2T_z}},
\]
where $T_\perp=T_x=T_y > T_z$ and $v^2_\perp=v^2_x+v^2_y$. Since the temperature $T=(2T_\perp+T_z)/3$ is constant in time, we have
\[
\frac{d}{dt} T_\perp = - \frac{d}{dt} \frac{T_z}{2} = - \frac{1}{\rho} \int_{\R^3} \frac{v^2_z}{2} \frac{\partial f}{\partial t} dv = \frac{1}{\rho} \int_{\R^3} v_z J_z dv,
\]
where $J_z$ is the $z$-component of the flux
\[
J_z = - \frac{1}{8} \int_{\R^3} \sum_{j=\{x,y,z\}} (|q|^2 \delta_{zj} - q_z q_j)\left(f(v)\partial_{v_{*j}} f(v_*) - f(v_*) \partial_{v_{j}} f(v) \right) dv_*.
\] 
Using the expression of the initial distribution, we can compute explicitly the sum inside the integral, which gives
\be \label{eq:dtperp}
\frac{d}{dt} T_\perp = - \frac{d}{dt} \frac{T_z}{2} = - \frac{1}{8 \rho} \frac{T_\perp - T_z}{T_\perp T_z} \int_{\R^3} \int_{\R^3} f(v) f(v_*) v_zq_z q^2_\perp dv_* dv.
\ee
In the Coulombian case, we have a multiplicative $1/q^3$ term inside the double integral, which forces us to make the assumption that $|T_\perp - T_z| \ll 1$, (see^^>\cite{Trubnikov1965}, Section 20, page 202, for further details). In the Maxwell case, we can compute the double integral explicitly, using again the expression of the initial distribution. Through the change of variables $(v,v_*)\to(V,q)$, where $V=(v+v_*)/2$ and $q=v-v_*$, we have
\[
\int_{\R^3} \int_{\R^3} f(v) f(v_*) v_zq_z q^2_\perp dv_* dv = 4 \rho^2 T_\perp T_z. 
\] 
Plugging the previous expression into \eqref{eq:dtperp} we get
\[
\frac{d}{dt} T_\perp = - \frac{d}{dt} \frac{T_z}{2} = - \frac{\rho}{2} \Delta T,
\]
where $\Delta T= T_\perp - T_z$. Hence, if we observe that
\[
\frac{d}{d t} \Delta T = 3 \frac{d}{d t} T_\perp
\]
we finally get
\[
\Delta T (t) = \Delta T(0) e^{-t/\tau}, \quad \textrm{with} \quad \tau = \frac{2}{3 \rho}.
\]
We remark that this expression holds independently of the magnitude of $|T_\perp - T_z|$. Furthermore, the rate inside the exponential function does not depend on the temperature $T$ as in the Coulombian case.

\bibliographystyle{abbrv}
\bibliography{UQ_Plasmi.bib}

\end{document}